\long\def\symbolfootnote[#1]#2{\begingroup%
\def\thefootnote{\fnsymbol{footnote}}\footnote[#1]{#2}\endgroup}
\def\ds{\rule{0pt}{1.5ex}}
\newtheorem{theorem}{Theorem}[section]
\newtheorem{lemma}[theorem]{Lemma}
\newtheorem{lem}[theorem]{Lemma}
\newtheorem{thm}[theorem]{Theorem}
\newtheorem{cri}[theorem]{Criterion}
\newtheorem{prop}[theorem]{Proposition}
\newtheorem{cor}[theorem]{Corollary}
\theoremstyle{definition}
\newtheorem{rem}[theorem]{Remark}
\newtheorem{defin}[theorem]{Definition}
\newtheorem{constr}[theorem]{Construction}
\newcommand{\R}{\mathbb{R}}
\newcommand{\Z}{\mathbb{Z}}
\renewcommand{\H}{\mathbb{H}}
\newcommand{\p}{\textup{\textsf{p}}}
\newcommand{\hb}{\textsf{h}}
\newcommand{\gb}{\textsf{g}}
\newcommand{\homology}{\ensuremath{{\sf{H}}}}
\newcommand{\systole}[1]{\ensuremath{\vert\!\vert #1 \vert\!\vert}}
\newcommand{\nclose}[1]{\ensuremath{\langle\!\langle#1\rangle\!\rangle}}
\begin{document}

\title{Mixed $3$--manifolds are virtually special}
\author[P.~Przytycki]{Piotr Przytycki$^\dag$}
\address{Inst. of Math., Polish Academy of Sciences\\
 \'Sniadeckich 8, 00-956 Warsaw, Poland}
\email{pprzytyc@mimuw.edu.pl}
\thanks{$\dag$ Partially supported by MNiSW grant N201 012 32/0718, the Foundation for Polish Science, and National Science Centre DEC-2012/06/A/ST1/00259.}
\author[D.~T.~Wise]{Daniel T. Wise$^\ddag$}
           \address{Math. \& Stats.\\
                    McGill University \\
                    Montreal, Quebec, Canada H3A 2K6 }
           \email{wise@math.mcgill.ca}
\thanks{$\ddag$ Supported by NSERC}

\maketitle

\begin{abstract}
\noindent
Let $M$ be a compact oriented irreducible $3$--manifold which is neither a graph manifold nor a hyperbolic manifold. We prove that $\pi_1M$ is virtually special.
\end{abstract}

\section{Introduction}
A compact connected oriented irreducible $3$--manifold with arbitrary, possibly empty boundary is \emph{mixed} if it is not hyperbolic and not a graph manifold.
A group is \emph{special} if it is a subgroup of a right-angled Artin group. Our main result is the following.

\begin{thm}
\label{thm:main}
Let $M$ be a mixed $3$--manifold. Then $\pi_1M$ is virtually special.
\end{thm}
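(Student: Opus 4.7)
The plan is to cubulate $\pi_1 M$ by a suitable collection of immersed two--sided incompressible surfaces and then invoke Agol's theorem together with a combination/specialness argument along the JSJ decomposition. Let $\mathcal{T} \subset M$ denote the JSJ tori, which split $M$ into hyperbolic pieces $M_h^i$ and Seifert fibered pieces $M_s^j$; because $M$ is mixed, at least one $M_h^i$ exists and at least one $\mathcal{T}$--torus is present.

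First I would construct a large family of quasi--convex immersed surfaces in $M$. In each hyperbolic piece, enough closed quasi--Fuchsian surfaces are available by Kahn--Markovic, and Bergeron--Wise cubulate $\pi_1 M_h^i$ as a cocompact action on a CAT(0) cube complex. In each Seifert piece, horizontal surfaces and vertical tori/annuli give a cocompact cubulation of $\pi_1 M_s^j$ (which is virtually a surface times $\R$, up to finite order). The key refinement is to arrange that the boundary curves of these surfaces on each JSJ torus can be matched across $\mathcal{T}$, so that pairs of surfaces in adjacent pieces glue to an immersed surface in $M$ whose fundamental group is relatively quasi--convex in $\pi_1 M$ (relative to the peripheral $\Z^2$'s). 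A density/genericity argument on slopes on each torus, together with virtual retraction in the pieces (using that each piece is virtually special by Agol and by the graph--manifold case), should produce enough matched pairs to separate all pairs of distinct points in the Bowditch boundary of $\pi_1 M$ (relatively hyperbolic with respect to the Seifert/torus peripheral subgroups).

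Having produced a sufficiently rich family $\mathcal{S}$ of immersed surfaces in $M$, I would apply the Sageev construction to the codimension--$1$ subgroups $\pi_1 S$, $S \in \mathcal{S}$, obtaining an action of $\pi_1 M$ on a CAT(0) cube complex $X$. Properness follows from the separation property of $\mathcal{S}$, while cocompactness is the main technical point: one must show only finitely many $\pi_1 M$--orbits of hyperplanes arise, using Hruska--Wise finiteness for relatively hyperbolic cubulations once the surfaces have ``enough wallspace'' on each peripheral torus. From here, Agol's theorem applies to the hyperbolic pieces; on the Seifert pieces the cubulation is again special by the graph--manifold results of Liu and of Przytycki--Wise; and a Haglund--Wise--style combination along JSJ tori, together with the fact that edge groups $\Z^2$ lie in the intersection of finitely many hyperplane stabilisers, should upgrade the pieces to a single finite special cover of $X / \pi_1 M$.

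The principal obstacle, I expect, is not the cubulation in each piece, which is essentially available, but rather the gluing of surfaces across the JSJ tori in a way that simultaneously (i) produces immersed surfaces with relatively quasi--convex fundamental groups in $\pi_1 M$, (ii) yields enough walls to achieve cocompactness of the Sageev cube complex, and (iii) is compatible with a single finite regular cover that makes the resulting cube complex special. Carrying out (i)--(iii) appears to require a careful virtual retraction argument in each JSJ piece to promote matched slopes on each torus to genuinely embedded wall pieces after passing to a finite cover, and this bookkeeping across the JSJ graph is the delicate part.
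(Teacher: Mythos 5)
Your high-level outline (cubulate by immersed incompressible surfaces glued across the JSJ, then specialise by working piecewise) matches the paper's strategy, but several of your key ingredients are either wrong or seriously incomplete, and they hide the actual technical content. First, Kahn--Markovic gives \emph{closed} quasi-Fuchsian surfaces, which cannot be matched with pieces in adjacent blocks across the JSJ tori. What the paper uses instead is a family of \emph{geometrically finite immersed surfaces with boundary on the cusp tori}, together with control over which boundary slopes appear, coming from Wise's quasiconvex hierarchy results for cusped hyperbolic $3$--manifolds (cf.\ Theorem~\ref{thm:hyperbolic->cubul specially by surfaces} and Proposition~\ref{prop:existence_hyperbolic}). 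Producing surfaces with \emph{designated} boundary curves (Lemma~\ref{cor:epimorphism to Z} $\Rightarrow$ Corollary~\ref{cor:epimorphism for manifolds} $\Rightarrow$ Proposition~\ref{prop:existence_hyperbolic}) is not a ``density of slopes'' formality: it needs the virtual specialness of the cusped pieces as an input to manufacture the right cohomology classes. Your proposal calls this ``the delicate part'' without offering a mechanism, and Kahn--Markovic is simply the wrong tool.

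Second, you should not expect the Sageev cubulation to be cocompact: $\pi_1 M$ is hyperbolic relative to the graph manifold block groups, and those contain $\Z^2$'s, so the cubulation is only \emph{relatively} cocompact (cosparse) in the sense of Hruska--Wise. Consequently, citing ``Agol's theorem'' for the hyperbolic pieces and a ``Haglund--Wise-style combination'' along the JSJ tori is not a proof: the edge groups are $\Z^2$ (not hyperbolic), the relevant cube complexes and hyperplane carriers are non-compact, and the compact-case separability and canonical-completion machinery does not apply off the shelf. The paper's actual route is to apply Wise's Special Quotient Theorem (Theorem~\ref{thm:relative special qoutient}) to kill deep finite-index subgroups of the peripheral $\Z^2$'s, obtaining a virtually compact special \emph{hyperbolic} quotient, and then to verify the Haglund--Wise criterion (Criterion~\ref{thm:HWcriterion}) by running cubical small cancellation (Theorem~\ref{thm:ladder}) in that quotient. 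To do this one first has to extend hyperplane separability and double-coset separability from compact to sparse/non-compact special cube complexes (Section~\ref{sec:separability}), which is a real extra step your plan does not acknowledge. So while the two-step shape of your argument is right, the ``Kahn--Markovic plus cocompactness plus Agol plus combination'' chain would not go through as stated.
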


\begin{cor}
\label{cor:linear}
The fundamental group of a mixed $3$--manifold is linear over $\Z$.
\end{cor}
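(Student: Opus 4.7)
The plan is to chain two standard inputs with the main theorem. By Theorem~\ref{thm:main}, the fundamental group $G = \pi_1 M$ contains a finite-index special subgroup $H$, which by definition embeds into some right-angled Artin group $A_\Gamma$. The first step is to invoke the well-known fact that every right-angled Artin group admits a faithful representation into $GL_n(\Z)$ for some $n$, for instance via direct construction of integer matrices from the defining graph $\Gamma$. Composing $H \hookrightarrow A_\Gamma \hookrightarrow GL_n(\Z)$ yields a faithful integral representation $\rho$ of $H$.

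The second step is to promote linearity of $H$ to linearity of the ambient group $G$. First I would pass to the normal core $N = \bigcap_{g \in G} gHg^{-1}$, which is a finite-index normal subgroup of $G$ and inherits $\rho$ as a faithful representation into $GL_n(\Z)$. Setting $m = [G:N]$, I would form the induced representation
\[
\tilde\rho = \operatorname{Ind}_N^G \rho \colon G \longrightarrow GL_{nm}(\Z),
\]
realised on the finite free $\Z$-module $\Z[G] \otimes_{\Z[N]} \Z^n$. Checking faithfulness is routine: any $g \in \ker \tilde\rho$ must preserve every $N$-coset summand, forcing $g \in N$, and must act as the identity on the summand indexed by $N$ itself, which by faithfulness of $\rho$ on $N$ forces $g = 1$. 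Hence $\tilde\rho$ is injective and $G$ embeds into $GL_{nm}(\Z)$.

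I do not expect any real obstacle here: both the integral linearity of right-angled Artin groups and the induced-representation trick that upgrades linearity from a finite-index normal subgroup to the ambient group are classical. The only discretionary point is the choice of reference for integral linearity of right-angled Artin groups; no further geometric input beyond Theorem~\ref{thm:main} is needed.
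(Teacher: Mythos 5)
Your proposal is correct and is exactly the standard argument that the paper leaves implicit: the corollary is stated without proof as an immediate consequence of Theorem~\ref{thm:main} together with integral linearity of right-angled Artin groups (cf.\ the references \cite{HsuW,DJ} cited in the introduction) and the induced-representation trick. One small remark: passing to the normal core $N$ is harmless but unnecessary --- the induced representation $\operatorname{Ind}_H^G\rho$ from \emph{any} finite-index subgroup $H$ is already faithful when $\rho$ is, by the same coset-preservation argument you give --- so the proof works directly with the special subgroup $H$.
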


As explained below, Theorem~\ref{thm:main} has the following consequence.

\begin{cor}
\label{cor:fiber}
A mixed $3$--manifold with possibly empty toroidal boundary virtually fibers.
\end{cor}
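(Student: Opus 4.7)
The plan is to deduce Corollary \ref{cor:fiber} from Theorem \ref{thm:main} by invoking Agol's virtual fibering criterion. Recall that this criterion asserts that a compact orientable irreducible $3$--manifold with empty or toroidal boundary virtually fibers over the circle provided its fundamental group is virtually RFRS (residually finite rationally solvable).

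The bridge between Theorem \ref{thm:main} and Agol's hypothesis is supplied by two ingredients that I would quote. First, every right-angled Artin group is RFRS, via the canonical filtration obtained by iterating the kernel of the map to the torsion-free quotient of the abelianization modulo squares; this is due to Agol. Second, the RFRS property passes to subgroups. Combining these, every special group is RFRS, and hence every virtually special group is virtually RFRS.

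With these facts in hand, the deduction becomes almost automatic. Since $M$ is mixed, it is compact, oriented, and irreducible by definition, and it has toroidal boundary by hypothesis; the same properties are inherited by any finite cover. Theorem \ref{thm:main} provides a finite-index subgroup of $\pi_1 M$ that embeds in a right-angled Artin group, hence is RFRS, so $\pi_1 M$ is virtually RFRS. Applying Agol's criterion to the corresponding finite cover yields the desired fibration over $S^1$.

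The hard part of the corollary is really Theorem \ref{thm:main} itself; no genuine obstacle arises in the deduction. The only point to be careful about is matching the hypotheses of Agol's theorem, in particular restricting to toroidal (possibly empty) boundary, which is exactly why the corollary is stated in that generality rather than for arbitrary mixed manifolds.
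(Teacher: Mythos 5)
Your deduction is correct and is essentially the paper's own argument: Theorem~\ref{thm:main} gives a finite-index subgroup of $\pi_1M$ embedding in a right-angled Artin group, Agol's work then gives that this subgroup is RFRS, and Agol's virtual fibering criterion (\cite[Thm 5.1]{A}) applies since any finite cover of a mixed manifold with toroidal boundary is again compact, connected, oriented, irreducible, with toroidal boundary. The only cosmetic difference is that the paper passes from right-angled Artin groups to right-angled Coxeter groups (via \cite{HsuW,DJ}) before invoking Agol's \cite[Thm 2.2]{A}, which is literally stated for right-angled Coxeter groups, whereas you state directly that right-angled Artin groups are RFRS; both routes are standard and equivalent.
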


An alternative definition of a special group
is the following. A nonpositively curved cube complex $X$ is \emph{special} if its immersed hyperplanes do not self-intersect, are two-sided, do not directly self-osculate or interosculate (see Definition~\ref{def:special}).
A group $G$ is (\emph{compact}) \emph{special} if it is the fundamental group of a (compact) special cube complex $X$. Then $G$ is a subgroup of a possibly infinitely generated right-angled Artin group \cite[Thm 4.2]{HW}. Conversely, a subgroup $G$ of a right-angled Artin group is the fundamental group of the corresponding cover $X$ of the Salvetti complex, which is special. Note that if the fundamental group $G$ of a special cube complex $X$ is finitely generated, then a minimal locally convex subcomplex $X'\subset X$ containing a $\pi_1$--surjective finite graph in $X^1$ has finitely many hyperplanes and is special, so that $G$ embeds in a finitely generated right-angled Artin group.

Special groups are residually finite. Moreover, assuming that $X$ has finitely many hyperplanes, the stabilizer in $G$ of any hyperplane in the universal cover $\widetilde{X}$ of $X$ is separable (see Corollary~\ref{cor:special->separable}). For $3$--manifold groups, separability of a subgroup corresponding to an immersed incompressible surface implies that in some finite cover of the manifold the surface lifts to an embedding. There are immersed incompressible surfaces in graph manifolds that do not lift to embeddings in a finite cover \cite{RW}.

There are a variety of groups with the property that every finitely generated subgroup is separable --- for instance, this was shown for free groups by M.\ Hall and for surface groups by Scott. A compact $3$--manifold is \emph{hyperbolic} if its interior is homeomorphic to a quotient of $\H^3$ (equivalently to the quotient of the interior of the convex hull of the limit set) by a geometrically finite Kleinian group. It was recently proved that hyperbolic $3$--manifolds with an embedded geometrically finite incompressible surface have fundamental groups that are virtually compact special \cite[Thm 14.29]{Hier}.
This implies separability for all geometrically finite subgroups \cite[Thm 16.23]{Hier}. By Tameness \cite{Atam,CG} and Covering \cite{Thu,Can} Theorems all other finitely generated subgroups correspond to virtual fibers and hence they are separable as well.
Very recently, Agol, Groves and Manning \cite[Thm 1.1]{Ahak}
building on \cite{Hier} proved that the fundamental group of every closed hyperbolic $3$--manifold is virtually compact special and hence all its finitely generated subgroups are separable. For more details, see the survey article \cite{AFW}.

Another striking consequence of virtual specialness is virtual fibering. Since special groups are subgroups of right-angled Artin groups, they are subgroups of right-angled Coxeter groups as well \cite{HsuW,DJ}. Agol proved that such groups are virtually residually finite rationally solvable (RFRS) \cite[Thm 2.2]{A}. Then he proved that if the fundamental group of a compact connected oriented irreducible $3$--manifold with toroidal boundary is RFRS, then it virtually fibers \cite[Thm 5.1]{A}. In view of these results, every hyperbolic manifold with toroidal boundary virtually fibers \cite[Thm 9.2]{Ahak}. Similarly, our Theorem~\ref{thm:main} yields Corollary~\ref{cor:fiber}.

Liu proved that an aspherical graph manifold has virtually special fundamental group if and only if it admits a nonpositively curved Riemannian metric \cite[Thm 1.1]{Liu}. Independently, and with an eye
towards the results presented here, we proved virtual specialness for graph manifolds with nonempty boundary \cite[Cor 1.3]{PW}. Note that graph manifolds with nonempty boundary carry a nonpositively curved metric by \cite[Thm 3.2]{Leeb}. Our Theorem~\ref{thm:main} thus resolves the question of virtual specialness for arbitrary compact $3$--manifold groups.

\begin{cor}
\label{cor:Liu}
A compact aspherical $3$--manifold has virtually special fundamental group if and only if it admits a Riemannian metric of nonpositive curvature.
\end{cor}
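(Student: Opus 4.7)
The plan is to apply geometrization to split the argument into cases. A compact aspherical $3$--manifold $M$ falls into one of four classes: (a) hyperbolic; (b) mixed; (c) a graph manifold that is not Seifert fibered; or (d) Seifert fibered. In each class I verify both directions of the biconditional.

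Cases (a)--(c) follow quickly from results cited in the introduction. For (a), the hyperbolic metric is nonpositively curved, and virtual specialness of $\pi_1M$ follows by combining \cite{Hier} with \cite{Ahak}. For (b), Theorem~\ref{thm:main} provides virtual specialness, while \cite[Thm 3.2]{Leeb} provides a nonpositively curved Riemannian metric. Case (c) is Liu's theorem \cite[Thm 1.1]{Liu} verbatim.

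In case (d), I split according to the Thurston geometry of $M$. If $M$ is modeled on $\mathbb{E}^3$ or $\mathbb{H}^2 \times \mathbb{R}$, then some finite cover of $M$ is a flat $3$--torus or a product $\Sigma \times S^1$ with $\Sigma$ a closed hyperbolic surface, so $M$ admits a nonpositively curved metric and $\pi_1M$ is virtually $\mathbb{Z}^3$ or virtually $\pi_1(\Sigma) \times \mathbb{Z}$; both are evidently subgroups of right-angled Artin groups. If instead $M$ is modeled on $\mathrm{Nil}$ or $\widetilde{SL_2(\mathbb{R})}$, then the Seifert Euler number is nonzero, and $\pi_1M$ contains a $\mathbb{Z}^2$ subgroup whose distortion function is quadratic or exponential respectively. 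Such distortion simultaneously obstructs the existence of a nonpositively curved Riemannian metric (by the flat torus theorem, any abelian subgroup of a CAT(0) group acts by semisimple isometries on a flat subspace and is therefore undistorted) and obstructs virtual specialness (abelian subgroups of right-angled Artin groups are undistorted).

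The main obstacle is case (d): the other cases each reduce to a single invocation of an existing result, whereas (d) requires a geometry-by-geometry discussion together with the distortion arguments above. A secondary point to verify is that compact aspherical $3$--manifolds with (necessarily incompressible) boundary fit into the same framework, which is routine once the geometric decomposition is invoked and the Seifert fibered pieces with boundary are recognised as admitting nonpositively curved metrics and virtually special fundamental groups via \cite{PW}.
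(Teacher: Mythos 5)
Your case decomposition agrees with the paper's for (a)--(c): the paper proves Corollary~\ref{cor:Liu} by combining Liu's theorem for graph manifolds, \cite{Hier} together with \cite{Ahak} for hyperbolic manifolds, and Theorem~\ref{thm:main} with Leeb/Bridson for mixed manifolds. (A small slip: for the mixed case the nonpositively curved metric comes from \cite[Thm 3.3]{Leeb} or \cite[Thm 4.3]{Bri}, not \cite[Thm 3.2]{Leeb}, which is the boundary graph manifold statement.) Where you diverge is that the paper reads Liu's result as resolving the conjecture for \emph{all} graph manifolds, Seifert fibered ones included, so it never argues case (d) from scratch, whereas you supply a geometry-by-geometry argument. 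That is a reasonable thing to attempt, but as written case (d) has a genuine gap.

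The problem is the $\widetilde{SL_2(\mathbb{R})}$ case. You claim $\pi_1M$ contains an exponentially distorted $\mathbb{Z}^2$, but this is false. For a closed Seifert fibered $M$ with $\widetilde{SL_2(\mathbb{R})}$ geometry the base orbifold $\Sigma$ is hyperbolic, so every $\mathbb{Z}^2$ in $\pi_1M$ contains a finite index subgroup of the central fiber $\mathbb{Z}$. By the linear isoperimetric inequality in $\mathbb{H}^2$ --- equivalently, because every class in $H^2(\pi_1^{\mathrm{orb}}\Sigma;\mathbb{Z})$ is represented by a bounded cocycle, so the central extension has undistorted center by Gersten's criterion --- this $\mathbb{Z}$ and hence every such $\mathbb{Z}^2$ is quasi-isometrically embedded. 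Quadratic distortion of the center is genuinely a Nil phenomenon, where the base is Euclidean; your Nil argument is fine, but the $\widetilde{SL_2}$ half of the sentence does not survive. To make case (d) work you would need a different obstruction for $\widetilde{SL_2}$: for the CAT(0) direction, the Splitting Theorem forces an infinite central $\mathbb{Z}$ of a CAT(0) group to be a virtual direct factor, which contradicts the nonvanishing Euler number; and for the virtual specialness direction you need a separate argument (e.g.\ via the RFRS property or by citing Liu directly). Since the paper simply cites Liu for the whole graph manifold family, it does not encounter this issue.
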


Corollary~\ref{cor:Liu} was conjectured by Liu \cite[Conj 1.3]{Liu}. As discussed above, he proved the conjecture for graph manifolds while for hyperbolic manifolds this follows from \cite{Hier} and \cite{Ahak}. All mixed manifolds admit a metric of nonpositive curvature, essentially due to \cite[Thm 3.3]{Leeb}, as showed in \cite[Thm 4.3]{Bri}. Hence Theorem~\ref{thm:main} resolves Liu's conjecture in the remaining mixed case.
However, the equivalence in Corollary~\ref{cor:Liu} appears to be more circumstantial than a consequence of an intrinsic relationship between nonpositive curvature and virtual specialness: all manifolds in question except for certain particular closed graph manifolds have both of these features.

As a consequence of virtual specialness of mixed manifolds (Theorem~\ref{thm:main}), hyperbolic manifolds with nonempty boundary \cite[Thm 14.29]{Hier}, and graph manifolds with nonempty boundary \cite[Cor 1.3]{PW} we have the following.

\begin{cor}
The fundamental group of any knot complement in $S^3$ has a faithful representation in $\mathrm{SL}(n,\Z)$ for some $n$.
\end{cor}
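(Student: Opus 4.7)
The plan is to combine virtual specialness in the three possible geometric regimes for a knot complement with a standard induced-representation argument to promote virtual $\Z$-linearity to genuine $\Z$-linearity, exploiting the fact that knot groups are torsion-free.

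First, I would observe that the exterior of a knot $K \subset S^3$ is a compact orientable irreducible $3$-manifold $M$ with a single torus boundary component. Geometrization places $M$ into exactly one of three classes: a graph manifold with nonempty boundary, a hyperbolic manifold with toroidal boundary, or a mixed $3$-manifold. In each case $\pi_1 M$ is virtually special, by \cite[Cor 1.3]{PW}, \cite[Thm 14.29]{Hier}, and Theorem~\ref{thm:main} of this paper respectively.

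Next, I would choose a finite-index special subgroup $H \le \pi_1 M$ and replace it by its normal core $H_0$, which is still of finite index and still special (since subgroups of right-angled Artin groups are right-angled Artin subgroups, hence special). Since right-angled Artin groups admit faithful integral representations, we obtain a faithful homomorphism $\rho \colon H_0 \to GL_n(\Z)$ for some $n$. I would then induce $\rho$ up to $\pi_1 M$ to obtain $\tilde{\rho} \colon \pi_1 M \to GL_{nk}(\Z)$, where $k = [\pi_1 M : H_0]$. The restriction $\tilde{\rho}\big|_{H_0}$ decomposes as a direct sum of the $k$ conjugates of $\rho$, each faithful, so $\ker(\tilde{\rho}) \cap H_0 = 1$ and $\ker(\tilde{\rho})$ is therefore a finite normal subgroup of $\pi_1 M$. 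Since the knot exterior is aspherical, $\pi_1 M$ is torsion-free, and hence $\ker(\tilde{\rho}) = 1$. Composing with the standard embedding $GL_{nk}(\Z) \hookrightarrow \mathrm{SL}(nk+1, \Z)$ given by $A \mapsto \mathrm{diag}(A, (\det A)^{-1})$ yields the required faithful representation.

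The only step requiring genuine care is the last one, where virtual $\Z$-linearity must be upgraded to $\Z$-linearity of the full knot group; torsion-freeness is precisely the input that makes the induction-of-representations argument produce a faithful, not merely almost-faithful, representation. The rest of the argument is bookkeeping across the three geometric cases provided by the cited virtual-specialness theorems.
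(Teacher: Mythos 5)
Your proof is correct and fills in the standard argument that the paper leaves implicit: the corollary is stated without proof, as a direct consequence of the three cited virtual-specialness results, and your chain (trichotomy by geometric type, virtual specialness, $\Z$-linearity of RAAGs, induction of representations, torsion-freeness to kill the finite kernel, $GL_n(\Z)\hookrightarrow SL_{n+1}(\Z)$) is exactly what the authors intend. Two minor remarks. The parenthetical claim that subgroups of right-angled Artin groups are themselves right-angled Artin groups is false (Bestvina--Brady groups are the standard counterexamples); this does not affect your argument, since the paper's definition of \emph{special} is precisely \emph{subgroup of a right-angled Artin group}, a property trivially inherited by subgroups. Also, passing to the normal core, while harmless and tidy, is not strictly necessary: for an arbitrary finite-index subgroup $H\le\pi_1M$, the restriction of $\mathrm{Ind}_H^{\pi_1M}\rho$ to $H$ still contains $\rho$ as a direct summand corresponding to the identity coset, so $\ker\tilde\rho\cap H=1$ holds without normality and the kernel is already finite.
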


Note that existence of a non-abelian representation of any nontrivial knot complement group into $\mathrm{SU}(2)$ is a well-known result of Kronheimer--Mrowka \cite{KMr}.

\medskip

\noindent \textbf{Organization.}
As explained in Section~\ref{sec:div}, the proof of Theorem~\ref{thm:main} is divided into two steps.
The first step is Theorem~\ref{thm:cubulation} (Cubulation), which roughly states that in any mixed manifold there is a collection of surfaces sufficient for cubulation. In Section~\ref{sec:graph} we review the construction of surfaces in graph manifolds with boundary. We discuss surfaces in hyperbolic manifolds with boundary in Section~\ref{sec:hyperbolic}. We prove Theorem~\ref{thm:cubulation} in Section~\ref{sec:cubul} by combining the surfaces from graph manifold blocks and hyperbolic blocks.

The second step is Theorem~\ref{thm:specialization} (Specialization), which provides the virtual specialness of the nonpositively curved cube complex produced in the first step. In Section~\ref{sec:separability} we extend some separability results for special cube complexes to non-compact setting.
We apply them in Section~\ref{sec:small_cancellation} to obtain cubical small cancellation results for non-compact special cube complexes. This allows us to prove Theorem~\ref{thm:specialization} in Section~\ref{sec:special}.

\medskip

\noindent \textbf{Ingredients in the proof of Theorem~\ref{thm:main}:}

\begin{itemize}
\item
Canonical completion and retraction (Theorem~\ref{thm:canonical_completion_and_retraction}) for special cube complexes~\cite{HW2}.
\item
Criterion~\ref{thm:HWcriterion} for virtual specialness~\cite{HW2}.
\item
Gitik--Minasyan double
quasiconvex coset separability~\cite{Min}.
\item
Criterion for relative quasiconvexity~\cite{BW}.
\item
Combination Theorem~\ref{thm:Eduardo} for relatively quasiconvex groups~\cite{MP}.
\item
Relative cocompactness of cubulations of relatively hyperbolic groups~\cite{HruW}.
\item
Criterion~\ref{thm:WallNbd} for WallNbd-WallNbd Separation \cite{HruW}.
\item
Proposition~\ref{prop:existence_graph} which constructs virtually embedded surfaces in graph manifolds with boundary~\cite{PW}.
\item
Separability and double separability of embedded surfaces in graph manifolds~\cite{PW}.
\item
Special Quotient Theorem~\ref{thm:relative special qoutient} for groups hyperbolic relative to free-abelian subgroups~\cite{Hier}.
\item
Theorem~\ref{thm:hyperbolic->special} on virtual specialness of hyperbolic manifolds with nonempty boundary~\cite{Hier}.
\item
Main Theorem~\ref{thm:ladder} of cubical small cancellation~\cite{Hier}.
\end{itemize}

\noindent
\textbf{Acknowledgement.} We thank Stefan Friedl for his remarks and corrections. We also thank the referee for detailed comments that helped us clarify the proof.

\section{Technical reduction to two steps}
\label{sec:div}

Let $M$ be a compact connected oriented irreducible $3$--manifold. By passing to a double cover, we can also assume that $M$ has no $\pi_1$--injective Klein bottles. Moreover, assume that $M$ is not a Sol or Nil manifold. Up to isotopy, $M$ then has a unique minimal collection of incompressible tori not parallel to $\partial M$, called \emph{JSJ tori}, such that the complementary components called \emph{blocks} are either algebraically atoroidal or else Seifert fibered \cite[Thm 3.4]{Bon}. We say that $M$ is \emph{mixed} if it has at least one JSJ torus and one atoroidal block. (Equivalently, by Perelman's geometrization, $M$ is not hyperbolic and not a graph manifold.) By Thurston's hyperbolization all atoroidal blocks are hyperbolic and we will denote them by $M^{\hb}_k$. The JSJ tori adjacent to at least one hyperbolic block are \emph{transitional}. The complementary components of the union of the hyperbolic blocks are graph manifolds with boundary and will be called \emph{graph manifold blocks} and denoted by $M^{\gb}_i$. Up to a diffeomorphism isotopic to the identity, each of their Seifert fibered blocks admits a unique Seifert fibration that we fix. If a transitional torus is adjacent on both of its sides to hyperbolic blocks, we replace it by two parallel tori (also called JSJ, and transitional) and add the product region $T\times I$ bounded by them as a graph manifold block to the family $\{M^{\gb}_i\}$. Similarly, for a boundary torus of $M$ adjacent to a hyperbolic block, we introduce its parallel copy in $M$ (called JSJ, and transitional) and add the product region to $\{M^{\gb}_i\}$.
These $M^{\gb}_i=T\times I$ will be called \emph{thin}. We will later fix one of many Seifert fibrations on thin $M^{\gb}_i$.

Unless stated otherwise all surfaces are embedded or immersed \emph{properly}.
Let $S\rightarrow M$ be an immersed surface in a $3$--manifold. Let $\widehat{M}\rightarrow M$ be a covering map. A map $\widehat{S}\rightarrow \widehat{M}$ that covers $S\rightarrow M$ and does not factor through another such map is its \emph{elevation} (it is a \emph{lift} when $\widehat{S}=S$). A connected oriented surface $S\rightarrow M$ that is not a sphere is \emph{immersed incompressible} if it is $\pi_1$--injective and its elevation to the universal cover $\widetilde{M}$ of $M$ is an embedding. The surface $S$ is \emph{virtually embedded} if there is a finite cover $\widehat{M}$ of $M$ with an embedded elevation of $S$. Given a block $B$ and an immersed surface $\phi\colon S\rightarrow M$, a \emph{piece} of $S$ in $B$ is the restriction of $\phi$ to a component of $S\cap \phi^{-1}(B)$.

The elevations of JSJ tori, boundary tori, and transitional tori of $M$ to the universal cover $\widetilde{M}$ are called \emph{JSJ planes}, \emph{boundary planes}, and \emph{transitional planes}, and we keep the name \emph{blocks} (\emph{hyperbolic, graph manifold}, or \emph{Seifert fibered}) for the elevations of blocks of $M$.
We warn that this terminology refers to ``graph manifold blocks'' in $\widetilde{M}$ even though they are not compact.
Having specified a block $\widetilde{M}_o$ of $\widetilde{M}$ and a surface $\widetilde{S}_o\subset\widetilde{M}_o$ we denote by $\mathcal T(\widetilde{S}_o)$ the set of JSJ and boundary planes in $\partial \widetilde{M}_o$ intersecting $\widetilde{S}_o$.

An \emph{axis} for an element $g\in \pi_1M$ acting on $\widetilde{M}$ is a copy of $\R$ in $\widetilde{M}$ on which $g$ acts by nontrivial translation.
A \emph{cut-surface} for $g\in \pi_1M$ is an immersed incompressible surface $S\rightarrow M$ covered by $\widetilde{S}\subset \widetilde{M}$ such that
there is an axis $\R$ for $g$ satisfying $\widetilde{S}\cap \R=\{0\}$, where the intersection is transverse.

\begin{thm}[Cubulation]
\label{thm:cubulation}
Let $M$ be a mixed $3$--manifold. There is a finite
family of immersed incompressible surfaces $\mathcal{S}$ in $M$, in general position, and such that:
\begin{enumerate}
\item
For each element of $\pi_1M$ there is a cut-surface in $\mathcal{S}$.
\item
All JSJ tori belong to $\mathcal{S}$.
\item
Each piece of $S$ in $M^{\gb}_i$ is virtually embedded in $M^{\gb}_i$ for each $S\in\mathcal{S}$.
\item
Each piece of $S$ in $M^{\hb}_k$ is geometrically finite for each $S\in\mathcal{S}$.
\item
The family $\mathcal{S}$ satisfies the following Strong Separation property.
\end{enumerate}
\end{thm}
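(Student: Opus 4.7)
The plan is to construct $\mathcal{S}$ by producing compatible pieces inside each block and then assembling them into immersed surfaces in $M$ by gluing along transitional tori. I would proceed as follows.

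First, I put every JSJ torus and boundary torus of $M$ into $\mathcal{S}$, immediately giving property~(2). Next, for each graph manifold block $M^g_i$, I invoke Proposition~\ref{prop:existence_graph} to obtain a finite collection $\mathcal{S}^g_i$ of virtually embedded surfaces in $M^g_i$ that supply a cut-surface for every nonperipheral element of $\pi_1 M^g_i$. Analogously, for each hyperbolic block $M^h_k$, I use Theorem~\ref{thm:hyperbolic->cubul specially by surfaces} together with Lemma~\ref{cor:epimorphism to Z} to produce a finite family $\mathcal{S}^h_k$ of geometrically finite immersed incompressible surfaces cubulating $\pi_1 M^h_k$ relative to its peripheral subgroups. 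At this stage properties (3) and (4) are built in block by block.

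The central technical step is to merge these block-level surfaces into properly immersed surfaces in $M$. Each piece in some $\mathcal{S}^g_i$ or $\mathcal{S}^h_k$ carries a pattern of boundary slopes on each transitional torus $T$, and the slopes coming from the two sides of $T$ generally do not match. My plan is to rectify this by passing to suitable finite covers of the blocks: on the hyperbolic side, I apply the canonical completion and retraction Theorem~\ref{thm:canonical_completion_and_retraction} to produce, for any prescribed multicurve on $\partial M^h_k$, a finite cover in which the multicurve lifts to a collection of embedded boundary components of geometrically finite surfaces; on the graph-manifold side, I use the separability and double separability results of~\cite{PW} for embedded surfaces to match the desired slopes. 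After enough replacements of the form ``lift pieces, take parallel copies, and re-descend,'' I can ensure that for every slope that appears on $T$ from one side, the same slope appears the same number of times from the other. The matched pieces then assemble across each transitional torus into a finite family $\mathcal{S}$ of properly immersed incompressible surfaces in $M$, in general position.

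Finally I would verify the remaining properties. Property~(1) splits into two cases: if $g \in \pi_1 M$ has an axis contained in a single block, the block-level cubulating family supplies a cut-surface; if the axis of $g$ crosses some JSJ plane, an appropriate translate of that JSJ torus itself serves as a cut-surface, which is why it is essential to have included all JSJ tori in $\mathcal{S}$. For the Strong Separation property, I would analyze elevations $\widetilde{S}\subset \widetilde{M}$ in terms of the tree-like arrangement of blocks and JSJ planes in $\widetilde{M}$, using virtual embeddedness in graph blocks and geometric finiteness (together with the set $\mathcal T(\widetilde{S}_o)$ of adjacent JSJ and boundary planes) in hyperbolic blocks to propagate separation information across JSJ planes.

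The main obstacle is clearly the gluing step: coordinating slope patterns across transitional tori so that virtually embedded pieces in graph-manifold blocks and geometrically finite pieces in hyperbolic blocks fit together into genuine incompressible immersed surfaces in $M$, while preserving the block-wise qualities on which (3), (4), and the Strong Separation property depend. Everything else is essentially an assembly of previously established tools from the graph-manifold and hyperbolic sides.
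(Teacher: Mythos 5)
Your overall skeleton — put JSJ tori in $\mathcal{S}$, take block-level cubulating families (Corollary~\ref{cor:existence_graph} in graph blocks, Theorem~\ref{thm:hyperbolic->cubul specially by surfaces} in hyperbolic blocks), merge across transitional tori, then verify (1)--(5) — matches the paper, and your observation that elements hyperbolic in the JSJ Bass--Serre tree get cut-surfaces from the JSJ tori themselves is correct. However, the central merging step, which you rightly flag as the main obstacle, is where your plan has real gaps.

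First, the tool you propose for the hyperbolic side is wrong. Canonical completion and retraction (Theorem~\ref{thm:canonical_completion_and_retraction}) produces finite covers of special cube complexes in which a given compact locally convex subcomplex lifts with a retraction; it has nothing to say about realizing a prescribed multicurve on $\partial M^h_k$ as the boundary of a geometrically finite surface. Finding such a surface is a cohomological problem, and the paper solves it via Lemma~\ref{cor:epimorphism to Z} (an epimorphism to $\Z$ killing all peripheral subgroups except the one containing $C$ and also killing $\langle C\rangle$), leading to Corollary~\ref{cor:epimorphism for manifolds} and Proposition~\ref{prop:existence_hyperbolic}. There is no substitute for this step in your write-up. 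Second, ``after enough replacements'' does not establish termination. The paper's iteration stops after essentially two rounds precisely because Proposition~\ref{prop:existence_hyperbolic} produces a surface whose entire toroidal boundary lies on the single torus containing $C$ and covers $C$; hence the surfaces added in the second hyperbolic round introduce no new slopes and the process closes up. You need this confinement property and never state it. Third, you omit the degree-matching step entirely. Matching slopes is not enough: before one can glue copies of pieces across a transitional torus, the degree with which each boundary component covers the common circle must be made uniform. The paper does this with Lemma~\ref{lem:degrees} (choosing covers of the pieces with prescribed boundary degrees, giving a common degree $d$), and only then performs the explicit copy-counting and orientation-doubling that produces properly immersed surfaces in $M$. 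Without that, the pieces simply do not assemble. Finally, the Strong Separation verification is left as a sketch; in the paper it is a substantial case analysis resting on Lemma~\ref{lem:wall_separation} (cosparseness giving WallNbd-WallNbd Separation), Lemma~\ref{lem:far->disjoint}, Remark~\ref{rem:far->disjoint}, and Remark~\ref{rem:far+accidental->disjoint}, together with translating by elements $h\in\mathrm{Stab}(\widetilde{T})$ to push a surface off itself — none of which appears in your outline.
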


To make sense of \emph{sufficiently far} below we fix a Riemannian metric on $M$ and lift it to the universal cover $\widetilde{M}$. Note, however, that satisfying Strong Separation does not depend on the choice of this metric.

\begin{defin}
A family $\mathcal{S}$ of surfaces in $M$ satisfies the \emph{Strong Separation} property if for the family
$\widetilde{\mathcal{S}}$ of elevations to $\widetilde{M}$ of the surfaces in $\mathcal{S}$ the following hold.

\begin{enumerate}[(a)]
\item
For any $\widetilde{S}, \widetilde{S}'\in \widetilde{\mathcal{S}}$ intersecting a block $\widetilde{M}^{\hb}_k$ covering $M^{\hb}_k$, if $\widetilde{S}'\cap \widetilde{M}^{\hb}_k$ and $\widetilde{S}\cap \widetilde{M}^{\hb}_k$ are sufficiently far and
$\mathcal T(\widetilde{S}'\cap \widetilde{M}^{\hb}_k)\cap \mathcal T(\widetilde{S}\cap \widetilde{M}^{\hb}_k)=\emptyset$,
then a surface from $\widetilde{\mathcal{S}}$ separates $\widetilde{S}'$ from $\widetilde{S}$.
\item
For any $\widetilde{S},\widetilde{S}'\in \widetilde{\mathcal{S}}$ intersecting a block $\widetilde{M}^{\gb}_i$ covering $M^{\gb}_i$,
if $\widetilde{S}'\cap\widetilde{M}^{\gb}_i$ and $\widetilde{S}\cap \widetilde{M}^{\gb}_i$ are sufficiently far,
then a surface from $\widetilde{\mathcal{S}}$ separates $\widetilde{S}'$ from $\widetilde{S}$.
\end{enumerate}
\end{defin}

We consider the \emph{dual $\mathrm{CAT(0)}$ cube complex} $\widetilde{X}$ associated to $\mathcal{S}$ by
Sageev's construction. Each $\widetilde{S}\in\widetilde{\mathcal{S}}$ cuts $\widetilde{M}$ into two closed halfspaces $U,V$ and the collection of pairs $\{U,V\}$ endows $\widetilde{M}$ with a Haglund--Paulin wallspace structure (we follow the treatment of these ideas in \cite[\S 2.1]{HruW} where $U\cap V$ is allowed to be nonempty). The group $G=\pi_1M$ acting on $\widetilde{M}$ preserves this structure and hence it acts on the associated dual CAT(0) cube complex $\widetilde{X}$. The stabilizer in $G$ of a hyperplane in $\widetilde{X}$ coincides with a conjugate of $\pi_1S$ for an appropriate $S\in \mathcal{S}$ by general position. Note that if there is a cut-surface $S\in\mathcal{S}$ for $g\in G$, then $g$ acts freely on $\widetilde{X}$ \cite[Lem 7.16]{Riches}.

\smallskip
If a group $G$ acting freely on a CAT(0) cube complex $\widetilde{X}$ has a finite index subgroup $G'$ such that $G'\backslash \widetilde{X}$ is special, then we say that the action of $G$ on $\widetilde{X}$ is \emph{virtually special}. This coincides with the definition used in \cite{HW2}, by the freeness of the action and \cite[Thm~3.5 and Rem~3.6]{HW2}. We prove Theorem~\ref{thm:main} using the following criterion for virtual specialness. Disjoint hyperplanes \emph{osculate} if they have dual edges sharing an endpoint.

\begin{cri}
\label{thm:HWcriterion}
Let $G$ act freely on a $\mathrm{CAT(0)}$ cube complex $\widetilde{X}$. Suppose that:
\begin{enumerate}
\item
there are finitely many $G$ orbits of hyperplanes in $\widetilde{X}$,
\item
for each hyperplane $\widetilde{A}\subset\widetilde{X}$, there are finitely many $\mathrm{Stab}(\widetilde{A})$ orbits of hyperplanes that intersect $\widetilde{A}$,
\item
for each hyperplane $\widetilde{A}\subset\widetilde{X}$, there are finitely many $\mathrm{Stab}(\widetilde{A})$ orbits of hyperplanes that osculate with $\widetilde{A}$,
\item
for each hyperplane $\widetilde{A}\subset\widetilde{X}$, the subgroup $\mathrm{Stab}(\widetilde{A})\subset G$ is separable, and
\item
for each pair of intersecting hyperplanes $\widetilde{A},\widetilde{B}\subset\widetilde{X}$, the double coset $\mathrm{Stab}(\widetilde{A})\mathrm{Stab}(\widetilde{B})\subset G$ is separable.
\end{enumerate}Then the action of $G$ on $\widetilde{X}$ is virtually special.
\end{cri}

Criterion~\ref{thm:HWcriterion} follows directly from \cite[Thm~4.1]{HW2}, since in Conditions (4)~and~(5) we require $\mathrm{Stab}(\widetilde{A})$ and $\mathrm{Stab}(\widetilde{A})\mathrm{Stab}(\widetilde{B})$ to be closed in the profinite topology on $G$, and not only to have have closures disjoint from certain specified sets as was required in \cite[Thm~4.1]{HW2}.

\medskip

For each $M^{\gb}_i$ we choose one conjugate $P_i$ of $\pi_1M^{\gb}_i$ in $G=\pi_1M$. Then $G$ is hyperbolic relative to $\{P_i\}$ (see e.g.\ \cite{BW}) and we can discuss quasiconvexity of its subgroups relative to $\{P_i\}$ (see e.g.\ \cite[Def 2.1]{BW}). For each $S\in \mathcal{S}$, Theorem~\ref{thm:cubulation}(4) implies that $\pi_1S$ is quasiconvex in $G$ relative to $\{P_i\}$, by \cite[Thm 4.16]{BW}.

Let $\widetilde{M}^{\gb}_i\subset \widetilde{M}$ be the elevation of $M^{\gb}_i$ stabilized by $P_i$. We describe a convex $P_i$--invariant subcomplex $\widetilde{Y}_i\subset\widetilde{X}$ determined by $\widetilde{M}^{\gb}_i$.
Let $\mathcal{U}_i$ be the family of halfspaces $U$ in the wallspace $\widetilde{M}$ for which there is some $R>0$ with $\mathrm{diam}(U\cap N_R(\widetilde{M}^{\gb}_i))=\infty$, where $N_R$ denotes the $R$--neighborhood. Note that $\mathcal{U}_i$ does not depend on the fixed Riemannian metric on $M$. Let $\widetilde{Y}_i\subset\widetilde{X}$ be the subcomplex consisting of cubes spanned by the vertices whose halfspaces are all in $\mathcal{U}_i$.

By~\cite[Thm 7.12]{HruW} the group $G$
\emph{acts cocompactly on $\widetilde{X}$ relative to $\{\widetilde{Y}_i\}$} in the following sense: there exists a compact subcomplex $K\subset\widetilde{X}$ such that:
\begin{itemize}
\item
$\widetilde{X}=GK\cup\bigcup_iG\widetilde{Y}_i$,
\item
$g\widetilde{Y}_i\cap \widetilde{Y}_j\subset GK$ unless $j=i$ and $g\in P_i$, and
\item
$P_i$ acts cocompactly on $\widetilde{Y}_i\cap GK$.
\end{itemize}

Because $G$ acts freely on $\widetilde{X}$, by \cite[Prop 8.1(1)]{HruW} each $\widetilde{Y}_i$ is \emph{superconvex}, in the sense that there is a uniform bound on the diameter of a rectangle $[-d,d]\times I$ isometrically embedded in $\widetilde{X}$ on $1$--skeleton with $[-d,d]\times\{0\}\subset\widetilde{Y}_i$ and $[-d,d]\times\{1\}$ outside $\widetilde{Y}_i$.

Observe that $G=\pi_1M$ splits as a graph of groups with transitional tori groups as edge groups. The group $G$ is hyperbolic relative to the vertex groups $P_i=\pi_1M_i^{\gb}$. We now explain that to prove Theorem~\ref{thm:main} it suffices to complement Theorem~\ref{thm:cubulation} with the following.

\begin{thm}[Specialization]
\label{thm:specialization}
Let $G$ be the fundamental group of a graph of groups with free-abelian edge groups. Suppose that $G$ is hyperbolic relative to some collection of the vertex groups $\{P_i\}$. Suppose that $G$ acts cocompactly on a CAT(0) cube complex $\widetilde{X}$ relative to superconvex $\{\widetilde{Y}_i\}$. Suppose also that:

\begin{enumerate}[(i)]
\item
the action of $G$ on $\widetilde{X}$ is free and satisfies finiteness conditions (1)--(3) of Criterion~\ref{thm:HWcriterion},
\item
for any finite index subgroup $E^\circ$ of an edge group $E\subset P_i$, there is a finite index subgroup $P_i'\subset P_i$ with $P_i'\cap E\ \subset E^\circ$,
\item
the action of each $P_i$ on $\widetilde{Y}_i$ is virtually special, with finitely many orbits of codim--$2$--hyperplanes,
\item
each non-parabolic vertex group is virtually compact special.
\end{enumerate}
Then the action of $G$ is virtually special.
\end{thm}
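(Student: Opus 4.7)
The plan is to verify the remaining conditions (4) and (5) of Criterion~\ref{thm:HWcriterion} --- separability of hyperplane stabilizers, and double coset separability of stabilizers of intersecting hyperplanes --- since (1)--(3) are given by hypothesis (i). I would begin by classifying hyperplanes: call $\widetilde A\subset\widetilde X$ \emph{peripheral} if it lies in some translate $g\widetilde Y_i$, in which case $\mathrm{Stab}(\widetilde A)\subset gP_ig^{-1}$; otherwise superconvexity of each $\widetilde Y_i$ combined with relative cocompactness forces $\widetilde A$ to meet every translate of every $\widetilde Y_i$ in a bounded set, which yields relative quasiconvexity of $\mathrm{Stab}(\widetilde A)$ in $G$ with respect to $\{P_i\}$.

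The central step would be to produce a word-hyperbolic virtually compact special Dehn filling quotient $\bar G$ of $G$ in which to reduce separability to the hyperbolic setting. Using (iii), I would choose finite index $P_i^\circ\subset P_i$ with each $Y_i^\circ=P_i^\circ\backslash\widetilde Y_i$ special, and using (ii), refine $P_i^\circ$ so that its intersection with each edge group $E\subset P_i$ is arbitrarily deep in $E$. These choices let $Y_i^\circ$, together with the virtually special sparse cube complexes provided by (iv), assemble into a cubical presentation over $G\backslash\widetilde X$ whose small cancellation hypothesis is that of Theorem~\ref{thm:ladder}. The quotient $\bar G=G/\nclose{P_i^\circ}$ is then word-hyperbolic by relative Dehn filling, and combining Theorem~\ref{thm:ladder} with the Special Quotient Theorem~\ref{thm:relative special qoutient} shows $\bar G$ is virtually compact special.

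Conditions (4) and (5) would then follow as indicated. For (4): if $\widetilde A$ is peripheral, (iii) gives separability of $\mathrm{Stab}(\widetilde A)$ in $P_i$, which combines with separability of $P_i$ in $G$ to give separability in $G$; if $\widetilde A$ is non-peripheral, its image in $\bar G$ is quasiconvex and hence separable there, and a standard lifting argument transfers separability to $G$ provided $P_i^\circ$ is chosen deep enough. For (5): two peripheral intersecting hyperplanes inside the same $g\widetilde Y_i$ yield double coset separability via (iii); all other cases reduce to Gitik--Minasyan double quasiconvex coset separability in the hyperbolic group $\bar G$ followed by the same lifting argument. The principal technical obstacle will be the coordinated choice of $P_i^\circ$ --- deep enough to make $Y_i^\circ$ special, deep enough in each edge group for the small cancellation hypothesis, deep enough in $P_i$ for the filling to be hyperbolic, and compatible with (iv) --- for which hypothesis (ii) provides exactly the combinatorial flexibility needed.
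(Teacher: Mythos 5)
Your framework is essentially that of the paper: pass to a Dehn--filling quotient $\overline{G}=G/\nclose{\{P_i'\}}$ that is hyperbolic and virtually compact special (via the Special Quotient Theorem~\ref{thm:relative special qoutient} and Hypotheses (ii), (iv)), then invoke separability of quasiconvex subgroups and Gitik--Minasyan for double cosets. The way you control the choice of $P_i'$ via Hypothesis (ii) and assemble a $C'(1/12)$ cubical presentation for Theorem~\ref{thm:ladder} is also the right picture.

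However, your peripheral/non-peripheral case split introduces a genuine gap. For a hyperplane $\widetilde A\subset g\widetilde Y_i$, you claim separability of $\mathrm{Stab}(\widetilde A)$ in $G$ by ``combining'' separability of $\mathrm{Stab}(\widetilde A)$ in $P_i$ with separability of $P_i$ in $G$. Separability is not transitive in this way: if $g\in P_i-H$ and $F_P\subset P_i$ is a finite index subgroup with $g\notin F_PH$, you need a finite index subgroup $F\subset G$ with $F\cap P_i\subset F_P$, which is precisely a nontrivial engulfing statement not supplied by separability of $P_i$ alone. (Indeed Hypothesis (ii) is stated only for edge groups $E\subset P_i$, not for arbitrary finite index subgroups of $P_i$.) The paper avoids the dichotomy entirely: Lemma~\ref{lem:quasiconvex} and Lemma~\ref{lem:quotient_double_separable} treat every hyperplane (peripheral or not) uniformly inside the cubical small-cancellation quotient, showing directly that the image $\overline A$ is quasi-isometrically embedded in $\overline X$, that $\overline g\notin\overline H$ (respectively $\overline g\notin\overline H_1\overline H_2$), and then using cosparseness of the relative action, via the cocompact core $\overline X_c=\nclose{\{P_i'\}}\backslash GK$, to upgrade quasi-isometric embedding of $\overline A$ to quasiconvexity of $\overline H$ in $\overline G$. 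This last step --- converting q.i.\ embedding of the carrier into quasiconvexity of the stabiliser when $\overline X$ is only cosparse rather than cocompact --- is also missing from your outline of the non-peripheral case and is a real piece of content. You should drop the case split, carry out the disc-diagram arguments of Lemmas~\ref{lem:quasiconvex} and~\ref{lem:quotient_double_separable} (which require the systole, injectivity radius, and double-injectivity-radius control of Section~\ref{sec:separability}), and then proceed with the hyperbolic separability arguments as you describe.
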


A \emph{codim--$2$--hyperplane} in a CAT(0) cube complex is the intersection of a pair of intersecting hyperplanes.

\smallskip

We now derive the hypothesis of Theorem~\ref{thm:specialization} from the conclusion of Theorem~\ref{thm:cubulation}. By Theorem~\ref{thm:cubulation}(1), the action of $\pi_1M$ on $\widetilde{X}$ is free. Moreover, since the family $\mathcal{S}$ is finite, Condition~(1) of Criterion~\ref{thm:HWcriterion} is satisfied, and since $\mathcal{S}$ is in general position, we have Condition~(2). We now deduce Condition~(3). Disjoint hyperplanes in a CAT(0) cube complex osculate (i.e.\ have dual edges sharing an endpoint) if and only if they are not separated by another hyperplane. (The if part follows from the observation that a hyperplane dual to an edge of a shortest path between the carriers of disjoint hyperplanes separates them.) Similarly, we say that two disjoint surfaces $\widetilde{S},\widetilde{S}'\in\widetilde{\mathcal{S}}$ \emph{osculate} if there is no surface in $\widetilde{\mathcal{S}}$ separating $\widetilde{S}'$ from $\widetilde{S}$. Hence osculating hyperplanes in $\widetilde{X}$ correspond to osculating $\widetilde{S},\widetilde{S}'\in \widetilde{\mathcal{S}}$. We need to show that there are finitely many $\mathrm{Stab}(\widetilde{S})$ orbits of surfaces in $\widetilde{\mathcal{S}}$ osculating with $\widetilde{S}$. Note that if $\widetilde{S}'$ osculates with $\widetilde{S}$, then it must intersect one of the finitely many $\mathrm{Stab}(\widetilde{S})$ orbits of graph manifold and hyperbolic blocks intersected by $\widetilde{S}$, since otherwise it would be separated from $\widetilde{S}$ by a transitional plane $\widetilde{T}$. But $\widetilde{T}\in \widetilde{\mathcal{S}}$ by Theorem~\ref{thm:cubulation}(2), so $\widetilde{S}$ and $\widetilde{S}'$ would not osculate. If both $\widetilde{S}$ and $\widetilde{S}'$ intersect the same block $\widetilde{M}^{\gb}_i$, then by Strong Separation (b) of Theorem~\ref{thm:cubulation}(5) they are at bounded distance, and so there are finitely many $\mathrm{Stab}(\widetilde{S})$ orbits. If $\widetilde{S}$ and $\widetilde{S}'$ do not intersect the same graph manifold block but intersect the same block $\widetilde{M}^{\hb}_k$, then by Strong Separation (a) of Theorem~\ref{thm:cubulation}(5) they are at bounded distance hence there are finitely many $\mathrm{Stab}(\widetilde{S})$ orbits as well. This proves Condition~(3) of Criterion~\ref{thm:HWcriterion}. Hence Hypothesis~(i) of Theorem~\ref{thm:specialization} is satisfied.

Hypothesis~(ii) of Theorem~\ref{thm:specialization} coincides with \cite[Cor 4.2]{PW} which is a particular case of \cite[Thm 1]{Ham}. To verify Hypothesis~(iii) we need the following.

\begin{lemma}
\label{lem:far->disjoint}
Let $\mathcal{S}^{\hb}$ be a finite family of geometrically finite immersed incompressible surfaces in a compact hyperbolic $3$--manifold $M^{\hb}$. There exists  $R$ such that if the stabilizer of an elevation $\widetilde{S}$ to $\widetilde{M}^{\hb}$ of a surface in $\mathcal{S}^{\hb}$ intersects a stabilizer of a boundary plane $\widetilde{T}\subset \partial \widetilde M^{\hb}$ along an infinite cyclic group, then $N=N_R(\widetilde{S})\cap \widetilde{T}$ is nonempty.

Moreover, assume that we have two such elevations $\widetilde{S},\widetilde{S}'$ of possibly distinct surfaces.
If $\widetilde{S}\cap \widetilde{T}$ and $\widetilde{S}'\cap \widetilde{T}$ are nonempty and at distance $\geq R$ in the intrinsic metric on $\widetilde{T}$
(resp.\ $N$ and $N'=N_R(\widetilde{S}')\cap \widetilde{T}$ are sufficiently far with respect to some specified $r$),
then $\widetilde{S}$ and $\widetilde{S}'$ are disjoint (resp.\ at distance $\geq r$) and $\mathcal T(\widetilde{S})\cap \mathcal T(\widetilde{S}')\subset\{\widetilde{T}\}$.
\end{lemma}

Note that $N\subset \widetilde{T}$ is at a finite Hausdorff distance from a line, since the intersection of the stabilizers of $\widetilde{S}$ and $\widetilde{T}$ is infinite cyclic.

\begin{proof}
We can assume that the Riemannian metric on $M^{\hb}$ is hyperbolic and the toroidal boundary components are horospherical.
The first assertion follows from the fact that the surfaces in $\mathcal{S}^{\hb}$ have finitely many maximal parabolic subgroups.
For the second assertion, note that all elevations $\widetilde{S}\subset \widetilde{M}^{\hb}$ of surfaces in $\mathcal{S}^{\hb}$ are relatively quasiconvex.
It is well known (see e.g.\ \cite[Prop 3.1]{MP}), that each $\widetilde{S}^\star=\widetilde{S}\cup (\mathcal T(\widetilde{S})-\widetilde{T})$ is relatively quasiconvex as well. Then (see e.g.\ \cite[Lem 3.11]{Wen}) there is a constant $R$ such that the nearest point projection $\Pi$ onto $\widetilde{T}$ maps $\widetilde{S}^\star$ into $N_R(\widetilde{S}^\star)\cap \widetilde{T}$. Since $N_R(\widetilde T')\cap \widetilde T$ is bounded for each $\widetilde T'\in \mathcal T(\widetilde{S})-\widetilde{T}$, after increasing $R$ we can assume $\Pi(\widetilde{S}^\star)\subset N=N_R(\widetilde{S})\cap\widetilde{T}$. Since $\Pi$ is $1$--Lipschitz, the distance between $\widetilde{S}$ and $\widetilde{S}'$ is bounded below by the distance between $N$ and $N'$. This proves the second assertion.
\end{proof}

We now verify Hypothesis~(iii), by appealing to Criterion~\ref{thm:HWcriterion}. The action of $P_i$ on $\widetilde{Y}_i$ is free.
By the choice of $\mathcal{U}_i$ in the definition of $\widetilde{Y}_i$, any hyperplane $\widetilde{A}$ intersecting $\widetilde{Y}_i$ corresponds to a surface $\widetilde{S}\in \widetilde{\mathcal{S}}$ that for some $R>0$ has $N_R(\widetilde{S})\cap \widetilde{M}^{\gb}_i$ of infinite diameter. Consequently $\mathrm{Stab}(\widetilde{S})$ nontrivially intersects $P_i=\mathrm{Stab}(\widetilde{M}^{\gb}_i)$. It suffices then to fix $R$ from Lemma~\ref{lem:far->disjoint}.

Condition~(1) of Criterion~\ref{thm:HWcriterion} is immediate. To prove Conditions (2)~and~(3), it suffices to justify the claim that any pair of surfaces $\widetilde{S},\widetilde{S}'\in \widetilde{\mathcal{S}}$ with $N=N_R(\widetilde{S})\cap \widetilde{M}^{\gb}_i,N'=N_R(\widetilde{S}')\cap \widetilde{M}^{\gb}_i$ sufficiently far is separated by another surface in $\widetilde{\mathcal{S}}$. If both $\widetilde{S},\widetilde{S}'$ intersect $\widetilde{M}^{\gb}_i$, then the claim follows from Strong Separation~(b) in Theorem~\ref{thm:cubulation}(5). Otherwise, if one of $\widetilde{S},\widetilde{S}'$ is disjoint from $\widetilde{M}^{\gb}_i$ and they are not separated by a JSJ plane, then they both intersect a hyperbolic block $\widetilde{M}^{\hb}_k$ adjacent to $\widetilde{M}^{\gb}_i$. By Lemma~\ref{lem:far->disjoint}, for every $r$, if $N$ and $N'$ are sufficiently far, then $\widetilde{S}\cap \widetilde{M}^{\hb}_k$ and $\widetilde{S}'\cap\widetilde{M}^{\hb}_k$ are at distance $\geq r$ and $\mathcal{T}(\widetilde{S}\cap \widetilde{M}^{\hb}_k)\cap \mathcal{T}(\widetilde{S}'\cap \widetilde{M}^{\hb}_k)=\emptyset$. Then the claim follows from Strong Separation~(a). As a consequence of Condition~(2) the complex $\widetilde{Y}_i$ has finitely many $P_i$ orbits of codim--$2$--hyperplanes.

The nontrivial stabilizers in $P_i$ of hyperplanes in $\widetilde{Y}_i$ correspond to either fundamental groups of the pieces of $S$ in $M^{\gb}_i$, which are virtually embedded in $M^{\gb}_i$ by Theorem~\ref{thm:cubulation}(3) or infinite cyclic subgroups of the fundamental groups of the transitional tori, to which by \cite[Cor~4.3]{PW} (or \cite{Ham}) we can also associate virtually embedded $\partial$--parallel annuli. All these stabilizers are separable by \cite[Thm 1.1]{PW} and double coset separable by \cite[Thm 1.2]{PW}. Hence we have Conditions (4)~and~(5) of Criterion~\ref{thm:HWcriterion}, and by Criterion~\ref{thm:HWcriterion} the action of $P_i$ on $\widetilde{Y}_i$ is virtually special. This is Hypothesis~(iii).

Hypothesis~(iv) follows from the following, where we do not assume that $\partial M$ is toroidal. While this strengthening is not spelled out in \cite{Hier}, the proof goes through without a change.

\begin{thm}[{\cite[Thm 14.29]{Hier}}]
\label{thm:hyperbolic->special}
Let $M$ be a compact hyperbolic manifold with nonempty boundary. Then $\pi_1M$ is virtually compact special.
\end{thm}

\section{Surfaces in graph manifold blocks}
\label{sec:graph}
The goal of the next three sections is to prove Theorem~\ref{thm:cubulation} (Cubulation). We first review the existence results for surfaces in graph manifolds with boundary. Let $M^{\gb}$ be a \emph{graph manifold}, i.e.\ a compact connected oriented irreducible $3$--manifold with only Seifert fibered blocks in its JSJ decomposition. Assume $\partial M^{\gb}\neq\emptyset$. If $M^{\gb}$ is Seifert fibered, then an immersed incompressible surface $S\rightarrow M^{\gb}$ is \emph{horizontal} if it is transverse to the fibers and \emph{vertical} if it is a union of fibers. An immersed incompressible surface $S\rightarrow M^{\gb}$ that is not a $\partial$--parallel annulus is assumed to be homotoped so that its pieces are horizontal or vertical.

\begin{prop}[{\cite[Prop 3.1]{PW}}]
\label{prop:existence_graph}
Let $M^{\gb}$ be a graph manifold with $\partial M^{\gb}\neq\emptyset$. There exists a finite cover $\widehat{M}^{\gb}$ with a finite family $\mathcal{S}^{\gb}$ of embedded incompressible surfaces that are not $\partial$--parallel annuli such that:
\begin{itemize}
\item for each block $\widehat{B}\subset \widehat{M}^{\gb}$ and each torus $T\subset \partial \widehat{B}$, there is a surface $S\in \mathcal{S}^{\gb}$ such that $S\cap T$ is nonempty and vertical with respect to $\widehat{B}$,
\item for each block $\widehat{B}\subset \widehat{M}^{\gb}$ there is a surface $S\in \mathcal{S}^{\gb}$ such that $S\cap \widehat{B}$ is horizontal.
\end{itemize}
Every block $\widehat{B}\subset\widehat{M}^{\gb}$ is a product of a circle and a surface.
\end{prop}

Let $\mathcal{F}$ be a family of properly embedded essential arcs and curves in a compact hyperbolic surface $\Sigma$ with geodesic boundary. We say that $\mathcal{F}$ \emph{strongly fills} (resp.\ \emph{fills}) $\Sigma$ if the complementary components on $\Sigma$ of the geodesic representatives of the arcs and curves in $\mathcal{F}$ are discs (resp.\ discs or annuli parallel to the components of $\partial \Sigma$). This does not depend on the choice of the hyperbolic metric on $\Sigma$.

\begin{constr}
\label{constr:existence_graph}
Let $M^{\gb}$ be a non-thin graph manifold with $\partial M^{\gb}\neq\emptyset$. Consider $\widehat{M}^{\gb}$ and $\mathcal{S}^{\gb}$ satisfying Proposition~\ref{prop:existence_graph}. Add the following surfaces to $\mathcal{S}^{\gb}$:
\begin{itemize}
\item
all JSJ tori of $\widehat{M}^{\gb}$,
\item
vertical tori in each block $\widehat{B}\subset \widehat{M}^{\gb}$, whose base curves fill $\Sigma$.
\end{itemize}
Then the base arcs and curves of the vertical pieces of $\mathcal{S}^{\gb}$ in $\widehat{B}$ strongly fill $\Sigma$.
We retain the notation $\mathcal{S}^{\gb}$ for the projection of this extended family to $M^{\gb}$.
\end{constr}

\begin{rem}
\label{rem:existence_graph}
Let $\mathcal{S}^{\gb}$ be the family from Construction~\ref{constr:existence_graph}. Let $B$ be a block of $M^{\gb}$. For each $g\in\pi_1B$, some piece of $\mathcal{S}^{\gb}$ in $B$ is a cut-surface for $g$.
\end{rem}

To prove Strong Separation in Theorem~\ref{thm:cubulation} (Cubulation) we will need the following WallNbd-WallNbd Separation property in blocks.

\begin{defin}[{\cite[\S 8.3]{HruW}}]
Let $\mathcal{S}$ be a family of immersed incompressible surfaces in a compact Riemannian $3$--manifold $M$. Let $\widetilde{\mathcal{S}}$ be the family of elevations of the surfaces in $\mathcal{S}$ to the universal cover $\widetilde{M}$ of $M$. The family $\mathcal{S}$ has \emph{WallNbd-WallNbd Separation} if for any $r$ there is $d=d(r)$ such that if $\widetilde{S},\widetilde{S}'\in\widetilde{\mathcal{S}}$ have neighborhoods $N_r(\widetilde{S}), N_r(\widetilde{S}')$ at distance $\geq d$, then $N_r(\widetilde{S}), N_r(\widetilde{S}')$ are separated by a surface in $\widetilde{\mathcal{S}}$. This property is independent of the choice of Riemannian metric, but the value of $d$ might vary.
Similarly $\mathcal{S}$ has \emph{Ball-Ball Separation} if for any $r$ there is $d$ such that each pair of metric $r$--balls at distance $\geq d$ is separated by a surface in $\widetilde{\mathcal{S}}$.

We analogously define WallNbd-WallNbd Separation and Ball-Ball Separation for a family of essential arcs and curves in a compact hyperbolic surface.
\end{defin}

The following is easy to prove directly, but for uniformity of our arguments, we will deduce it in Section~\ref{sec:hyperbolic} from Criterion~\ref{thm:WallNbd}.

\begin{lem}
\label{lem:wall_separation}
A strongly filling family of arcs and curves in a hyperbolic surface satisfies WallNbd-WallNbd Separation.
Consequently, if their base arcs and curves strongly fill, then the vertical pieces of $\mathcal{S}^{\gb}$ in $B$ satisfy WallNbd-WallNbd Separation.
\end{lem}

Let $\Sigma$ be the base orbifold of a non-thin block $B\subset M^{\gb}$. The fundamental groups of the components of $\partial \Sigma$ intersect trivially. Thus by the compactness of the base arcs of the annular vertical pieces of $\mathcal{S}^{\gb}$ in $B$ we have the following analogue of Lemma~\ref{lem:far->disjoint}.

\begin{rem}
\label{rem:far->disjoint}
Let $\mathcal{S}^{\gb}$ be a finite family of immersed incompressible surfaces in a non-thin graph manifold $M^{\gb}$. There exists $R$ with the following property. Let $B\subset M^{\gb}$ be a block with elevation $\widetilde{B}\subset \widetilde{M}^{\gb}$ and let $\widetilde{S},\widetilde{S}'$ be elevations to $\widetilde{M}^{\gb}$ of surfaces in $\mathcal{S}^{\gb}$. Suppose that $\widetilde{S}_o=\widetilde{S}\cap \widetilde{B}$ and $\widetilde{S}'_o=\widetilde{S}'\cap\widetilde{B}$ are both vertical, and suppose that there is a plane $\widetilde{T}\subset\partial \widetilde{B}$
intersecting both $\widetilde{S}_o$ and $\widetilde{S}'_o$. If the distance between the lines $\widetilde{S}_o\cap\widetilde{T}$ and $\widetilde{S}'_o\cap \widetilde{T}$ is $\geq R$ in the intrinsic metric on $\widetilde{T}$, then
$\widetilde{S}_o$ and $\widetilde{S}_o'$ are disjoint and $\mathcal T(\widetilde{S}_o)\cap \mathcal T(\widetilde{S}_o')=\{\widetilde{T}\}$.
\end{rem}

In \cite{PW} we also established the following.
\begin{cor}[{\cite[Cor 3.3]{PW}}]
\label{cor:existence_graph}
Let $M^{\gb}$ be a graph manifold with $\partial M^{\gb}\neq\emptyset$. There exists a finite cover $\widehat{M}^{\gb}$ of $M^{\gb}$ such that for each essential circle $C$ in a torus $T\subset \partial \widehat{M}^{\gb}$ there is an incompressible surface $S_C$ embedded in $\widehat{M}^{\gb}$ with $S_C\cap T$ consisting of a nonempty set of circles parallel to $C$.
\end{cor}

Finally, the following holds by \cite[Thm 2.3]{RW}.
\begin{lem}
\label{lem:covers of emb are emb}
Let $S$ be an incompressible surface embedded in a graph manifold $M^{\gb}$. Let $S'\rightarrow S$ be a finite cover. Then $S'\rightarrow M^{\gb}$ is virtually embedded.
\end{lem}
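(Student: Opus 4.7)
The plan is to find a finite-index subgroup $H \leq \pi_1 M^g$ containing $\pi_1 S'$ and satisfying $H \cap \pi_1 S = \pi_1 S'$. Because $S$ is embedded and two-sided (by the paper's standing assumption on $M$), the stabiliser of a chosen elevation $\widetilde{S} \subset \widetilde{M}^g$ under the $\pi_1 M^g$--action equals $\pi_1 S$, and distinct elevations of $S$ are disjoint. Hence in the cover $\widehat{M}^g \to M^g$ corresponding to such an $H$, the component of the preimage of $S$ through $\widetilde{S}$ is the embedded copy of $\pi_1 S' \backslash \widetilde{S} = S'$ that we seek. So the whole task is to produce $H$.

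I would build $H$ by exploiting the graph-of-groups structure of $\pi_1 M^g$, whose vertex groups are the Seifert fibered block groups $\pi_1 B_i$ and whose edge groups are JSJ and boundary torus groups. The embedded surface $S$ induces a sub-graph-of-groups with vertex groups $\pi_1 S_i$ (pieces of $S$ in blocks) and edge groups circle subgroups, and $S' \to S$ corresponds to a covering of this sub-graph-of-groups, giving for each block a finite cover $S'_i \to S_i$. After passing to a finite cover of $M^g$ one may assume each $B_i = \Sigma_i \times S^1$ is a product and each $S_i$ is either horizontal (a cover of $\Sigma_i$) or vertical ($c_i \times S^1$ for a curve $c_i \subset \Sigma_i$). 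In either case a finite-index subgroup $H_i \leq \pi_1 B_i$ with $H_i \cap \pi_1 S_i = \pi_1 S'_i$ is easy to write down from the product structure (for horizontal $S_i$, take $H_i = \pi_1 S'_i \times \Z$; vertical pieces are analogous).

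The substantive step is then to glue these local block covers into a single finite cover of $M^g$, which requires matching the induced covering degrees along each common JSJ or boundary torus. This is exactly what Construction~\ref{const:injective} is designed to provide: applied to $M^g$ and $S$ with parameter divisible by every local degree arising in the block-by-block step, it yields a finite cover of $M^g$ to which $S$ lifts and whose degrees on boundary/JSJ tori intersected by $S$ are all sufficiently divisible to accommodate the various $H_i$. Refining inside this common cover so that the piece covers $S'_i$ match on each JSJ torus gives a finite cover $\widehat{M}^g \to M^g$ with $\pi_1 \widehat{M}^g = H$ and $H \cap \pi_1 S = \pi_1 S'$. The main obstacle is precisely this degree-matching across JSJ tori between pieces of $S$ that live in different blocks, and Construction~\ref{const:injective} (together with Proposition~\ref{prop:existence_graph} for producing the block-level surfaces of the right slope) reduces it to bookkeeping.
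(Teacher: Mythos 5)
The paper disposes of this lemma in one line: it follows immediately from \cite[Thm 2.3]{RW}, a combinatorial criterion of Rubinstein--Wang characterising which $\pi_1$-injective surfaces in graph manifolds are virtually embedded. Your proposal instead tries to construct the separating finite-index subgroup $H\leq\pi_1 M^g$ directly, so it is a genuinely different route. Your reduction is sound: because $S$ is embedded and two-sided, distinct $\pi_1 M^g$-translates of $\widetilde S$ are pairwise disjoint, and therefore any finite-index $H\supseteq\pi_1 S'$ with $H\cap\pi_1 S\subseteq\pi_1 S'$ yields a cover of $M^g$ in which the elevation of $S'$ through the basepoint is an embedding. The block-by-block outline is also reasonable in spirit.

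However, there is a genuine gap exactly at the step you flag as ``the main obstacle'' and then claim Construction~\ref{const:injective} ``reduces to bookkeeping.'' Construction~\ref{const:injective} only guarantees that $S$ lifts to $(M^g)^S_N$ and that the covering degrees on the JSJ/boundary tori met by $S$ are divisible by $N/N_0$; it says nothing about how the induced torus covers decompose relative to the slope $S\cap T$, and nothing that forces $H\cap\pi_1 S$ to land inside $\pi_1 S'$. To assemble your block subgroups $H_i$ into a global $H$, you must realize, for each JSJ torus $T$, a prescribed finite cover $\widehat T\to T$ whose restriction to $S\cap T$ is the prescribed cover $\widehat c$, \emph{and} do this compatibly from both sides of $T$ using only the two ambient Seifert fibrations. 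This compatibility is a nontrivial constraint governed by the slopes and fibre directions (it is precisely what the Rubinstein--Wang dilation/gluing criterion encodes), and divisibility alone does not discharge it. As written, the proposal does not show that the matching can always be achieved, and invoking Construction~\ref{const:injective} for this purpose is a misapplication: in the paper that construction is used for the quite different task of shrinking an edge group (Hypothesis~(ii) of Theorem~\ref{thm:specialization}), not for controlling $H\cap\pi_1 S$. To close the gap you would either have to carry out the slope/degree matching explicitly across the JSJ graph --- essentially re-deriving the relevant part of \cite{RW} --- or instead observe that $\pi_1 S$ is a virtual retract of $\pi_1 M^g$ (which follows from the separability machinery of \cite{PW}) and take $H=r^{-1}(\pi_1 S')$ for a retraction $r$.
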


\section{Surfaces in hyperbolic blocks}
\label{sec:hyperbolic}

We now review the existence results for surfaces in hyperbolic blocks.
First we establish a hyperbolic analogue of Proposition~\ref{prop:existence_graph}.

\begin{thm}[{compare \cite[Cor 14.33]{Hier}}]
\label{thm:hyperbolic->cubul specially by surfaces}
Let $M^{\hb}$ be a compact hyperbolic $3$--manifold with nonempty boundary. There is in $M^{\hb}$ a finite family $\mathcal{S}^{\hb}$ of geometrically finite immersed incompressible surfaces containing cut-surfaces for all elements of $\pi_1M^{\hb}$. Moreover, the surfaces have no accidental parabolics, i.e.\ any parabolic element in $\pi_1S$ with $S\in\mathcal{S}^{\hb}$ lies in $\pi_1C$ for some component $C$ of $\partial S$.
\end{thm}
\begin{proof}
We follow the proof of \cite[Lem 14.32]{Hier}.
By Theorem~\ref{thm:hyperbolic->special}, without loss of generality we can assume $\pi_1M^{\hb}=\pi_1X$ for a compact special cube complex $X$. Since $\pi_1M^{\hb}$ acts freely on the universal cover $\widetilde{X}$ of $X$, for every $g\in \pi_1M^{\hb}$ there is a CAT(0) geodesic axis $\R\subset \widetilde{X}$. Let $\widetilde{B}\subset \widetilde{X}$ be any hyperplane intersected transversely by $\gamma$.
The subgroup $\mathrm{Stab}(\widetilde{B})\subset \pi_1M^{\hb}$ is geometrically finite, since otherwise by Tameness \cite{Atam,CG} and Covering \cite{Thu,Can} Theorems it would be virtually a fiber subgroup and would not admit a virtual retraction guaranteed by Theorem~\ref{thm:canonical_completion_and_retraction}.

Let $\widetilde{M}^{\hb}\subset \H^3$ be the universal cover of $M^{\hb}$. The boundary of the hyperbolic convex core $N$ of the $\mathrm{Stab}(\widetilde{B})$ cover of $M^{\hb}$ consists of finitely many geometrically finite surfaces. Suppose first that $g$ is hyperbolic. Since $\widetilde{M}^{\hb}$ and $\widetilde{X}$ are quasi-isometric, the geodesic axis $\R$ for $g$ in $\widetilde{M}^{\hb}$ intersects an elevation $\widetilde{N}$ of $N$. Thus there is an elevation $\widetilde{S}\subset \partial \widetilde N$ of a surface $S\subset \partial N$ intersecting $\R$ as well. Hence $S$ is a cut-surface for $g$. If $S$ contains essential circles $C_j$ not homotopic into $\partial S$ with $\pi_1C_j$ parabolic, then let $A_j$ be annuli joining $C_j$ to the boundary. Since all $A_j$ lie on the same side of $S$ as $N$, the circles $C_j$ are disjoint up to homotopy. Then a boundary surface of the convex core for one of the components of $S-\bigcup_jC_j$ is a cut-surface for $g$ and has no accidental parabolics. In the case where $g$ is parabolic, it suffices to use an axis $\R$ for $g$ on a horosphere.
\end{proof}

\subsection{WallNbd-WallNbd Separation}
We now describe a tool from \cite{HruW} for verifying WallNbd-WallNbd Separation in relatively hyperbolic spaces.

\begin{defin}
\label{def:WallNbd_relative}
Let $\mathcal{S}$ be a finite family of immersed incompressible surfaces in a compact Riemannian $3$--manifold $M$. Let $\widetilde{\mathcal{S}}$ be the family of elevations of the surfaces in $\mathcal{S}$ to the universal cover $\widetilde{M}$ of $M$. Let $T\subset M$  be a connected subspace, and let $\widetilde{T}$ be its elevation to $\widetilde{M}$.

We say that $\mathcal{S}$ satisfies \emph{WallNbd-WallNbd Separation} in $T$ if for any $r$ there is $d$ such that if $\widetilde{S},\widetilde{S}'\in\widetilde{\mathcal{S}}$ have nonempty $N=N_r(\widetilde{S})\cap \widetilde{T}, N'=N_r(\widetilde{S}')\cap \widetilde{T}$ at distance $\geq d$, then $N$ and $N'$ are separated in $\widetilde{T}$ by a restriction to $\widetilde{T}$ of a surface in $\widetilde{\mathcal{S}}$.

We say that $\mathcal{S}$ satisfies \emph{Ball-WallNbd Separation} in $T$ if for any $r$ there is $d$ such that if $\widetilde{S}\in\widetilde{\mathcal{S}}$ has nonempty $N=N_r(\widetilde{S})\cap \widetilde{T}$ and $m$ is a point of $\widetilde{T}$ with $N'=N_r(m)\cap \widetilde{T}$ at distance $\geq d$ from  $N$, then $N$ and $N'$ are separated in $\widetilde{T}$ by a restriction to $\widetilde{T}$ of a surface in $\widetilde{\mathcal{S}}$.
\end{defin}

\begin{cri}[{\cite[Cor 8.10]{HruW}}]
\label{thm:WallNbd}
Let $\mathcal{S}$ be a finite family of:
\begin{enumerate}[(a)]
\item essential arcs and curves in a compact hyperbolic surface $M$ satisfying Ball-Ball Separation, or

\item immersed incompressible surfaces in a compact Riemannian $3$--manifold $M$. Let $T_i\subset M$ be connected subspaces. Suppose that $\pi_1M$ is hyperbolic relative to $E_i\subset \pi_1M$ that are the images of $\pi_1T_i$. Assume that $\pi_1S$ is relatively quasiconvex for each $S\in \mathcal{S}$. Suppose that $\mathcal{S}$ satisfies Ball-Ball Separation in $M$ and WallNbd-WallNbd Separation and Ball-WallNbd Separation in all $T_i$.
\end{enumerate}
Then $\mathcal{S}$ satisfies WallNbd-WallNbd Separation in $M$.
\end{cri}

The hypothesis of Ball-Ball Separation can be verified using the following.

\begin{lemma}[{\cite[Lem 5.3]{HruW}}]
\label{lem:ball separation}
Let $\mathcal{S}$ be a finite family of:
\begin{enumerate}[(a)]
\item
essential arcs and curves in a compact hyperbolic surface $M$, or
\item
immersed incompressible surfaces in a compact Riemannian $3$--manifold $M$.
\end{enumerate}
If the action of $\pi_1M$ on the associated dual CAT(0) cube complex is free, then $\mathcal{S}$ satisfies Ball-Ball Separation.
\end{lemma}

Consequently, Criterion~\ref{thm:WallNbd}(a) and Lemma~\ref{lem:ball separation}(a) yield Lemma~\ref{lem:wall_separation}.

\begin{cor}
\label{cor:WallNbd_hyperbolic}
Let $\mathcal{S}^{\hb}$ be a finite family of geometrically finite surfaces in a hyperbolic $3$--manifold $M^{\hb}$. Suppose that:
\begin{enumerate}[(i)]
\item
for each $g\in \pi_1M^{\hb}$ there is a cut-surface for $g$ in $\mathcal{S}^{\hb}$, and
\item
for each parabolic element $g\in \pi_1S$ with $S\in \mathcal{S}^{\hb}$ there is a surface $S'\in \mathcal{S}^{\hb}$ with a curve $C\subset\partial S'$ such that $g^n$ is conjugate to an element of $\pi_1C$ for some $n\neq 0$.
\end{enumerate}
Then $\mathcal{S}$ satisfies WallNbd-WallNbd Separation in $M^{\hb}$.
\end{cor}

Let $\partial_t M^{\hb}\subset \partial M^{\hb}$ denote the union of toroidal boundary components.

\begin{proof}
We verify the hypotheses of Criterion~\ref{thm:WallNbd}(b). Ball-Ball Separation in $M^{\hb}$ follows from Hypothesis~(i) and Lemma~\ref{lem:ball separation}(b). We now verify WallNbd-WallNbd Separation and Ball-WallNbd Separation in a torus $T_i$, where $\bigsqcup_i T_i=\partial_tM^{\hb}$. Consider an elevation $\widetilde{T}$ of $T_i$ to the universal cover $\widetilde{M}^{\hb}$ of $M^{\hb}$. For any $r$ there is $d$ such that the intersections $N_r(\widetilde{S})\cap \widetilde{T}$ or $N_r(m)\cap \widetilde{T}$ from the definition of WallNbd-WallNbd Separation and Ball-WallNbd Separation are either of diameter $\leq d$ or at Hausdorff distance $\leq d$ from a line $\widetilde{C}$ in $\widetilde{T}$ stabilized by $g\in \pi_1S\cap\pi_1T_i$. For $S'\in \mathcal{S}^{\hb}$, the intersections $\widetilde{S'}\cap \widetilde{T}$ are infinite families of parallel lines. By Hypothesis~(ii), their directions include the direction of each $\widetilde{C}$ above. This yields WallNbd-WallNbd Separation and Ball-WallNbd Separation in $T_i$.
\end{proof}

\subsection{Capping off surfaces}
We will need one more crucial piece of information concerning the existence of surfaces in hyperbolic blocks with designated boundary circles.

\begin{prop}
\label{prop:capping off}
Let $M^{\hb}$ be a compact hyperbolic $3$--manifold and let $C_0,\ldots,C_n$ be essential circles in the tori $T_0\sqcup\cdots\sqcup T_n=\partial_t M^{\hb}$.
There exists a geometrically finite immersed incompressible surface $S\rightarrow M^{\hb}$ with $S\cap\partial_tM^{\hb}$ covering $C_0$ and such that all the parabolic elements of $\pi_1S$ are conjugate to $\pi_1C_i$.
\end{prop}

In the proof we will need the following relative version of the Special Quotient Theorem.

\begin{thm}[{\cite[Lem 16.13 and Rem 6.14]{Hier}}]
\label{thm:relative special qoutient}
Let $G$ be a compact special group that is hyperbolic relative to free-abelian subgroups $\{E_i\}$. Then there are finite index subgroups $E^\circ_i\subset E_i$ such that for any further subgroups $E^c_i\subset E^\circ_i$ with $E_i/E^c_i$ finite or virtually cyclic, the quotient $G/\nclose{\{E^c_i\}}$ is hyperbolic and virtually compact special. Moreover, each $E_i/E^c_i$ embeds into $G/\nclose{\{E^c_i\}}$.
\end{thm}

We will also use the following combination theorem.

\begin{thm}[{\cite[Thm 1.1]{MP}}]
\label{thm:Eduardo}
Let $S\subset M^{\hb}$ be an incompressible geometrically finite surface in a hyperbolic manifold $M^{\hb}$.
Let $C_j$ be components of $\partial S$ contained in boundary tori $T_{i_j}$ of $M^{\hb}$ (some $T_{i_j}$ may coincide).
Then for almost all cyclic covers $T'_j$ of $T_{i_j}$ to which $C_j$ lift, the fundamental group $\pi_1S^\star$ of the graph of spaces $S^\star$ obtained by amalgamating $S$ with $T'_j$ along $C_j$
embeds in $\pi_1M^{\hb}$ and is relatively quasiconvex. Moreover, every parabolic subgroup of $\pi_1S^\star$ is conjugate in $\pi_1S^\star$ to a subgroup of $\pi_1S$ or $\pi_1T'_j$.
\end{thm}

\begin{proof}[Proof of Proposition~\ref{prop:capping off}]
By Theorem~\ref{thm:hyperbolic->special} without loss of generality we can assume that $G=\pi_1M^{\hb}$ is compact special. By Theorem~\ref{thm:relative special qoutient}, there are $g_i\in \pi_1C_i$ such that
$\overline{G}=G/\nclose{\{g_i\}}$ is hyperbolic and virtually compact special. For a subgroup $F\subset G$ we denote by $\overline{F}$ its image in $\overline{G}$. By Theorem~\ref{thm:relative special qoutient} we obtain additionally that for each $E=\pi_1 T$ with $T\subset \partial_tM^{\hb}$ the quotient $\overline{E}\subset \overline{G}$ is infinite. We will prove that there is a finite index normal subgroup $G'\subset G$ satisfying the following.
\begin{enumerate}[(i)]
\item For each $E=\pi_1 T$ the image of $\overline{E\cap G'}\rightarrow \homology_1(\overline {G'})$ has rank $1$.

\item The cover $M'$ of $M^{\hb}$ corresponding to $G'$ has $$\mathrm{rk}\ \homology_2(M')/\mathrm{im}\big(\homology_2(\partial M')\rightarrow \homology_2(M')\big)\geq 2.$$
\end{enumerate}

Properties~(i) and~(ii) are preserved when passing to further finite covers, so it suffices to achieve them separately. To obtain Property~(i), let $E=\pi_1 T$. By canonical completion and retraction (see Theorem~\ref{thm:canonical_completion_and_retraction}), there is a finite index subgroup $\overline{G}'\subset \overline{G}$ that retracts onto an infinite cyclic subgroup $\Z\subset \overline{E}\cap \overline{G}'$. Thus $\Z$ embeds in $\homology_1(\overline{G}')$. The preimage $G'\subset G$ of $\overline{G}'\subset \overline{G}$ satisfies Property~(i) for the specified $E$.
Property~(ii) follows directly from \cite[Cor 1.4]{CLR}.

By Property~(i), there is a map $f_*\colon G'\rightarrow \Z$ factoring through $\homology_1(\overline{G}')$ and with $f_*$
nontrivial on each $g^{-1}Eg\cap G'$ with $g\in G$. Let $f\colon M'\rightarrow S^1$ be a map inducing $f_*$. By Sard's theorem, there is a point $s\in S^1$ so that $S'=f^{-1}(s)\subset M'$ is a properly embedded surface, possibly disconnected. Then $S'\cap \partial_tM'$ is a union of families of identically oriented circles covering $C_i$. We compress $S'$ to an incompressible surface with the same boundary.

We now claim that in the case where $S'$ is a fiber, without changing $\partial S'$ we can change $S'$ so that it is geometrically finite. In $\homology_2(M',\partial M';\R)$ we consider the Thurston norm ball $B_x$, see \cite{Thnorm}. Let $L\subset \homology_2(M',\partial M';\R)$ be the subspace of homology classes whose restriction to $\homology_1(\partial M';\R)$ is proportional to $[\partial S']$. By Property~(ii) the rank of the image of $\homology_2(M')$ in $\homology_2(M',\partial M')$ is $\geq 2$. Hence the dimension of $L$ is $\geq 2$. We can take $S'$ represented by a point on a ray not passing through a maximal face of $B_x\cap L$. By \cite[Thm 3]{Thnorm}, the surface $S'$ is not a fiber of $M'$. By Covering \cite{Thu} and Tameness \cite{BonT}, the surface $S'$ is geometrically finite, proving the claim.

Let $S'_o$ be a component of $S'$ intersecting $T_0$. Note that each parabolic element of $\pi_1S'_o$ is conjugate into $\pi_1C'$ for some component $C'\subset\partial S'$ since $S'$ intersects all the boundary tori of $M'$.
Consequently all parabolic elements of $\pi_1S'_o$ are conjugate into $\pi_1C_i$.

Since $S'_o$ is geometrically finite, Theorem~\ref{thm:Eduardo} applies to $S'_o$. Let $\{C'_j\}$ be the components of $\partial S'_o$ outside $T_0$, and for each $j$ let $T_{i_j}$ denote the torus containing $C'_j$. Let $S^\star$ be the graph of spaces obtained by amalgamating $S'_o$ along $C'_j$ with $T'_j$ provided by Theorem~\ref{thm:Eduardo}.
Then $S^\star\rightarrow M'$ extends to an immersion $N\rightarrow M'$, where $N$ is the regular neighborhood of $S^\star$ in the $\pi_1S^\star$ cover of $M'$. Let $S$ be the non-toroidal component of $\partial N$, that is the component that can be decomposed into two surfaces parallel to $S'_o$ that are combined along curves parallel to $C'_j$. The immersed surface $S\rightarrow M'$ is incompressible, since $\pi_1S$ embeds in $\pi_1S^\star$.
Moreover $S$ is not a virtual fiber since $\pi_1 S^\ast$ is relatively quasiconvex and of infinite index in $\pi_1M^{\hb}$.
By Theorem~\ref{thm:Eduardo} every parabolic element of $\pi_1S$ is conjugate in $\pi_1S^\star$ to a parabolic element of $\pi_1S'_o$ or into one of the $\pi_1T_j'$. But each parabolic element of $\pi_1S'_o$ is conjugate into some $\pi_1C_i$. Moreover, the intersection of $\pi_1S$ with a $\pi_1S^\star$ conjugate of $\pi_1T'_j$ lies in a conjugate of $\pi_1C'_j$. Thus the immersion $S\rightarrow M'\rightarrow M^{\hb}$ has the desired property for parabolic elements.
\end{proof}

\section{Cubulation}
\label{sec:cubul}

In this section we combine the surfaces described in the graph manifold blocks and hyperbolic blocks. To prove Theorem~\ref{thm:cubulation} (Cubulation) we need the following:

\begin{lem}
\label{lem:degrees}
Let $S$ be a connected compact surface with $\chi(S)<0$.
There exists $K=K(S)$ such that for each assignment of a positive integer $n_C$ to each boundary circle $C\subset\partial S$, there is a connected finite
cover $\widehat{S}\rightarrow S$ whose degree on each component of the preimage of $C$ equals $Kn_C$.
\end{lem}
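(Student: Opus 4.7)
The plan is to realize the desired cover as the pullback to $S$ of a finite quotient of a suitable closed $2$--orbifold. Let $m$ denote the number of boundary circles of $S$, and fix once and for all an integer $K=K(S)$ with $K\cdot|\chi(S)|>m$; this depends only on $S$.

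Given positive integers $n_C$ for each boundary circle $C\subset\partial S$, I would form the closed $2$--orbifold $O$ by capping off each $C$ with a disc carrying a single cone point of order $Kn_C$. The orbifold Euler characteristic is
$\chi^{\mathrm{orb}}(O)=\chi(S)+\sum_C 1/(Kn_C)$,
which is negative by the choice of $K$, so $O$ is a hyperbolic $2$--orbifold and $\pi_1^{\mathrm{orb}}(O)$ is a cocompact Fuchsian group. By construction, each boundary loop $c_C$ of $S$ is conjugate in $\pi_1^{\mathrm{orb}}(O)$ to the cone-point generator of its capping disc, and therefore has order exactly $Kn_C$.

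Next, since $\pi_1^{\mathrm{orb}}(O)\subset\mathrm{PSL}(2,\R)$ is a finitely generated linear group, Selberg's lemma yields a torsion-free finite-index subgroup; intersecting its finitely many conjugates gives a torsion-free finite-index normal subgroup $\Gamma\subset\pi_1^{\mathrm{orb}}(O)$. Set $G=\pi_1^{\mathrm{orb}}(O)/\Gamma$. Because $\Gamma$ is torsion-free, every torsion element of $\pi_1^{\mathrm{orb}}(O)$ injects into $G$ with the same order, so the composition $\phi\colon\pi_1 S\to\pi_1^{\mathrm{orb}}(O)\to G$ sends each $c_C$ to an element of order precisely $Kn_C$.

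Finally, I would take $\widehat S\to S$ to be the connected regular cover of $S$ associated to $\ker\phi$. Its deck group is $\phi(\pi_1 S)\subseteq G$, so each component of the preimage of $C$ maps onto $C$ with the same degree, equal to the order of $\phi(c_C)$, namely $Kn_C$. I do not foresee any serious obstacle; the one point that deserves attention is the verification that the cone-point element of a hyperbolic $2$--orbifold has order equal to its labelling (and not merely dividing it), which is standard since hyperbolic $2$--orbifolds are good and develop faithfully into $\H^2$.
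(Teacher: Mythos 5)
Your proof is correct, and it takes a genuinely different route than the paper's. The paper sets $K$ to be the degree of a cover of $S$ with nonzero genus and then invokes \cite[Lem 4.7]{PW}, which (roughly) says that once $S$ has positive genus one can prescribe boundary degrees directly by an explicit construction. You instead cap each boundary circle with a cone point of order $Kn_C$; the choice $K\,|\chi(S)|>m$ (where $m=|\pi_0(\partial S)|$) forces $\chi^{\mathrm{orb}}(O)=\chi(S)+\sum_C 1/(Kn_C)<0$, hence $O$ is a good hyperbolic closed $2$--orbifold, so the cone-point generators have exactly the specified orders in the cocompact Fuchsian group $\pi_1^{\mathrm{orb}}(O)$. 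Selberg's lemma (or Fenchel's theorem) gives a torsion-free normal finite-index subgroup $\Gamma$, and since torsion injects with its order into $G=\pi_1^{\mathrm{orb}}(O)/\Gamma$, the composite $\phi\colon\pi_1S\twoheadrightarrow\pi_1^{\mathrm{orb}}(O)\twoheadrightarrow G$ sends $c_C$ to an element of order exactly $Kn_C$. In the regular cover corresponding to $\ker\phi$, each component of the preimage of $C$ covers $C$ with degree equal to the order of $\phi(c_C)$ in the deck group, which is precisely $Kn_C$. This is all sound. The two approaches buy different things: the paper's keeps the construction elementary (covering-space theory on surfaces of positive genus) and leans on a prepared lemma from \cite{PW}, while yours is self-contained and conceptually cleaner, at the cost of invoking hyperbolic orbifold theory and Selberg's lemma and giving a cover whose total degree $|G|$ is not explicitly controlled (irrelevant for the statement, which only constrains the per-component boundary degrees).
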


We can allow $S$ to be disconnected. We can also allow annular components, but obviously require that the integers $n_C$ coincide for both boundary circles of such a component.

\begin{proof}
Let $K=K(S)$ be the degree of a cover of $S$ with nonzero genus. The lemma follows from \cite[Lem 4.7]{PW}.
\end{proof}

\begin{proof}[Proof of Theorem~\ref{thm:cubulation}]
The proof has two steps. In the first step we construct a family $\mathcal{S}$ of surfaces satisfying Theorem~\ref{thm:cubulation}(1)--(4). In the second step we prove that $\mathcal{S}$ satisfies the Strong Separation property in Theorem~\ref{thm:cubulation}(5).

\smallskip

\noindent \textbf{Construction.}
Let $\mathcal{S}^{\hb}_k$ be the family of surfaces in $M^{\hb}_k$ given by Theorem~\ref{thm:hyperbolic->cubul specially by surfaces}. Let $\mathcal{C}$ be the family of circles embedded in the transitional tori of $M$
that are covered by the boundary circles of the surfaces in $\{\mathcal{S}^{\hb}_k\}$ up to homotopy on the tori. Note that every transitional torus contains circles of $\mathcal{C}$. Let $\mathcal{C}_i\subset \mathcal{C}$ be the circles lying in $\partial M^{\gb}_i$.

By Corollary~\ref{cor:existence_graph} each circle $C\in\mathcal{C}_i$ is covered by a boundary circle of an immersed incompressible surface $S^{\gb}_C\rightarrow M^{\gb}_i$ virtually embedded in $M^{\gb}_i$.
Let $\mathcal{S}^{\gb}_i$ be the family of surfaces in $M^{\gb}_i$ provided by Construction~\ref{constr:existence_graph} and
let $\mathcal{S}'^{\gb}_i=\mathcal{S}^{\gb}_i\cup\{S^{\gb}_C\}_{C\in \mathcal{C}_i}$.

Let $\mathcal{C}'$ denote the family of circles embedded in the transitional tori of $M$ covered (up to homotopy) by the boundary circles of the surfaces in $\{\mathcal{S}'^{\gb}_i\}$.
Let $\mathcal{C}'_k\subset \mathcal{C}'$ be the circles lying in $\partial M^{\hb}_k$.
By Proposition~\ref{prop:capping off},
for each circle $C'\in\mathcal{C}'_k$ there is a geometrically finite immersed incompressible surface $S^{\hb}_{C'}\rightarrow M^{\hb}_k$ such that $S^{\hb}_{C'}\cap\partial_tM^{\hb}_k$ is nonempty and covers $C'$. Moreover, we require that all the parabolic elements of $\pi_1S^{\hb}_{C'}$ are conjugate to $\pi_1C$ with $C\in \mathcal{C}'$. Let
$\mathcal{S}'^{\hb}_k=\mathcal{S}^{\hb}_k\cup \{S^{\hb}_{C'}\}_{C'\in \mathcal{C}'_k}$.

We will apply Lemma~\ref{lem:degrees} to produce families of surfaces $\{\widehat{\mathcal{S}}'^{\hb}_k\}, \{\widehat{\mathcal{S}}'^{\gb}_i\}$ covering $\{\mathcal{S}'^{\hb}_k\},\{\mathcal{S}'^{\gb}_i\}$ such that $\mathcal{S}_o=\{\widehat{\mathcal{S}}'^{\hb}_k\}\cup\{\widehat{\mathcal{S}}'^{\gb}_i\}$ has the following property: There is a uniform $d$ such that for each circle in $\mathcal{C'}$, each component of its preimage in a surface in $\mathcal{S}_o$ covers it with degree $d$.
In order to arrange this, for a boundary circle $C$ of a surface in
$\{\mathcal{S}'^{\hb}_k\}\cup\{\mathcal{S}'^{\gb}_i\}$ let $d_C$ denote the degree with which $C$ maps onto a circle in $\mathcal{C}'$. Let $n_C=\frac{1}{d_C}\prod_Cd_C$. Applying Lemma~\ref{lem:degrees} with this choice of $\{n_C\}$ provides the uniform $d=K\prod_Cd_C$. Note that for an annular surface the degrees $d_C$ coincide and hence the numbers $n_C$ coincide. We can then take a cyclic cover.

We will now extend each surface $S_o\in\mathcal{S}_o$ to a surface immersed properly in $M$ by combining appropriately many copies of other surfaces in
$\mathcal{S}_o$. First assume $S_o\in \widehat{\mathcal{S}}'^{\gb}_i$. Let $\mathcal{C}'_o\subset \mathcal{C}'$ denote the set of circles covered by the boundary components of $S_o$ and let $m_{C'}$ denote the number of components of $S_o$ mapping to the circle $C'\in \mathcal{C}'_o$. Denote by $\widehat{S}^{\hb}_{C'}$ the surface in $\mathcal{S}_o$ covering $S^{\hb}_{C'}$ and by $l_{C'}$ the number of boundary components of $\widehat{S}^{\hb}_{C'}$ covering $C'$. Let $L=\prod_{C'\in \mathcal{C}'_o}l_{C'}$. Take $2L$ copies of $S_o$ and $2m_{C'}\frac{L}{l_{C'}}$ copies of $\widehat{S}^{\hb}_{C'}$, for each $C'$, with two opposite orientations. These surfaces combine to form a desired immersed incompressible surface extending $S_o$. Note that for each $C'\in \mathcal{C}'$ the surface $\widehat{S}^{\hb}_{C'}$ appears within such extension of some surface $S_o\in\widehat{\mathcal{S}}'^{\gb}_i$.

Hence it remains to consider the case $S_o\in \widehat{\mathcal{S}}^{\hb}_k\subset \widehat{\mathcal{S}}'^{\hb}_k$, where $\widehat{\mathcal{S}}^{\hb}_k$ is the family of surfaces covering the surfaces in $\mathcal{S}^{\hb}_k$. This case is treated similarly to the previous one. Let $\mathcal{C}_o\subset \mathcal{C}$ be the set of circles covered by the boundary components of $S_o$. Consider all the surfaces $\widehat{S}^{\gb}_C$ covering $S^{\gb}_C$ for $C\in \mathcal{C}_o$. Let $\mathcal{C}'_o\subset \mathcal{C}'$ denote the set of circles covered by the boundary components of these surfaces $\widehat{S}^{\gb}_C$. Consider all the surfaces $\widehat{S}^{\hb}_{C'}$, where $C'\in \mathcal C'_o$. Gluing the appropriate number of copies of $S_o, \widehat{S}^{\gb}_C$, and  $\widehat{S}^{\hb}_{C'}$ gives the desired extension.

We denote the union of both of these families of extended surfaces together with the family of the
JSJ tori by $\mathcal{S}$. So $\mathcal{S}$ obviously satisfies Theorem~\ref{thm:cubulation}(2). Observe that Theorem~\ref{thm:cubulation}(1) follows from Theorem~\ref{thm:cubulation}(2) and the existence of cut-surfaces in Theorem~\ref{thm:hyperbolic->cubul specially by surfaces} and Construction~\ref{constr:existence_graph}. The surfaces in $\widehat{\mathcal{S}}'^{\gb}_i$ are virtually embedded in $M^{\gb}_i$ by Lemma~\ref{lem:covers of emb are emb}, hence $\mathcal{S}$ satisfies Theorem~\ref{thm:cubulation}(3). The surfaces in $\widehat{\mathcal{S}}'^{\hb}_k$ are geometrically finite and thus $\mathcal{S}$ satisfies Theorem~\ref{thm:cubulation}(4).

We also record that by the way we have applied Proposition~\ref{prop:capping off} to construct $S^{\hb}_{C'}$, the pieces of $\mathcal{S}$ in every hyperbolic block $M_k^{\hb}$ satisfy Hypothesis~(ii) of Corollary~\ref{cor:WallNbd_hyperbolic}.

\vspace{2mm}
\noindent
\textbf{Strong Separation.}
We now verify Theorem~\ref{thm:cubulation}(5). We adopt the convention that in thin graph manifold blocks $T\times I$ we choose the vertical direction so that all the pieces in $T\times I$ of the surfaces in $\mathcal{S}$ are horizontal.
Let $R$ be a constant satisfying Lemma~\ref{lem:far->disjoint} and Remark~\ref{rem:far->disjoint} in all hyperbolic and non-thin Seifert fibered blocks of $M$, with respect to the pieces of $\mathcal{S}$.

We first prove Strong Separation (b). Suppose that $\widetilde{S},\widetilde{S}'\in\widetilde{\mathcal{S}}$ intersect a graph manifold block $\widetilde{M}^{\gb}_i\subset \widetilde{M}$. We need to show that if $\widetilde{S}\cap\widetilde{M}^{\gb}_i$ and $\widetilde{S}'\cap\widetilde{M}^{\gb}_i$ are sufficiently far, then they are separated by another surface in $\widetilde{\mathcal{S}}$.

First consider the case where $\widetilde{S}'$ intersects a JSJ or boundary plane $\widetilde{T}$ of $\widetilde{M}^{\gb}_i$ intersected by $\widetilde{S}$.
Since by Theorem~\ref{thm:cubulation}(1) the components of $S\cap M^{\gb}_i$ are virtually embedded, there is $h\in\mathrm{Stab}(\widetilde{T})$ such that the surfaces $\widetilde{S}\cap \widetilde{M}^{\gb}_i$ and $h\widetilde{S}\cap\widetilde{M}^{\gb}_i$ are disjoint. Moreover, by passing to a power of $h$ we can assume that they are at distance $\geq R$.
Let $\widetilde{M}_{\mathrm{hor}}\subset \widetilde{M}^{\gb}_i$ be the maximal graph manifold containing $\widetilde{T}$ such that $\widetilde{S}$ is horizontal in all the blocks of $\widetilde{M}_{\mathrm{hor}}$.
In the extreme cases $\widetilde M_{\mathrm{hor}}$ can equal $\widetilde M^{\gb}_i$ or $\widetilde T$.
Let $\widetilde{N}$ be the union of $\widetilde{M}_{\mathrm{hor}}$ with the adjacent hyperbolic and Seifert fibered blocks. By Lemma~\ref{lem:far->disjoint} and Remark~\ref{rem:far->disjoint} the surfaces $\widetilde{S}\cap \widetilde{N}$ and $h\widetilde{S}\cap \widetilde{N}$ are disjoint and the boundary lines of $\widetilde{S}\cap \widetilde{N}$ and $h\widetilde{S}\cap \widetilde{N}$ do not intersect a common JSJ plane outside $\widetilde{M}_{\mathrm{hor}}$. Hence the entire $\widetilde{S}$ and $h\widetilde{S}$ are disjoint.

For each $\widetilde{S}$ and $\widetilde{T}$ we fix $h$ as above. The surface $h\widetilde{S}\cap \widetilde{M}_{\mathrm{hor}}$ is in a bounded neighborhood of $\widetilde{S}\cap \widetilde{M}_{\mathrm{hor}}$. Hence if $\widetilde{S}'\cap \widetilde{M}^{\gb}_i$ is sufficiently far from $\widetilde{S}\cap \widetilde{M}^{\gb}_i$, then $\widetilde{S}'\cap \widetilde{M}_{\mathrm{hor}}$ is at distance $\geq R$ from each of $h^{\pm1}\widetilde{S}\cap\widetilde{M}_{\mathrm{hor}}$. As before $\widetilde{S}'$ is disjoint from both $h^{\pm1}\widetilde{S}$, and one of $h^{\pm1}\widetilde{S}$ separates $\widetilde{S}'$ from $\widetilde{S}$, as desired.
Since there are finitely many $\mathrm{Stab}(\widetilde{S}\cap\widetilde{M}^{\gb}_i)$ orbits of JSJ or boundary planes $\widetilde{T}$ of $\widetilde{M}^{\gb}_i$ intersected by $\widetilde{S}$, this argument works for all $\widetilde{T}$ simultaneously.

To complete the proof of Strong Separation (b) it remains to consider a second case where $\widetilde{S}'$ intersects a Seifert fibered block $\widetilde{M}_o\subset \widetilde{M}^{\gb}_i$ intersected by $\widetilde{S}$, but is disjoint from the JSJ and boundary planes of $\widetilde{M}^{\gb}_i$ intersected by $\widetilde{S}$. In that case the pieces of $\widetilde{S}$ and $\widetilde{S}'$ in $\widetilde M_o$ are vertical. Since the proof for Strong Separation (a) is the same, we perform it simultaneously: in that case $\widetilde{M}_o$ denotes the hyperbolic block $\widetilde{M}^{\hb}_k$. In both cases if we denote as usual by $\mathcal T(\widetilde{S}\cap \widetilde M_o)$ the set of JSJ and boundary planes in $\partial \widetilde M_o$ intersecting $\widetilde{S}$, then $\mathcal T(\widetilde{S}\cap \widetilde M_o)$ and $\mathcal T(\widetilde{S}'\cap \widetilde M_o)$ are disjoint.

As before, for any JSJ or boundary plane $\widetilde{T}\in \mathcal T(\widetilde{S}\cap \widetilde M_o)$ we fix $h\in \mathrm{Stab}(\widetilde{T})$ such that $\widetilde{S}$ and $h\widetilde{S}$ are disjoint. We do the same with $\widetilde{S}$ replaced by $\widetilde{S}'$. There is $R'$ such that for each $\widetilde{T}$, the translate $h\widetilde{S}\cap\widetilde{T}$ is contained in the $R'$--neighborhood of $\widetilde{S}\cap\widetilde{T}$ in the intrinsic metric on $\widetilde{T}$, and the same property holds with $\widetilde{S}$ replaced by $\widetilde{S}'$. Let $d=d(r)$ be a WallNbd-WallNbd Separation constant guaranteed by Lemma~\ref{lem:wall_separation} and Corollary~\ref{cor:WallNbd_hyperbolic} for $r=R+R'$ in all hyperbolic and non-thin Seifert fibered blocks with respect to the pieces of $\mathcal{S}$.

If the piece $\widetilde{S}'\cap\widetilde{M}_o$ is at distance $\geq 2r+d$ from the piece $\widetilde{S}\cap \widetilde{M}_o$, then by WallNbd-WallNbd Separation there is a surface $\widetilde{S}^*\in \widetilde{\mathcal{S}}$ such that $\widetilde{S}^*\cap \widetilde{M}_o$ separates $N_r(\widetilde{S}'\cap \widetilde{M}_o)$ from $N_r(\widetilde{S}\cap \widetilde{M}_o)$ in $\widetilde{M}_o$.
If $\mathcal T(\widetilde{S}^*\cap\widetilde{M}_o)$ is disjoint from $\mathcal T(\widetilde{S}'\cap \widetilde{M}_o)\cup \mathcal T(\widetilde{S}\cap\widetilde{M}_o)$, then $\widetilde{S}^*$ is disjoint from $\widetilde{S}'$ and $\widetilde{S}$ and separates them, as desired.

Otherwise, if $\mathcal T(\widetilde{S}^*\cap\widetilde{M}_o)$ intersects $\mathcal T(\widetilde{S}'\cap \widetilde{M}_o)\cup \mathcal T(\widetilde{S}\cap\widetilde{M}_o)$, we can assume without loss of generality that there is a JSJ or boundary plane $\widetilde{T}\in \mathcal T(\widetilde{S}^*\cap\widetilde{M}_o)\cap\mathcal T(\widetilde{S}'\cap\widetilde{M}_o)$. By the definition of $R'$ and $r=R+R'$, there is a translate $h\widetilde{S}'$ disjoint from $\widetilde{S}'$
such that $h\widetilde{S}'\cap \widetilde{T}$ separates $\widetilde{S}'\cap \widetilde{T}$ from $N_{R}(\widetilde{S}^*\cap \widetilde{T})$ in the intrinsic metric on $\widetilde{T}$.
Moreover, by Remark~\ref{rem:far->disjoint} or Lemma~\ref{lem:far->disjoint} the surface $h\widetilde{S}'\cap\widetilde{M}_o$ is disjoint from $\widetilde{S}^*\cap\widetilde{M}_o$ and $\mathcal T(h\widetilde{S}'\cap\widetilde{M}_o)$ intersects $\mathcal T(\widetilde{S}^*\cap\widetilde{M}_o)$ only in $\widetilde{T}$.
Hence $h\widetilde{S}'$ and $\widetilde{S}$ are disjoint and $h\widetilde{S}'$ separates $\widetilde{S}'$ from $\widetilde{S}$, as desired.
\end{proof}

\section{Separability in special cube complexes}
\label{sec:separability}

The goal of the next three sections is to prove Theorem~\ref{thm:specialization} (Specialization). We begin with reviewing the definition of a special cube complex.

\subsection{Special cube complexes}
\begin{defin}[{compare \cite[Def 3.2]{HW}}]
\label{def:special}
Let $X$ be a nonpositively curved cube complex, possibly not compact.
A \emph{midcube} (resp.\ \emph{codim--$2$--midcube}) of an $n$--cube $[0,1]^n=I^n$ is the subspace obtained by restricting exactly one (resp.\ two) coordinate to $\frac{1}{2}$. Let $\mathcal{M}$ denote the disjoint union of all
midcubes (resp.\ codim--$2$--midcubes) of $X$. An \emph{immersed hyperplane} (resp.\ \emph{immersed codim--$2$--hyperplane}) of $X$ is a connected component of the quotient of $\mathcal{M}$ by the inclusion maps.

An immersed hyperplane (resp.\ immersed codim--$2$--hyperplane) $A$ of $X$ \emph{self-intersects} if it contains two different midcubes (resp.\ codim--$2$--midcubes) of the same cube of $X$.
If $A$ does not self-intersect, then it embeds into $X$, and is called a \emph{hyperplane} (resp.\ \emph{codim--$2$--hyperplane}).
If the hyperplanes of $X$ do not self-intersect, which happens for example when $X$ is CAT(0), then codim--$2$--hyperplanes are components of intersections of pairs of intersecting hyperplanes. For an immersed hyperplane $A$, the map $A\rightarrow X$ is $\pi_1$--injective since it is a local isometry. We shall regard $\pi_1A$ as a subgroup of $\pi_1X$.

An edge $e$ is \emph{dual} to an immersed hyperplane $A$ if $A$ contains the midcube of $e$. A hyperplane $A$ is \emph{two-sided} if one can orient all of its dual edges so that any two that are parallel in a square $s$ of $X$ are oriented consistently within $s$.

If a hyperplane $A$ is two-sided and we orient its dual edges as above, we say that $A$ \emph{directly self-osculates}, if it has two dual edges with the same initial vertex or with the same terminal vertex. If $A$ is two-sided and the initial vertex of one of its dual edges coincides with the terminal vertex of another or the same dual edge, then $A$ \emph{indirectly self-osculates}.

Distinct hyperplanes $A,B$ \emph{interosculate}, if there are dual edges $e_1,e_2$ of $A$ and $f_1,f_2$ of $B$ such that $e_1,f_1$ lie in a square and $e_2,f_2$ share a vertex but do not lie in a square.

A nonpositively curved cube complex is \emph{special}, if its immersed hyperplanes do not self-intersect, are two-sided, do not directly self-osculate or inter-osculate.

A group is \emph{special} if it is the fundamental group of a special cube complex.
\end{defin}

Note that we do not require special cube complexes to be compact. However, in this article we will always assume that they have finitely many hyperplanes.

\begin{thm}[{\cite[Thm 4.2]{HW}}]
\label{thm:Artin}
A special cube complex $X$ with finitely many hyperplanes admits a local isometry $X\rightarrow R(X)$ into the Salvetti complex $R(X)$ of a finitely generated right-angled Artin group.
\end{thm}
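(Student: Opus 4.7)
The plan is to construct a right-angled Artin group $A(X)$ whose generators are in bijection with the (finitely many) hyperplanes of $X$, with a defining commutation relation $[a,b]=1$ for each pair of hyperplanes that cross in $X$. Its Salvetti complex $R(X)$ is the nonpositively curved cube complex with one vertex, one oriented edge-loop for each generator, and an $n$-torus glued in for each $n$-clique of pairwise commuting generators, i.e.\ for every collection of pairwise-intersecting hyperplanes in $X$. The hyperplanes of $R(X)$ are in natural bijection with the generators. Using the two-sidedness of each hyperplane $A$ of $X$, I would first coherently orient all edges dual to $A$ and label them by the associated generator; this defines a combinatorial map $\phi\colon X^{(1)}\to R(X)^{(1)}$ sending every vertex of $X$ to the unique vertex of $R(X)$ and sending each labelled oriented edge to the corresponding loop.

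Next, to extend $\phi$ over the higher-dimensional cubes of $X$, note that any $n$-cube $c\subset X$ is dual to $n$ distinct hyperplanes (using no self-intersection) that pairwise cross inside $c$. Hence the corresponding generators pairwise commute, so $R(X)$ contains the corresponding $n$-torus, and $c$ is mapped to it via the labelling/orientation. This produces a combinatorial cubical map $\phi\colon X\to R(X)$. To promote $\phi$ to a local isometry it suffices to verify that for every vertex $v\in X$ the induced map of links $\mathrm{lk}(v,X)\to \mathrm{lk}(*,R(X))$ is an injection onto a full subcomplex. The no-self-intersection and no-direct-self-osculation conditions give injectivity on $0$-simplices of the link: two distinct edges at $v$ with the same label and orientation would either force a self-intersection of the hyperplane or a direct self-osculation. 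Injectivity on higher simplices then follows, because an $n$-simplex of $\mathrm{lk}(v,X)$ is exactly an $(n{+}1)$-cube of $X$ at $v$, determined by its distinct pairwise-crossing dual hyperplanes at $v$.

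The key step, and the main obstacle, is \emph{fullness}: if edges $e_0,\dots,e_n$ at $v$ have labels that pairwise commute in $A(X)$, one must show they actually span a cube of $X$ at $v$. Pairwise commutativity means the hyperplanes dual to the $e_i$ cross somewhere in $X$. The no-interosculation hypothesis upgrades this to a crossing \emph{at $v$}: for any two of these edges $e_i,e_j$ the hyperplanes $A_i,A_j$ cross in $X$ yet share a corner at $v$, so by the absence of interosculation they must actually span a square at $v$. Iterating this (passing from pairwise squares to the required $(n{+}1)$-cube by the link condition / Gromov's criterion in the nonpositively curved complex $X$) produces the cube spanning $e_0,\dots,e_n$, which gives the corresponding simplex in $\mathrm{lk}(v,X)$ mapping onto the given simplex of $\mathrm{lk}(*,R(X))$. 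Thus $\phi$ is a local isometry, finishing the proof. The delicate point throughout is that each of the three defining properties of specialness plays a distinct and essential role: two-sidedness to define the labelling, no self-intersection and no direct self-osculation for injectivity on links, and no interosculation for fullness.
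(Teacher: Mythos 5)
Your argument is correct and is essentially the proof of \cite[Thm 4.2]{HW} that this paper cites without reproving: one builds the right-angled Artin group on the hyperplanes with a commutation relation for each crossing pair, uses two-sidedness to get a well-defined labelled/oriented map to the Salvetti complex, and verifies the local-isometry condition by showing the link maps are injective (via no self-intersection and no direct self-osculation) and have full image (via no interosculation plus Gromov's flag condition). The only point worth making explicit is that in $\mathrm{lk}(*,R(X))$ there is no edge between $a^+$ and $a^-$, so the edge-ends appearing in a prospective simplex automatically have pairwise distinct dual hyperplanes, which is what licenses the appeal to the interosculation axiom in the fullness step.
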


The generators of the Artin group correspond to the hyperplanes of $X$. Each edge of $X$ dual to a hyperplane $A\subset X$ is mapped by the local isometry to an edge of $R(X)$ labeled by the generator corresponding to $A$. Note that $R(X)$ is compact special.

\smallskip

Our goal is to revisit and strengthen hyperplane separability and double hyperplane separability established in \cite{HW} for compact special cube complexes. The starting point and the main tool is the following.
\begin{thm}[{\cite[Cor 6.7]{HW}}]
\label{thm:canonical_completion_and_retraction}
Let $Y\rightarrow X$ be a local isometry from a compact cube complex $Y$ to a special cube complex $X$. There is a finite cover $\widehat{X}\rightarrow X$, called the \emph{canonical completion} of $Y\rightarrow X$, to which $Y$ lifts and a \emph{canonical retraction} map $\widehat{X}\rightarrow Y\subset \widehat{X}$, restricting to the identity on $Y$, which is continuous and maps hyperplanes of $\widehat{X}$ intersecting $Y$ into themselves.
\end{thm}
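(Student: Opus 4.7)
The plan is to use Theorem~\ref{thm:Artin} to reduce the construction to the case where the target is a Salvetti complex, and then to give an explicit combinatorial construction there. First, I would compose to obtain a local isometry $Y\to R(X)$ into the Salvetti complex of a right-angled Artin group whose generators are indexed by the hyperplanes of $X$. For each such generator $a$, the edges of $Y$ dual to the $a$--hyperplane, oriented by the two-sidedness of $a$, form a directed graph $\Gamma_a$ on the vertex set of $Y$.

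The key combinatorial observation is that $\Gamma_a$ is a disjoint union of oriented simple paths. Two edges sharing an initial or terminal vertex would produce a direct self-osculation after mapping to $R(X)$, violating specialness of $R(X)$ combined with the local isometry property. The canonical completion $\widehat{R}\to R(X)$ of $Y\to R(X)$ is then built combinatorially: for each generator $a$, add one edge from the terminal vertex to the initial vertex of each path of $\Gamma_a$, turning $\Gamma_a$ into a disjoint union of directed cycles, and take the vertex set to be that of $Y$. This makes the $1$--skeleton a finite cover of the $1$--skeleton of $R(X)$. Then glue squares (and higher cubes) according to the commutation rules of the RAAG, so that the resulting complex $\widehat{R}$ is a finite cover of $R(X)$ into which $Y$ embeds as a subcomplex.

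To construct the retraction $\widehat{R}\to Y$, I would proceed generator by generator. For each added edge $e$ closing up a path $P=(v_0,\ldots,v_n)$ of $\Gamma_a$, send $e$ continuously onto the concatenation $P$ traversed in reverse (this is where non-cellularity enters). Extend to squares of $\widehat{R}$: a square dual to commuting generators $a,b$ with at most one edge added must be mapped to its remaining face in $Y$ using the natural product structure; a square with both edges added maps to the unique opposite vertex of $Y$. Higher cubes are handled inductively via the product decomposition supplied by the RAAG structure. The resulting map is the identity on $Y$, continuous, and sends every hyperplane of $\widehat{R}$ that meets $Y$ into itself, since the retraction only ``collapses'' along the direction of the completed hyperplane and respects the RAAG hyperplane labeling.

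Finally, define $\widehat{X}$ to be the fiber product $X\times_{R(X)}\widehat{R}$. As a pullback of a finite cover along the local isometry $X\to R(X)$, it is a finite cover of $X$ containing $Y$ as a subcomplex, and the retraction $\widehat{R}\to Y$ pulls back to the desired retraction $\widehat{X}\to Y$ with the required hyperplane-preservation property. The main obstacle is verifying that the square-by-square retraction on $\widehat{R}$ is consistent and continuous across higher-dimensional cubes where several ``missing'' edges meet; this requires exploiting both specialness of $R(X)$ (so that the structure on each cube is governed purely by commutation relations of the RAAG) and the local isometry hypothesis (so that the paths $\Gamma_a$ intersect cleanly with those for commuting generators $b$).
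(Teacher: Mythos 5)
The paper does not give a proof of this statement; it is cited verbatim as \cite[Cor 6.7]{HW}, so there is no internal argument to compare against. What you have written is essentially a reconstruction of the Haglund--Wise construction of canonical completion and retraction, and the overall architecture matches theirs: pass to the Salvetti complex $R(X)$, build the completion there one generator at a time by closing up the oriented edge-paths into cycles, extend over higher cubes using the RAAG relations, define the retraction by stretching each new edge over the path it closes, and finally pull back along $X\to R(X)$ by a fiber product. In that sense your proposal is a faithful account of the cited argument rather than an alternative route.

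A few points are worth tightening. First, $\Gamma_a$ is in general a disjoint union of oriented simple paths \emph{and cycles}; the cycles are already closed and receive no new edge, so your description should include them. Second, the reason no two $a$--edges of $Y$ can share an initial vertex (or share a terminal vertex) is simply that $Y\to R(X)$ is a combinatorial immersion at the level of links --- there is a single vertex and a single $a$--edge in $R(X)$, so two such edges of $Y$ at a vertex $v$ would force the link map at $v$ to be non-injective. Invoking specialness of $R(X)$ and ``direct self-osculation'' is a detour; the Salvetti complex is of course special, but the relevant constraint here is the local-isometry hypothesis on $Y\to R(X)$, not a specialness axiom. Third, and most substantively, the assertion that one can ``glue squares (and higher cubes) according to the commutation rules of the RAAG'' so that $\widehat{R}\to R(X)$ is a covering is exactly the nontrivial content of \cite{HW}: one must verify that for every commuting pair $a,b$ and every vertex of $\widehat{R}$, the boundary word $aba^{-1}b^{-1}$ closes up in the completed $1$--skeleton. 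This uses the local-isometry hypothesis in an essential way, and your proposal acknowledges this only at the end as ``the main obstacle''; a complete argument would need to carry out this verification (it is Lemma 6.5 and its neighbours in \cite{HW}). Similarly, the claim that the retraction sends every hyperplane of $\widehat{X}$ meeting $Y$ into itself deserves an explicit check once the retraction is pinned down, since the retraction is only continuous and not cellular. None of these are wrong turns --- they are precisely the places where the cited reference does the work --- but as written your proposal gestures at them rather than establishing them.
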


If one first subdivides $X$ (or takes an appropriate cover) to eliminate indirect self-osculations, then the canonical retraction can be made cellular.

\smallskip

All paths that we discuss in $X$ are assumed to be combinatorial. Let $\systole{X}$ denote the minimum of the lengths of essential closed paths in $X$ or $\infty$ if $X$ is contractible.

\begin{lem}
\label{lem:special_resfin}
Let $X$ be a special cube complex with finitely many hyperplanes. Then for each $d$ there is a finite cover $\widehat{X}$ of $X$ with $\systole{\widehat{X}}>d$.
\end{lem}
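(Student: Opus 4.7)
My plan is to reduce to the Salvetti complex via Theorem~\ref{thm:Artin} and then exploit residual finiteness of right-angled Artin groups, pulling the cover back to $X$.

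By Theorem~\ref{thm:Artin} there is a local isometry $\phi\colon X\to R(X)$ where the Salvetti complex $R(X)$ is compact because $X$ has finitely many hyperplanes. Recall that local isometries of nonpositively curved cube complexes are $\pi_1$--injective on each component, and likewise covering maps are $\pi_1$--injective. The group $\pi_1 R(X)$ is a finitely generated right-angled Artin group, hence linear over $\Z$ and in particular residually finite. Since $R(X)$ is a finite cube complex, for each $d>0$ there are only finitely many free-homotopy classes of essential closed combinatorial paths of length $\leq d$ in $R(X)$; equivalently, only finitely many conjugacy classes of nontrivial elements of $\pi_1 R(X)$ admit a combinatorial representative of length $\leq d$. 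By residual finiteness of $\pi_1 R(X)$, I would choose a finite-index normal subgroup avoiding all these finitely many conjugacy classes, obtaining a finite cover $\widehat{R}\to R(X)$ with $\systole{\widehat{R}}>d$.

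I would then pull this cover back along $\phi$, setting
\[
\widehat{X}\;=\;X\times_{R(X)}\widehat{R}.
\]
Since a covering map pulls back to a covering map along any continuous (here, cubical) map, the natural projection $\widehat{X}\to X$ is a finite cover of $X$, and $\widehat{X}$ inherits the structure of a cube complex from the two factors. To check $\systole{\widehat{X}}>d$, suppose for contradiction that $\gamma$ is an essential combinatorial closed path in $\widehat{X}$ of length $\leq d$. By $\pi_1$--injectivity of the cover, the projection $\phi'(\gamma)$ to $X$ is essential closed of length $\leq d$; applying $\phi$ then gives an essential closed path in $R(X)$ of length $\leq d$. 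On the other hand, the projection of $\gamma$ to $\widehat{R}$ is closed (both endpoints of $\gamma$ have the same image) and still has length $\leq d$, so it is null-homotopic in $\widehat{R}$ because $\systole{\widehat{R}}>d$. Pushing down to $R(X)$, its image is null-homotopic there as well, contradicting that $\phi\circ\phi'(\gamma)$ is essential.

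\textbf{Main difficulty.} There is essentially nothing deep here: the two points requiring care are (i) that the pullback of a cover along a cubical map is again a cover (a formal fact), and (ii) the finiteness of the number of conjugacy classes of elements of $\pi_1 R(X)$ with a combinatorial representative of length $\leq d$, which is immediate from compactness of $R(X)$ and lets residual finiteness of RAAGs be applied in a single step. The rest is tracking systoles through the pullback square, and the $\pi_1$--injectivity inputs supplied by Theorem~\ref{thm:Artin} and by covering space theory.
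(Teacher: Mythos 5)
Your proposal is correct and follows essentially the same route as the paper: local isometry to the compact Salvetti complex $R(X)$ via Theorem~\ref{thm:Artin}, finitely many conjugacy classes with short combinatorial representatives by compactness, residual finiteness of the right-angled Artin group to kill them in a finite cover $\widehat{R}$, and pulling the cover back to $X$. The only (cosmetic) difference is in the last step: the paper observes directly that $\widehat{X}\to\widehat{R}$ is a local isometry (being a pullback of one), hence $\pi_1$--injective, and concludes immediately; you instead chase a short essential loop around the commuting pullback square and derive the contradiction from $\pi_1$--injectivity of the cover $\widehat{X}\to X$ together with that of $\phi\colon X\to R(X)$, which is the same underlying fact packaged slightly differently.
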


Note that the above property is preserved when passing to further covers.

\begin{proof}
Let $X\rightarrow R=R(X)$ be the local isometry into the Salvetti complex of the finitely generated right-angled Artin group $F$ coming from Theorem~\ref{thm:Artin}. Since $R$ is compact, there is a finite set $\mathcal{F}$ of conjugacy classes of elements of $F$ that can be represented by closed paths of length $\leq d$ in $R$. Since $F$ is residually finite, it has a finite index subgroup $\widehat{F}$ disjoint from the set of elements whose classes lie in $\mathcal{F}$. Let $\widehat{R}\rightarrow R$ be the finite cover corresponding to $\widehat{F}\subset F$. Then $\systole{\widehat{R}}>d$. Let $\widehat{X}\rightarrow X$ be the pullback of $\widehat{R}\rightarrow R$. Since $\widehat{X}\rightarrow \widehat{R}$ is a local isometry, it is $\pi_1$--injective and we have
$\systole{\widehat{X}}>d$ as desired.
\end{proof}

\subsection{Separability}

A subgroup $H$ of a group $G$ is \emph{separable} if for each $g\in G-H$, there is a finite index subgroup $F$ of $G$ with $g\notin FH$.

\begin{defin}
Let $X$ be a nonpositively curved cube complex and $\widetilde{X}$ its universal cover. Let $A$ be an immersed hyperplane in $X$ with an elevation $\widetilde{A}$ in $\widetilde{X}$.
The \emph{carrier} $N(\widetilde{A})$ is the smallest subcomplex of $\widetilde{X}$ containing $\widetilde{A}$. It is isomorphic with $\widetilde{A}\times I$. The \emph{carrier} $N(A)$ is the quotient of $N(\widetilde{A})$ by $\mathrm{Stab}(\widetilde{A})$. There is an induced map $N(A)\rightarrow X$. If $A$
does not self-intersect and does not self-osculate (directly or indirectly), then $N(A)$ embeds in $X$ and we identify it with its image. We similarly define carriers of immersed codim--$2$--hyperplanes.

A path $\alpha\rightarrow X$ \emph{starting (resp.\ ending) at a vertex $v$ of $N(A)$} is a path that starts (resp.\ ends) at the image of $v$ in $X$.
The path $\alpha$ is \emph{in $N(A)$} if it lifts to a path in $N(A)$.
The path $\alpha$ is path-homotopic \emph{into $N(A)$} if it is path-homotopic to a path in $N(A)$.
\end{defin}

\begin{defin}
An immersed hyperplane $A$ in a cube complex $X$ has \emph{injectivity radius} $>d$ if all paths of length $\leq 2d$ in $X$ starting and ending at $N(A)$ are path-homotopic into $N(A)$. In particular if $d=0$, then $A$ does not self-intersect or self-osculate.
Equivalently, all elevations of $N(A)$ to the universal cover $\widetilde{X}$ of $X$ are at distance $>2d$.
\end{defin}

\begin{lem}
\label{lem:special->inj_rad}
Let $X$ be a special cube complex with finitely many hyperplanes. Let $A\subset X$ be a hyperplane. Then for each $d$ there is a finite cover $\widehat{X}\rightarrow X$ such that any elevation $\widehat{A}\subset \widehat{X}$ of $A$ has injectivity radius $>d$.
\end{lem}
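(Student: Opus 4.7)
My plan is to combine the canonical completion of Theorem~\ref{thm:canonical_completion_and_retraction} with the arbitrarily large systole provided by Lemma~\ref{lem:special_resfin}. I will focus on the case where $N(A)$ is compact of diameter $D$, which is the main case; if $N(A)$ is non-compact one passes to a suitable intermediate cover so that the relevant subcomplex of $N(A)$ is compact.

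First I apply the canonical completion to the inclusion $N(A)\hookrightarrow X$, producing a finite cover $X_0\to X$ in which $N(A)$ lifts to an embedded subcomplex $Y\subset X_0$ of diameter $D$, equipped with a retraction $r\colon X_0\to Y$ that maps hyperplanes of $X_0$ meeting $Y$ into themselves. I then invoke Lemma~\ref{lem:special_resfin} on $X_0$ to obtain a further finite cover $\widehat X\to X_0$ with $\systole{\widehat X}>2d+D$. My claim is that this $\widehat X$ works.

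To verify the claim, let $\widehat A$ be any elevation of $A$ in $\widehat X$, and let $\alpha$ be a combinatorial path of length $\leq 2d$ in $\widehat X$ from $v_0$ to $v_1$, with both endpoints in $N(\widehat A)$. Lift $\alpha$ to the universal cover $\widetilde X$, obtaining $\widetilde\alpha$ with endpoints $\widetilde v_0\in\widetilde N_0$ and $\widetilde v_1\in\widetilde N_1$, where $\widetilde N_0,\widetilde N_1$ are elevations of $N(\widehat A)$. If $\widetilde N_0=\widetilde N_1$, then since carriers of hyperplanes are convex in the CAT(0) cube complex $\widetilde X$, I reroute $\widetilde\alpha$ inside $\widetilde N_0$ and project back, obtaining the desired path-homotopy of $\alpha$ into $N(\widehat A)$. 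If instead $\widetilde N_0\neq\widetilde N_1$, I build a combinatorial path $\beta\subset N(\widehat A)$ from $v_1$ back to $v_0$ of length at most $D$, using $r$ and the embedding of $Y$ to control the length (see the obstacle below). The concatenation $\alpha\cdot\beta$ is then a loop in $\widehat X$ of length at most $2d+D$. Its lift to $\widetilde X$ starts at $\widetilde v_0\in\widetilde N_0$, passes through $\widetilde v_1\in\widetilde N_1$, and continues along the lift of $\beta$ inside $\widetilde N_1$, so it ends at a vertex of $\widetilde N_1$ distinct from $\widetilde v_0\in\widetilde N_0$. Hence the loop is essential, contradicting our systole bound.

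The main obstacle, and the reason the canonical completion is essential, is the construction of the bounded-length closing path $\beta$ inside $N(\widehat A)$: a priori the carrier $N(\widehat A)$ in the cover $\widehat X$ can have diameter much larger than $D$. The role of the retraction $r\colon X_0\to Y$ is precisely to handle this: vertices of $N(\widehat A)$ project in a controlled way to $Y$, and a short path in $Y$ between the images of $v_0$ and $v_1$ can be lifted to a carrier-supported path in $\widehat X$ whose length is controlled by $D$, independently of the degree of $\widehat X\to X_0$. Making this lifting argument fully rigorous — in particular, verifying that the lifted path lies within the \emph{same} elevation $N(\widehat A)$ rather than a nearby one — is the principal technical point of the proof.
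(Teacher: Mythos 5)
Your proposed route differs from the paper's. The paper does not apply canonical completion to $N(A)\hookrightarrow X$; instead it transfers the problem to the compact Salvetti complex via the local isometry $X\to R(X)$ of Theorem~\ref{thm:Artin}. Writing $T$ for the image of $A$ in $R(X)$, compactness of $R(X)$ yields finitely many paths of length $\leq 2d$ between points of $N(T)$ that are not path-homotopic into $N(T)$; closing these up determines finitely many nontrivial cosets of $\pi_1T$, which are excluded from a finite-index subgroup by hyperplane-subgroup separability in the right-angled Artin group \cite[Cor 9.7]{HW}. The resulting finite cover of $R(X)$ pulls back to the desired cover of $X$, and the non-compactness of $X$ is handled by the convexity of $\widetilde{X}$ inside $\widetilde{R}$.

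There is a genuine gap in your argument, and it sits exactly where you flagged the ``principal technical point'': the construction of a path $\beta$ of length $\leq D$ in $N(\widehat{A})$ from $v_1$ to $v_0$. The retraction $r\colon X_0\to Y$ gives you a short path in $Y$ between $r(\bar v_0)$ and $r(\bar v_1)$ (the images of $v_0,v_1$ in $X_0$), but lifting a short path of $Y$ along $N(\widehat A)\to N(A)$ starting at $v_1$ lands at \emph{some} preimage of $\bar v_0$, not necessarily at $v_0$: covering maps allow unique path lifting with a prescribed start, not with both endpoints prescribed. There is therefore no reason for $v_0$ and $v_1$ to be close inside $N(\widehat A)$ --- indeed if $N(\widehat A)\to N(A)$ has large degree one should expect them to be far apart --- and without $\beta$ your loop $\alpha\cdot\beta$ does not exist and the systole bound has nothing to act on. Separately, the opening reduction ``if $N(A)$ is non-compact one passes to a suitable intermediate cover so that the relevant subcomplex of $N(A)$ is compact'' is unexplained: passing to covers only enlarges $N(A)$, and it is not stated which subcomplex is ``relevant'' or why its compactness would suffice. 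The paper's route through $R(X)$ sidesteps both difficulties at once, since $R(X)$ is compact and the separability it needs is already established there.
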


In the compact case, Lemma~\ref{lem:special->inj_rad} and the following consequence was proved in {\cite[Cor 9.7]{HW}} using Theorem~\ref{thm:canonical_completion_and_retraction}. Note that the conclusion of Lemma~\ref{lem:special->inj_rad} is preserved when passing to further covers.

\begin{cor}
\label{cor:special->separable}
Let $G$ be the fundamental group of a virtually special cube complex with finitely many hyperplanes and let $H\subset G$ be the fundamental group of an immersed hyperplane. Then $H$ is separable in $G$.
\end{cor}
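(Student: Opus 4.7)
The plan is to reduce to the case where $X$ itself is special, and then combine Lemma~\ref{lem:special->inj_rad} with the covering space dictionary to exhibit, for each $g\in G\setminus H$, a finite index subgroup $F\le G$ with $g\notin FH$. First, since $X$ is virtually special with finitely many hyperplanes, there is a finite special cover $X'\to X$ with $G'=\pi_1 X'$ of finite index in $G$; the image of the chosen elevation $\widetilde A$ in $X'$ is an embedded two-sided hyperplane $A'$, and $H':=H\cap G'$ coincides with $\pi_1 N(A')$. Since $H$ is a finite union of cosets of $H'$, and every coset of a profinitely closed subgroup of $G'$ remains profinitely closed in $G$ (because finite index subgroups of $G'$ are finite index in $G$), separability of $H'$ in $G'$ implies separability of $H$ in $G$. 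We may therefore replace $X,G,H,A$ by $X',G',H',A'$ and assume $X$ is itself special, with $H=\pi_1(N(A),v)$ for some chosen vertex $v\in N(A)$.

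Given $g\in G\setminus H$, represent $g$ by a combinatorial loop $\gamma$ at $v$ of length $L$. Apply Lemma~\ref{lem:special->inj_rad} with parameter $d=L$ to produce a finite cover $\pi\colon\widehat X\to X$ in which every elevation of $A$ has injectivity radius $>L$. Fix a lift $\hat v$ of $v$, let $\widehat A$ be the elevation of $A$ whose carrier contains $\hat v$, and set $F:=\pi_1(\widehat X,\hat v)$, viewed as a finite index subgroup of $G$.

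It suffices to show that $g\notin FH$. Let $\hat\gamma$ be the lift of $\gamma$ at $\hat v$, ending at some vertex $\hat w$ above $v$. Since $A$ is embedded, the preimage $\pi^{-1}(N(A))$ is a disjoint union of carriers of elevations of $A$, so $\hat w$ lies in exactly one such carrier $N(A_j)$. A routine covering space argument shows that $g\in FH$ if and only if $N(A_j)=N(\widehat A)$: any factorisation $g=fh$ with $h\in H$ represented by a loop $\eta\subset N(A)$ at $v$ forces the lift of $\eta^{-1}$ at $\hat w$ to terminate at $\hat v$, which is only possible if $\hat w$ lies in the elevation containing $\hat v$; the converse is analogous. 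If $\hat w\in N(\widehat A)$, then $\hat\gamma$ has length $L\le 2d$ and both endpoints in $N(\widehat A)$, so the injectivity radius hypothesis yields a path-homotopy rel endpoints of $\hat\gamma$ into $N(\widehat A)$. Projecting to $X$ shows that $\gamma$ is path-homotopic into $N(A)$, hence $g\in\pi_1 N(A)=H$, contradicting $g\notin H$. Therefore $g\notin FH$, completing the proof.

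The substantive work has already been absorbed into Lemma~\ref{lem:special->inj_rad}, whose proof uses canonical completion and retraction (Theorem~\ref{thm:canonical_completion_and_retraction}) to push elevations of $N(A)$ apart in a finite cover. The remainder, outlined above, is the standard translation of ``elevations of the carrier are combinatorially isolated in a suitable finite cover'' into ``the hyperplane stabiliser is separable,'' and the only point requiring care is the covering-space bookkeeping that identifies the condition $g\in FH$ with the endpoint $\hat w$ of $\hat\gamma$ lying in $N(\widehat A)$.
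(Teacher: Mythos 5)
Your proof is correct and follows essentially the same route as the paper, which derives the corollary from Lemma~\ref{lem:special->inj_rad} via the argument of \cite[Cor 9.7]{HW}: reduce to a finite special cover, apply the injectivity-radius lemma to push apart elevations of the carrier in a further finite cover $\widehat X$, and translate $g\notin FH$ into the statement that $\hat\gamma$ ends outside $N(\widehat A)$. The covering-space bookkeeping and the reduction from $G$ to a finite-index special $G'$ are exactly the standard steps the paper is implicitly invoking.
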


\begin{proof}[Proof of Lemma~\ref{lem:special->inj_rad}]
As before, let $X\rightarrow R=R(X)$ be the local isometry into the Salvetti complex of the finitely generated right-angled Artin group $F$ coming from Theorem~\ref{thm:Artin}. Let $T$ be the hyperplane in $R$ that is the image of the hyperplane $A$. Since $R$ is compact, it admits finitely many paths starting and ending at $N(T)$ of length $\leq 2d$, not path-homotopic into $N(T)$. Let $\mathcal{F}$ denote the family of conjugacy classes determined by closing them up by paths in $N(T)$. Then $\mathcal{F}$ is a union of classes determined by finitely many nontrivial cosets of the form $Hg$, where $H=\pi_1T$. Since hyperplane subgroups in $F$ are separable \cite[Cor 9.4]{HW}, there is a finite index subgroup $\widehat{F}\subset F$ disjoint from the set of elements whose classes lie in $\mathcal{F}$. Let $\widehat{R}\rightarrow R$ be the finite cover corresponding to $\widehat{F}\subset F$. Then elevations of $T$ to $\widehat{R}$ have injectivity radius $>d$. Let $\widehat{X}\rightarrow X$ be the pullback of $\widehat{R}\rightarrow R$.

We verify that $\widehat{X}$ is the desired cover. The universal cover $\widetilde{X}$ of $X$ embeds into the universal cover $\widetilde{R}$ of $R$ as a convex subcomplex. Let $\widetilde{A}$ be an elevation of $A$ to $\widetilde{X}$ and let $\widetilde{T}$ be the elevation of $T$ to $\widetilde{R}$ containing $\widetilde{A}$. The $\pi_1 \widehat{X}$ orbit of $\widetilde{A}$ is contained in the $\pi_1 \widehat{R}$ orbit of a $\widetilde{T}$.
Since $\pi_1 \widehat{R}$ translates of $N(\widetilde{T})$ in $\widetilde{R}$ are at distance $>2d$, so are the $\pi_1 \widehat{X}$ translates of $N(\widetilde{A})$ in $\widetilde{X}$.
\end{proof}

\subsection{Double coset separability}
Let $H_1,H_2\subset G$ be subgroups of a group $G$. The \emph{double coset} $H_1H_2$ is \emph{separable}, if for each $g\in G- H_1H_2$ there is a finite index subgroup $F$ of $G$ with $g\notin FH_1H_2$.

\begin{defin}
Let $A$ be a hyperplane in a nonpositively curved cube complex $X$. Let $\widetilde{A}$ be an elevation of $A$ to the universal cover $\widetilde{X}$ of $X$. Let $\widetilde{A}^{+d}\subset \widetilde{X}$ be the combinatorial ball of radius $d$ around the carrier of $\widetilde{A}$.
We say that $A$ is \emph{$d$--locally finite}, if $\widetilde{A}^{+d}$ has finitely many $\mathrm{Stab(A)}$ orbits of hyperplanes.
\end{defin}

In particular, $A$ is $0$--locally finite if there are finitely many $\mathrm{Stab(A)}$ orbits of hyperplanes intersecting $\widetilde{A}$. If additionally there are finitely many $\mathrm{Stab(A)}$ orbits of hyperplanes osculating with $\widetilde{A}$, then $A$ is $1$--locally finite.

\begin{lemma}
\label{lem:special->double cosets separable}
Let $G$ be the fundamental group of a special cube complex with finitely many hyperplanes. Let $H_1,H_2\subset G$ be conjugates of the fundamental groups of hyperplanes one of which is $d$--locally finite for all $d$. Then the double coset $H_1H_2$ is separable in $G$.
\end{lemma}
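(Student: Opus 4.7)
I follow the compact-case strategy from \cite{HW}: build a compact local isometry $Y \to X$ containing carrier-pieces of $A_1$ and $A_2$ joined by a path representing $g$, and invoke canonical completion (Theorem~\ref{thm:canonical_completion_and_retraction}). Fix $g \notin H_1H_2$ and let $\widetilde{\gamma}$ be a combinatorial geodesic in $\widetilde{X}$ of length $d$ from a vertex $\widetilde{v}_1 \in N(\widetilde{A}_1)$ to $g\widetilde{v}_2 \in g N(\widetilde{A}_2)$, projecting to $\bar\gamma$ in $X$. Using Lemma~\ref{lem:special->inj_rad}, I pass to a finite cover of $X$ in which $A_1$ has injectivity radius exceeding $d$, so that distinct elevations of $N(\widetilde{A}_1)$ are pairwise at distance more than $2d$. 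Note that $X$ is automatically locally finite since it has finitely many hyperplanes; in particular, the combinatorial $d$-ball of $v_1$ in $N(A_1)$ is finite.

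Using $d$-local finiteness of $A_1$, I enlarge this $d$-ball by closing under the link-fullness condition in $N(A_1)$ to obtain a compact subcomplex $Y_1 \subset N(A_1)$ with $Y_1 \hookrightarrow X$ a local isometry; $d$-local finiteness is precisely what bounds the growth of this enlargement. Analogously I pick a compact, locally isometric $Y_2 \subset N(A_2)$ containing $v_2$, and form $Y = Y_1 \cup \bar\gamma \cup Y_2$. After a minor homotopy adjustment of $\bar\gamma$ near its endpoints, to avoid spurious squares at the joining vertices, $Y \to X$ is a local isometry of a compact cube complex. By Theorem~\ref{thm:canonical_completion_and_retraction} there is a finite cover $\widehat{X} \to X$ containing $Y$ as a subcomplex, together with a retraction $r\colon \widehat{X} \to Y$ that fixes $Y$ pointwise and sends each hyperplane of $\widehat{X}$ meeting $Y$ to itself.

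Set $F = \pi_1\widehat{X}$, and suppose for contradiction that $g = fh_1h_2$ with $f \in F$ and $h_i \in H_i$. Then $g\widetilde{A}_2 = fh_1\widetilde{A}_2$ since $h_2$ stabilises $\widetilde{A}_2$, so translating by $f^{-1}$ shows $\widetilde{\gamma}$ is path-homotopic in $\widetilde{X}$ to a concatenation of a path in $N(\widetilde{A}_1)$ with a lift of $\bar\gamma$ ending in $h_1 N(\widetilde{A}_2)$. Projecting to $\widehat{X}$ and applying $r$, the hyperplane-preserving property forces this lift of $\bar\gamma$ to coincide in $Y$ with the embedded copy of $\bar\gamma$, yielding $g\widetilde{A}_2 = h_1\widetilde{A}_2$ and hence $g \in H_1H_2$, a contradiction. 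The principal obstacle is the middle step: turning the $d$-ball in $N(A_1)$ into a compact locally isometric subcomplex $Y_1 \hookrightarrow X$, for which the hypothesis that $A_1$ is $d$-locally finite for every $d$ is essential; the remaining arguments then run in close analogy with the compact case in \cite{HW}.
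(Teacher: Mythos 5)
Your overall strategy --- canonical completion and retraction to separate $g$ from $H_1H_2$ --- is indeed the engine the paper uses, but your implementation has two genuine gaps, and they are exactly the obstacles that the paper's proof is designed to get around. You also invoke $d$--local finiteness for the wrong purpose.

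First, the compact $Y_1$. Closing a $d$--ball of $N(A_1)$ under link-fullness need not terminate in a compact subcomplex, and $d$--local finiteness gives no control over it: the hypothesis says only that $\widetilde{A}_1^{+d}$ has finitely many $\mathrm{Stab}(\widetilde{A}_1)$--orbits of hyperplanes, which is a statement about orbits, not about the number of cells you accumulate while making a subcomplex locally convex. More seriously, even if you could cook up some compact locally isometric $Y_1 \subset N(A_1)$, it would be the wrong object: the elements of $H_1H_2$ that you must separate $g$ from are represented by loops $\alpha\rho\beta\rho^{-1}$ with $\alpha$ an arbitrarily long closed path in $N(A_1)$, so the relevant data cannot be confined to a compact piece of the carrier. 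The paper instead works with $A^{+d}=H_1\backslash\widetilde{A}_1^{+d}$, which is non-compact but has finitely many hyperplanes precisely because $A_1$ is $d$--locally finite. That finiteness is what lets one apply Theorem~\ref{thm:Artin} to map $A^{+d}$ locally isometrically into the compact Salvetti complex $R(A^{+d})$, and then run Theorem~\ref{thm:canonical_completion_and_retraction} on the induced map $R(A^{+d})\to R(X)$ of compact complexes, pulling the cover back to $X$. The detour through Salvetti complexes is essential: it is the only way to get a compact source for canonical completion that still ``sees'' the whole infinite carrier.

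Second, even granting a compact $Y$, the contradiction step does not close. When you write $g=fh_1h_2$ and translate by $f^{-1}$, the resulting concatenation from $N(\widetilde{A}_1)$ to $h_1N(\widetilde{A}_2)$ has no reason to pass near $Y$: the element $h_1\in H_1$ can be arbitrarily large, so the lift of $\bar\gamma$ to which you want to apply the retraction can lie far outside $Y$, and then the hyperplane-preserving property of $r$ tells you nothing about where its last edge goes. In the paper's setup this is handled because the retraction lands in $R(A^{+d})$, and the hyperplane dual to the last edge of $\rho$ in any translate $h_1 N(\widetilde{A}_1)\cup h_1\widetilde\rho$ (with $h_1$ of arbitrary size) is a genuine hyperplane of $A^{+d}$, so the retraction does control it. You should restructure the argument along those lines rather than trying to force compactness of a subcomplex of the carrier.
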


While Lemma~\ref{lem:special->double cosets separable} could be avoided in the proof of Theorem~\ref{thm:main}, we include it to shed more light on double hyperplane separability.

\begin{proof}[Proof of Lemma~\ref{lem:special->double cosets separable}.]
Let $\widetilde{X}$ be the universal cover of the special cube complex $X$ with $\pi_1X=G$ and finitely many hyperplanes. Let $\widetilde{A},\widetilde{B}\subset \widetilde{X}$ be the hyperplanes stabilized by $H_1,H_2$. Let $A,B\subset X$ be the projections
of $\widetilde{A},\widetilde{B}$. Without loss of generality we may assume that $A$ is $d$--locally finite for all $d$. Let $\widetilde{v}$ be a base vertex of $N(\widetilde{A})$. Choose a path $\widetilde{\rho}\rightarrow \widetilde{X}$ starting at $\widetilde{v}$ and ending with an edge $\widetilde{e}$ dual to $\widetilde{B}$. Let $v, e$ be the projections of $\widetilde{v}, \widetilde{e}$ to $N(A),N(B)$. Then $\widetilde{\rho}$ projects to a path $\rho$ that starts at $v$ and ends with $e$. The elements of $H_1H_2$ are represented by closed paths of the form $\alpha \rho \beta \rho^{-1}$, where $\alpha,\beta$ are closed paths in $N(A),N(B)$ based at $v$ and the endpoint of $\rho$. Let $\gamma\rightarrow X$ be a closed path based at $v$ representing an element outside $H_1H_2$. We want to find a finite cover $\widehat{X}$ of $X$, where the based lifts of $\gamma$ and each path $\alpha \rho \beta \rho^{-1}$ above have distinct endpoints. Equivalently, we want the based lift of $\gamma \rho$ and each lift of $\rho$ starting at the preimage of $v$ in the based elevation of $N(A)$ to end with edges dual to distinct elevations of $B$. Here a \emph{based lift} or \emph{elevation} is a lift or elevation where $v$ lifts to a specified basepoint of $\widehat{X}$.

Suppose that $\rho$ and $\gamma\rho$ have length $\leq d$. By Lemma~\ref{lem:special->inj_rad} we can assume that $A$ has injectivity radius $>d$. Then the quotient $A^{+d}=H_1\backslash\widetilde{A}^{+d}$ embeds into $X$. Since $A$ is $d$--locally finite, there are finitely many hyperplanes in $A^{+d}$.
Applying Theorem~\ref{thm:Artin} to $A^{+d}$, let $A^{+d}\rightarrow R(A^{+d})$ be the local isometry into the Salvetti complex $R(A^{+d})$ of the right-angled Artin group with generators corresponding to hyperplanes in $A^{+d}$. Apply Theorem~\ref{thm:canonical_completion_and_retraction} to the induced local isometry $R(A^{+d})\rightarrow R(X)$. Consider its canonical completion $\widehat{R(X)}\rightarrow R(X)$ and the retraction $\widehat{R(X)}\rightarrow R(A^{+d})$. Take the pullback of the cover $\widehat{R(X)}\rightarrow R(X)$ to $\widehat{X}\rightarrow X$. We now verify that $\widehat{X}$ is the required cover.

Let $\widehat{A}\subset \widehat{X}$ be an elevation of $A$ mapping to $R(A^{+d})\subset \widehat{R(X)}$.
Let $\widehat{B}, \widehat{B}'\subset \widehat{X}$ be hyperplanes dual to the last edges $\widehat{e},\widehat{e}'\subset \widehat{A}^{+d}$ of lifts of $\rho, \gamma\rho$ starting at some lifts of $v$ in $N(\widehat{A})$. Since $\gamma$ represents an element outside $H_1H_2$, the hyperplanes in
$\widetilde{X}$ dual to the last edges of any lifts of $\rho, \gamma\rho$ starting at the $H_1$ orbit of $\widetilde{v}$ are distinct.
Hence the hyperplanes in $A^{+d}$ dual to the projections of $\widehat{e},\widehat{e}'$ are distinct.
Hence the projections of these hyperplanes to $R(A^{+d})$ are also distinct. The retraction $\widehat{R(X)}\rightarrow R(A^{+d})$ shows that hyperplanes $\widehat{T}_B, \widehat{T}'_B\subset\widehat{R(X)}$ containing these projections are also distinct. Since $\widehat{B}, \widehat{B}'$ map to $\widehat{T}_B, \widehat{T}'_B$, they are distinct as well.
\end{proof}

The proof of Lemma~\ref{lem:special->double cosets separable} also gives the following.

\begin{cor}
\label{cor:removing_intersection}
Let $X$ be a special cube complex with finitely many codim--$2$--hyperplanes. Let $A, B\subset X$ be hyperplanes and let $Q$ be a component of $A\cap B$. There is a finite cover $\widehat{X}\rightarrow X$ with the following property. If elevations $\widehat{A},\widehat{B}\subset \widehat X$ of $A,B$ intersect along an elevation of $Q$, then $\widehat A\cap \widehat B$ projects entirely to $Q$.
\end{cor}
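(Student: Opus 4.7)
The plan is to adapt the canonical completion-retraction argument of Lemma~\ref{lem:special->double cosets separable} to a compact subcomplex of $X$ that captures the codim--$2$--hyperplane structure at $Q$.

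First, by Lemma~\ref{lem:special->inj_rad} I pass to a finite cover of $X$ in which both $A$ and $B$ have injectivity radius greater than $d$; in such a cover the carriers $N(A), N(B)$, and $N(Q_j)$ for each of the finitely many components $Q = Q_0, Q_1, \ldots, Q_k$ of $A\cap B$ all embed. Since any finite intersection of finite covers is still a finite cover, it suffices to find, for each $i\geq 1$, a finite cover $\hat X_i$ which ``separates'' $Q$ from $Q_i$ in the desired sense, and then form $\hat X = \bigcap_i \hat X_i$.

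Fix $i\geq 1$. I form a compact subcomplex $K\subset X$ containing $N(Q)$, the embedded carrier $N(Q_i)$, and all combinatorial paths of length $\leq d$ in $N(A)$ and in $N(B)$ joining $N(Q)$ to $N(Q_i)$. After a subdivision as in the remark following Theorem~\ref{thm:canonical_completion_and_retraction}, the inclusion $K\hookrightarrow X$ is a local isometry of a compact cube complex into a special cube complex. Applying Theorem~\ref{thm:canonical_completion_and_retraction} then produces a finite cover $\hat X_i\to X$ to which $K$ lifts, together with a canonical retraction $r\colon \hat X_i\to K$. The critical property of $r$ is that every hyperplane of $\hat X_i$ that meets $K$ is mapped into itself; combined with the explicit structure $N(Q) = Q\times I\times I$, $N(Q_i) = Q_i\times I\times I$ and the fact that $Q$ and $Q_i$ are disjoint codim--$2$--hyperplanes of $X$, this forces any codim--$2$--component of $\hat A\cap \hat B$ within distance $d$ of a chosen elevation of $Q$ in $K$ to itself project to $Q$. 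Repeating for each $i$ and intersecting the resulting covers yields the desired $\hat X$.

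The main obstacle is verifying that the canonical retraction $r$ indeed produces this separation at the codim--$2$--level rather than merely at the hyperplane level: one has to trace carefully how $r$ interacts with the components of $\hat A\cap\hat B$ passing through $K$ and check that no component over $Q_i$ can coexist with a component over $Q$ inside $r(\hat A\cap\hat B)\subset Q\cup Q_i$. A cleaner alternative is to replace the direct retraction argument by an application of Lemma~\ref{lem:special->double cosets separable} to specific cosets $H_A g H_B$ corresponding to $Q_i$-paths, which in turn requires establishing $d$-local finiteness of $A$ or $B$; the latter follows from the finite codim--$2$--hyperplane hypothesis together with the local isometry $X\to R(X)$ of Theorem~\ref{thm:Artin} and compactness of the Salvetti complex $R(X)$.
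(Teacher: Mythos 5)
Your second approach is close to the paper's: Przytycki--Wise prove the corollary by directly reusing the canonical completion--retraction argument from Lemma~\ref{lem:special->double cosets separable}, replacing $\rho$ by a base vertex $v\in N(Q)$ and taking $\gamma$ to be the concatenation of paths $\alpha,\beta$ running from $v$ to a vertex $v'\in N(Q')$ inside $N(A)$ and $N(B)$ respectively, where $Q'\ne Q$ is another component of $A\cap B$. The crucial observation you are missing is that with $\rho$ trivial, the retraction step only passes through $A^{+0}=N(A)$, so \emph{$0$--local finiteness} of $A$ suffices --- and $0$--local finiteness \emph{is} an immediate consequence of the finitely-many-codim--$2$--hyperplanes hypothesis. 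Your claim that ``$d$--local finiteness follows from the finite codim--$2$--hyperplane hypothesis together with the local isometry $X\to R(X)$ and compactness of $R(X)$'' is not valid: compactness of $R(X)$ controls orbits under $\mathrm{Stab}_{\pi_1 R(X)}(T_A)$ in $\widetilde{R(X)}$, but $\mathrm{Stab}_{\pi_1 X}(A)$ may have infinite index there, and in the non-compact (sparse) cube complexes to which this corollary is actually applied, $A^{+d}$ can contain infinitely many $\mathrm{Stab}(A)$--orbits of hyperplanes for $d>0$. So invoking Lemma~\ref{lem:special->double cosets separable} as a black box does not work; one needs the sharpened version of its proof, which is exactly what the paper supplies.

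Your first approach has an additional gap: you propose building a compact subcomplex $K$ containing the carriers $N(Q)$ and $N(Q_i)$, but $X$ is not assumed compact --- only that it has finitely many hyperplanes and codim--$2$--hyperplanes --- so $N(Q)$ and $N(Q_i)$ need not be compact, and no compact $K$ can contain them. Even setting that aside, you would need $K\hookrightarrow X$ to be a local isometry (i.e.\ $K$ locally convex) before Theorem~\ref{thm:canonical_completion_and_retraction} applies, and the ad hoc union of carriers and short paths you describe has no reason to be locally convex; this convexity issue, together with the codim--$2$ bookkeeping you flag, is precisely why the paper instead routes the argument through $N(A)\to R(N(A))\to R(X)$ as in Lemma~\ref{lem:special->double cosets separable}.
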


\begin{proof}
Choose a component $Q'$ of $A\cap B$ distinct from $Q$. Let $e,e'$ be edges in $N(Q),N(Q')$ dual to $B$. Note that $e,e'$ are dual to distinct hyperplanes in $N(A)$. Consider closed paths $\gamma=\alpha\beta$ where $\alpha,\beta$ are paths in $N(A),N(B)$, and moreover $\alpha$ starts with $e$ and ends with $e'$. We need to find a cover $\widehat{X}$ where such paths $\gamma$ do not lift. In other words, the extremal edges of any lift of $\alpha$ to $\widehat{X}$ are dual to distinct elevations of $B$.

Since there are finitely many codim--$2$--hyperplanes in $X$, the carrier $N(A)$ has finitely many hyperplanes. In other words, the hyperplane $A$ is $0$--locally finite. We repeat the construction from the proof of Lemma~\ref{lem:special->double cosets separable} with $d=0$ to obtain the cover $\widehat{X}$. Let $\widehat A\subset \widehat X$ be an elevation of $A$ mapping to $R(N(A))\subset \widehat{R(X)}$. Then any lifts of $e,e'$ to $N(\widehat A)$ are dual to distinct hyperplanes in $\widehat X$. Replacing $\widehat{X}$ with a further cover that is a regular cover of $X$, we obtain the same property for all elevations of $A$.
\end{proof}

When we have a map $N(Q)\rightarrow N(A)$, a path in $N(A)$ \emph{starting (resp.\ ending) at a vertex $v$ of $N(Q)$} is a path that starts (resp.\ ends) at the image of $v$ in $N(A)$.

\begin{defin}
\label{def:double_injectivity}
Let $A\neq B$ be hyperplanes in a cube complex $X$ and let $\mathcal{Q}$ be a family of components of $A\cap B$. Hyperplanes $A,B$ have \emph{double injectivity radius} $>d$ at $\mathcal{Q}$ if all the paths of length $\leq 2d$ in $X$ starting at $N(A)$ and ending at $N(B)$ have the following property: They are path-homotopic to a concatenation at a vertex of $N(\mathcal{Q})$ of a pair of paths in $N(A)$ and $N(B)$. In particular $A\cap B=\mathcal{Q}$. In other words, if elevations $N(\widetilde{A}),N(\widetilde{B})$ of $N(A),N(B)$ to the universal cover of $X$ are at distance $\leq 2d$, then $\widetilde{A}\cap\widetilde{B}$ is nonempty and projects to $\mathcal{Q}$. We refer the reader to Figure~\ref{fig:Extra1}.
\end{defin}

\begin{figure}
\label{fig:Extra1}
\begin{center}
\includegraphics[width=.35\textwidth]{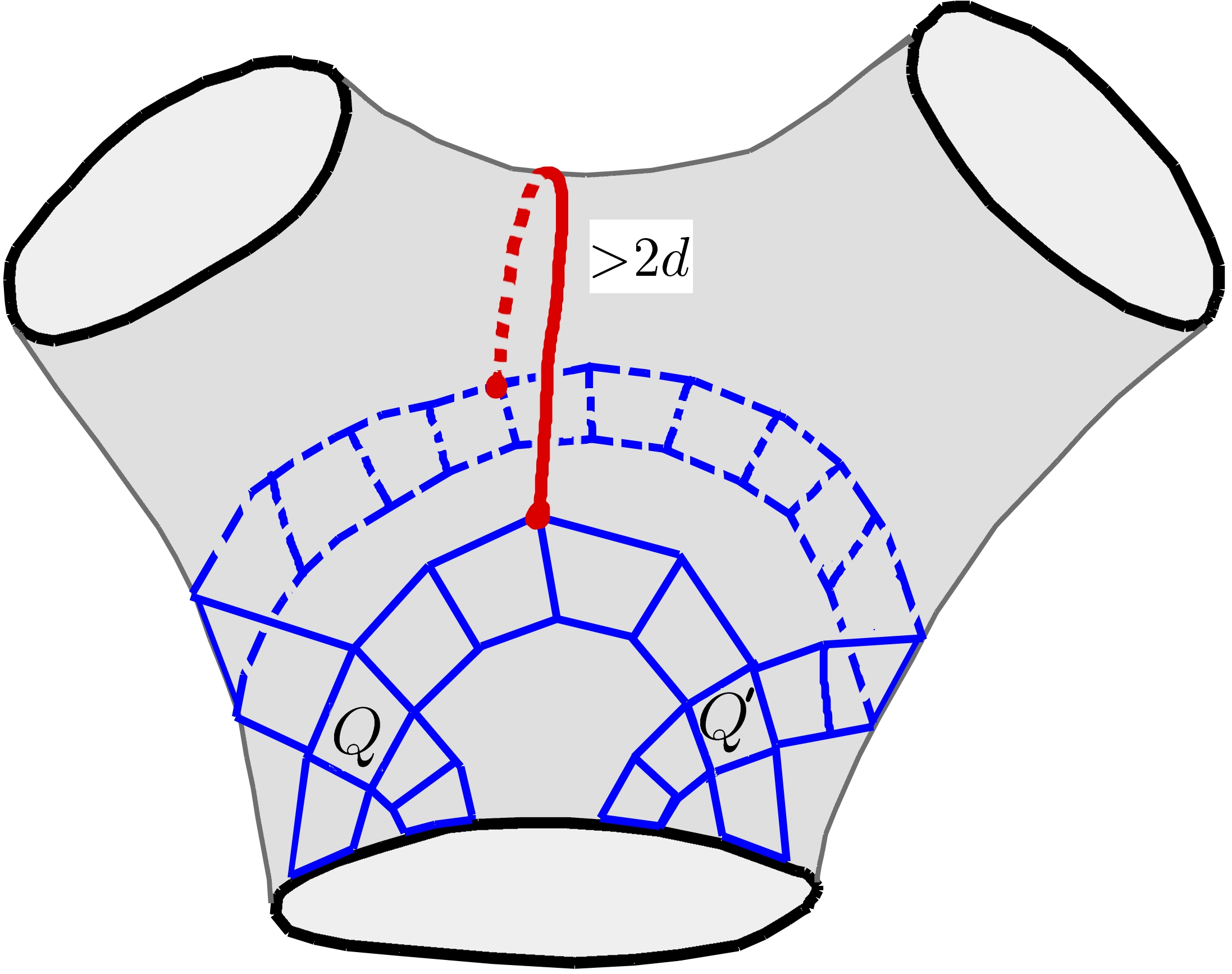}
\end{center}
\caption{Double injectivity radius $>d$ at $\mathcal Q=\{Q,Q'\}$.}
\label{fig:Extra1}
\end{figure}

\begin{lem}
\label{lem:special->double_inj_radius}
Let $X$ be a special cube complex with finitely many codim--$2$--hyperplanes. Let $A, B\subset X$ be hyperplanes and let $Q$ be a component of $A\cap B$. For each $d$ there is a finite cover $\widehat{X}\rightarrow X$ with the following property. If elevations $\widehat{A},\widehat{B}\subset \widehat{X}$ of $A,B$ intersect along an elevation of $Q$, then they have double injectivity radius $>d$ at the family of components of $\widehat{A}\cap \widehat{B}$ projecting to $Q$.
\end{lem}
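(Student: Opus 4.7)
The plan is to mirror Lemma~\ref{lem:special->inj_rad} and Lemma~\ref{lem:special->double cosets separable}: first perform reductions using already-established lemmas so that the only remaining obstruction is ``short distance between carriers forces actual intersection'', then realise this as a double coset separation that can be handled in the Salvetti complex.

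First I would pass to a finite cover of $X$ in which both $A$ and $B$ have injectivity radius $>d$ (apply Lemma~\ref{lem:special->inj_rad} to $A$ and to $B$ and intersect the two resulting finite index subgroups) and in which any two elevations of $A,B$ whose intersection contains an elevation of $Q$ have their entire intersection projecting to $Q$ (apply Corollary~\ref{cor:removing_intersection}). Without loss of generality $X$ already has these features. It then suffices to produce one further finite cover $\widehat X\to X$ such that whenever elevations $\widehat A,\widehat B\subset\widehat X$ have carriers at combinatorial distance at most $2d$, we actually have $\widehat A\cap\widehat B\ne\emptyset$. Together with the first reduction, such an intersection then lies in the full preimage $\mathcal Q$ of $Q$; the injectivity radius bound lets any length-$\le 2d$ path from $N(\widehat A)$ to $N(\widehat B)$ be homotoped into a concatenation, at a vertex of $N(\mathcal Q)$, of paths in $N(\widehat A)$ and $N(\widehat B)$, as required.

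In the universal cover, fix $\widetilde A,\widetilde B\subset\widetilde X$ intersecting along a lift $\widetilde Q$ of $Q$, and set $H_A=\mathrm{Stab}(\widetilde A)$, $H_B=\mathrm{Stab}(\widetilde B)$. As in the proof of Lemma~\ref{lem:special->double cosets separable}, a translate $g\widetilde B$ meets $\widetilde A$ exactly when $g\in H_A H_B$, so the task reduces to producing $F\le\pi_1 X$ of finite index such that no $g\in F\setminus H_A H_B$ brings $gN(\widetilde B)$ within distance $2d$ of $N(\widetilde A)$. I would handle all such bad $g$ simultaneously via the Salvetti argument of Lemma~\ref{lem:special->inj_rad}: take the local isometry $X\to R=R(X)$ of Theorem~\ref{thm:Artin}, with $A,B,Q$ mapping to $T_A,T_B,T_Q$ in the compact Salvetti complex $R$. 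Paths of length $\le 2d$ in $R$ from $N(T_A)$ to $N(T_B)$, closed up at a basepoint $\bar v\in N(T_Q)$ by paths in $N(T_A)$ and $N(T_B)$, yield a finite set of elements of $\pi_1 R$ (finite because $R$ is compact); those lying outside $\pi_1 T_A\cdot\pi_1 T_B$ form a finite ``bad'' set $\mathcal F$. Since $R$ is compact special, every hyperplane in it is $d$-locally finite for all $d$, and Lemma~\ref{lem:special->double cosets separable} applied in $R$ separates each element of $\mathcal F$ from the double coset $\pi_1 T_A\cdot\pi_1 T_B$; intersecting the finitely many resulting finite index subgroups of $\pi_1 R$ gives one $F_R\le\pi_1 R$ that works for all of $\mathcal F$ at once, and I take $\widehat X$ to be the pullback of the cover of $R$ corresponding to $F_R$ along $X\to R$.

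The main subtlety I expect lies in the verification that any bad $g\in\pi_1 X\setminus H_A H_B$ bringing $gN(\widetilde B)$ within $2d$ of $N(\widetilde A)$ actually descends to an element of $\pi_1 R$ outside $\pi_1 T_A\cdot\pi_1 T_B$, so that the Salvetti-level separation above eliminates $g$. This uses the convex embedding $\widetilde X\hookrightarrow\widetilde R$ from Theorem~\ref{thm:Artin}: combinatorial distances of hyperplane carriers and the property of having empty intersection transfer from $\widetilde X$ to $\widetilde R$, and the injectivity radius reduction guarantees that closing short paths up through $N(A)$ and $N(B)$ is unambiguous. Once this is in place, the pullback cover $\widehat X$ has the required double injectivity radius property at the full preimage of $Q$.
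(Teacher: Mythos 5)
Your approach matches the paper's: reduce via Corollary~\ref{cor:removing_intersection}, work in the Salvetti complex $R(X)$, invoke double coset separability of hyperplane subgroups there, pull the cover back, and verify using the convex embedding $\widetilde X\hookrightarrow\widetilde R$. There is, however, one inaccurate intermediate claim worth flagging. You assert that ``a translate $g\widetilde B$ meets $\widetilde A$ exactly when $g\in H_A H_B$'' and deduce that the task reduces to avoiding $g\in F\setminus H_AH_B$ near $N(\widetilde A)$. The ``only if'' direction of that claim is false when $A\cap B$ is disconnected: components of $A\cap B$ are in bijection with double cosets $H_A g H_B$ having $\widetilde A\cap g\widetilde B\ne\emptyset$, so $g\widetilde B$ can meet $\widetilde A$ along a different lift of $Q$ with $g\notin H_AH_B$. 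The paper avoids this by an additional reduction after Corollary~\ref{cor:removing_intersection}: pass to a regular cover and quotient by the group permuting the components of $\widehat A\cap\widehat B$, so that one may assume $A\cap B=Q$ is connected, making your claim true. Your proof omits this step.

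That said, your actual Salvetti-level construction does not rely on the inaccurate claim: you separate exactly the $g$ whose image in $\pi_1 R$ lies outside $\pi_1T_A\cdot\pi_1T_B$, and those are precisely the $g$ with $g\widetilde{T_B}\cap\widetilde{T_A}=\emptyset$. By the convexity of $\widetilde X$ in $\widetilde R$, the remaining $g$ already give $\widetilde A\cap g\widetilde B\ne\emptyset$, and (since $g\in\pi_1\widehat X$ so $g\widetilde B$ covers the same $\widehat B$) the Corollary~\ref{cor:removing_intersection} reduction guarantees this intersection lies over $Q$ --- whether or not $g\in H_AH_B$. So the end result is correct, but the stated reduction to ``$g\in H_AH_B$'' is an overshoot that your construction does not in fact achieve; you should either adopt the paper's reduction to connected $A\cap B$, or restate the intermediate step as ``force $\widetilde A\cap g\widetilde B\ne\emptyset$'' rather than ``force $g\in H_AH_B$.'' Also, the ``finite set of elements of $\pi_1R$'' should be ``finitely many double cosets $\pi_1T_A\,[\gamma]\,\pi_1T_B$,'' since the closing-up paths $\alpha,\beta$ are unbounded.
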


Note that this property is preserved when passing to further covers. In particular, we can arrange that it holds for all $A,B$ and $Q$ simultaneously.

\begin{proof}
By Corollary~\ref{cor:removing_intersection} there is a finite cover $\widehat{X}$ of $X$ where the intersection of elevations $\widehat{A}\cap \widehat{B}$ is nonempty and projects to $Q$. Then passing to a regular cover and quotienting by the group permuting the components of $\widehat{A}\cap \widehat{B}$ reduces the situation to the case where $A\cap B$ is connected, i.e.\ $A\cap B=Q$.

Applying Theorem~\ref{thm:Artin}, let $X\rightarrow R$ be the local isometry into the Salvetti complex $R=R(X)$ of a finitely generated right-angled Artin group $F$. Let $T_A, T_B\subset R$ denote the hyperplanes that are the images of $A,B$ and let $T_Q\subset R$ denote the codim--$2$--hyperplane $T_A\cap T_B$. Consider paths $\gamma\rightarrow R$ of length $\leq 2d$ starting at $N(T_A)$ and ending at $N(T_B)$ but with $\gamma$ not path-homotopic to a concatenation at $N(T_Q)$ of a pair of paths in $N(T_A), N(T_B)$. Since $R$ is compact, there are finitely many such paths $\gamma$. Let $\mathcal{F}$ denote the family of conjugacy classes of elements of $F$ determined by the closed paths $\alpha\beta \gamma^{-1}$, with $\alpha$ in $N(T_A)$ and $\beta$ in $N(T_B)$ concatenated at $N(T_Q)$. Then $\mathcal{F}$ is a union of classes determined by finitely many nontrivial double cosets of the form $H_1H_2g$, where $H_1=\pi_1T_A,H_2=\pi_1T_B$. Since double cosets of hyperplane subgroups in $F$ are separable (a case of Lemma~\ref{lem:special->double cosets separable}, proved in \cite[Cor 9.4]{HW}), the group $F$ has a finite index subgroup $\widehat{F}$ disjoint from the set of elements whose classes lie in $\mathcal{F}$. Let $\widehat{R}\rightarrow R$ be the finite cover corresponding to $\widehat{F}\subset F$. Any intersecting elevations $\widehat{T}_A,\widehat{T}_B\subset \widehat{R}$ of $T_A,T_B$ have double injectivity radius $>d$ at $\widehat{T}_A\cap \widehat{T}_B$. Let $\widehat{X}\rightarrow X$ be the pullback of $\widehat{R}\rightarrow R$.

We show that $\widehat{X}$ has the desired property. Let $\widehat{A}, \widehat{B}\subset \widehat{X}$ be intersecting elevations of $A,B$. Let $\widetilde{A}, \widetilde{B}$ be their further elevations to the universal cover $\widetilde{X}$ of $X$ at distance $\leq 2d$. The universal cover $\widetilde{X}$ embeds as a convex subcomplex of the universal cover $\widetilde{R}$ of $R$.
The hyperplanes $\widetilde{T}_A, \widetilde{T}_B\subset \widetilde{R}$ containing $\widetilde{A}, \widetilde{B}$ intersect, since their images $\widehat{T}_A,\widehat{T}_B\subset \widehat{R}$ have double injectivity radius $>d$ at $\widehat{T}_A\cap \widehat{T}_B$. By Helly's theorem \cite[Thm~2.2]{Rol} the combinatorial convex hull of a pair points in intersecting hyperplanes contains an intersection point. Hence the hyperplanes $\widetilde{A}$ and $\widetilde{B}$ intersect as well.
\end{proof}

\section{Background on cubical small cancellation}
\label{sec:small_cancellation}
In this section we review the main theorem of cubical small cancellation \cite{Hier}. It will be used in the proof of Theorem~\ref{thm:specialization} (Specialization).

\subsection{Pieces}
Let $X$ be a nonpositively curved cube complex. Let $\{Y_i\rightarrow X\}$ be a collection of local isometries of nonpositively curved
cube complexes. The pair $\langle X|\{Y_i\rightarrow X\}\rangle$, or briefly $\langle X|Y_i\rangle$, is a \emph{cubical presentation}.
Its \emph{group} is $\pi_1X/ \nclose{\{\pi_1Y_i\}}$ which equals $\pi_1X^*$ where $X^*$ is obtained from $X$ by attaching cones along the $Y_i$. Let $\overline{X}=\nclose{\{\pi_1Y_i\}}\backslash \widetilde{X}$ denote the cover of $X$ in the universal cover $\widetilde{X}^*$ of $X^*$.

An \emph{abstract cone-piece in $Y_i$} of $Y_j$ is the intersection $P=\widetilde{Y}_i\cap\widetilde{Y}'_j$ of some elevations $\widetilde{Y}_i, \widetilde{Y}'_j$ of $Y_i,Y_j$ to the universal cover $\widetilde{X}$ of $X$. In the case where $j=i$
we require that the elevations are distinct in
the sense that for the projections $P\rightarrow Y_i,Y_j$ there is no automorphism $Y_i\rightarrow Y_j$ such that the following diagram commutes:
$$
\begin{array}{ccc}

  P & \rightarrow & Y_i \\

  \downarrow& \swarrow & \downarrow \\

  Y_j & \rightarrow & X

\end{array}
$$
Note that an abstract cone-piece \emph{in $Y_i$} actually lies in $\widetilde{Y}_i$.

Let $\widetilde{A}$ be a hyperplane in $\widetilde{X}$ disjoint from $\widetilde{Y}_i$.
An \emph{abstract wall-piece in $Y_i$} is the intersection $\widetilde{Y}_i\cap N(\widetilde{A})$.
An \emph{abstract piece} is an abstract cone-piece or an abstract wall-piece.

A path $\alpha\rightarrow Y_i$ is a \emph{piece in $Y_i$}, if it lifts to $\widetilde{Y}_i$ into an abstract piece in $Y_i$.
We then denote by $|\alpha|_{Y_i}$ the combinatorial distance between the endpoints of a lift of $\alpha$ to $\widetilde{Y}_i$, i.e.\ the length of a geodesic path in $Y_i$ path-homotopic to $\alpha$.

The cubical presentation $\langle X|Y_i\rangle$ satisfies the \emph{$C'(\frac{1}{n})$ small cancellation condition},
if $|\alpha|_{Y_i} < \frac{1}{n}\systole{Y_i}$ for each piece $\alpha$ in $Y_i$.
Recall that $\systole{Y_i}$ denotes the minimum of the lengths of essential closed paths in $Y_i$.

\subsection{Ladder Theorem}

A \emph{disc diagram} $D$ is a compact contractible $2$--complex with a fixed embedding in $\R^2$. Its \emph{boundary path} $\partial_\p D$ is the attaching map of the cell at $\infty$. The diagram is \emph{spurless} if $D$ does not have a \emph{spur}, i.e.\ a vertex contained in only one edge. If $X$ is a combinatorial complex, a \emph{disc diagram in $X$} is a combinatorial map of a disc diagram into $X$.

Let $D\rightarrow \widetilde{X}^*$ be a disc diagram with a boundary path $\partial_\p D\rightarrow \overline{X}$. Note that the $2$--cells of $\widetilde{X}^*$ are squares or triangles, where the latter have exactly one vertex at a cone point. The triangles in $D$ are grouped together into \emph{cone-cells} around these cone points. The \emph{complexity} of $D$ is the pair of numbers $(\#$ cone-cells of $D,\  \# $ squares of $D)$, with lexicographic order.

In addition to spurs, there are two other types of positive curvature features in $\partial_\p D$: \emph{shells} and \emph{cornsquares}. A cone-cell $C$ adjacent to $\partial_\p D$ is a \emph{shell} if $\partial C\cap \partial_\p D$ (\emph{outer path}) is connected and its complement in $\partial C$ (\emph{inner path}) is a concatenation of $\leq 6$ pieces. A pair of consecutive edges of $\partial_\p D$ is a \emph{cornsquare} if the
carriers of their dual hyperplanes intersect at a square and surround a square subdiagram, i.e.\ a subdiagram all of whose $2$--cells are squares.
A \emph{ladder} is a disc diagram that is the concatenation of cone-cells and rectangles with cone-cells or spurs at extremities, as
in Figure~\ref{fig:SmallCancellationThings}. A single cone-cell is not a ladder, while a single edge is a ladder. The following summarizes the main results of cubical small cancellation theory.

\begin{thm}[{\cite[Thm 3.40]{Hier}}]
\label{thm:ladder}
Assume that $\langle X|Y_i\rangle$ satisfies the $C'(\frac{1}{12})$ small cancellation condition. Let $D\rightarrow \widetilde{X}^*$ be a minimal complexity disc diagram for a closed path $\partial_\p D\rightarrow \overline{X}$. Then one of the following holds.
\begin{enumerate}[(a)]
\item $D$ is a single vertex or a single cone-cell.
\item $D$ is a ladder.
\item $D$ has at least three spurs and/or shells and/or cornsquares. Moreover, if there is no shell or spur, then there must be at least four cornsquares.
\end{enumerate}
\end{thm}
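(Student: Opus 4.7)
The plan is to apply combinatorial Gauss--Bonnet to $D$, showing that every interior vertex and every interior $2$--cell contributes non-positive curvature, so that the total curvature $2\pi = 2\pi\cdot\chi(D)$ must be supplied by boundary features. First I would catalogue the reductions that minimal complexity forces: no bigons between parallel cone-cells (they could be merged, decreasing \# cone-cells), no square--cone absorptions along the boundary, no dual-hyperplane cancellations (two squares sharing two consecutive edges), and no spurs in the interior. Each operation strictly decreases the pair $(\#\text{cone-cells},\#\text{squares})$ lexicographically, so a minimal-complexity diagram avoids all of them.

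Next I would set the angle assignment: each square corner receives $\pi/2$; at each cone-cell $c$, I decompose $\partial c$ into pieces $p_1,\dots,p_k$ (cone-pieces with adjacent cone-cells and wall-pieces with adjacent hyperplane carriers arising from square regions), assign angle $\pi$ at every corner interior to a piece, and angle $(k-2)\pi/k$ at each piece-transition corner. With this choice, an interior cone-cell whose entire boundary is covered by pieces satisfies
\[
\kappa(c)=2\pi - k\pi + k\cdot\tfrac{(k-2)\pi}{k}=0.
\]
The essential input of $C'(\tfrac{1}{12})$ is that each piece has combinatorial length strictly less than $\systole{Y_i}/12$; since an interior cone-cell corresponds to an essential loop in $Y_i$ of length $\geq\systole{Y_i}$, its boundary must decompose into $k\geq 13$ pieces. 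Consequently the piece-transition angles are large, and the flag link condition in the nonpositively curved complex $X$ gives $\kappa(v)\leq 0$ at every non-cone interior vertex, while piece-transition vertices have angle sum $\geq 2\pi$ because they are incident to at least two cone-cells or a cone-cell and a hyperplane carrier each contributing $\geq (k-2)\pi/k$.

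With all interior contributions non-positive, the boundary must carry $\geq 2\pi$ of positive curvature, concentrated in three feature types: spurs (contributing $\kappa=\pi$), cornsquares (contributing $\pi/2$ after resolving the dual-hyperplane square), and shells (boundary cone-cells whose inner path is a concatenation of $\leq 6$ pieces, contributing at most $\pi$ depending on the inner piece count). If $D$ is a single vertex or single cone-cell we are done. Otherwise, suppose $D$ has fewer than three features: a case-by-case analysis (zero, one, or two features combined in any way) shows the total boundary curvature is $<2\pi$ except in the topologically linear configuration, which forces $D$ to be a ladder with exactly two shells at its extremities accounting for the full $2\pi$. Hence either $D$ is a ladder or it has $\geq 3$ features; and since each cornsquare is capped at $\pi/2$, a configuration with no shells and no spurs needs at least $4$ cornsquares to reach $2\pi$.

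The main obstacle will be the cone-cell bookkeeping in two directions. On the interior side, I must verify that in a minimal diagram every boundary edge of an interior cone-cell genuinely lies in a piece, so that the count $k\geq 13$ applies; this requires ruling out the configuration where two adjacent cone-cells share a sub-path long enough to violate $C'(\tfrac{1}{12})$ were they not combined by the minimality reductions. On the square--cone interface, a contiguous sub-path of $\partial c$ inside a square region must be shown to be a genuine wall-piece with a single dual hyperplane --- this uses the non-self-intersection of hyperplanes in $\widetilde X$ together with the reductions preventing a hyperplane from re-entering the same cone-cell. Once these piece-counting guarantees are in place, the Gauss--Bonnet bookkeeping and the classification of positive-curvature features proceed cleanly.
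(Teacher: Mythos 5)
This statement is imported as a black box: the paper cites it directly as \cite[Thm 3.40]{Hier} and gives no proof of its own, so there is no internal argument to compare against. Your Gauss--Bonnet strategy is nevertheless the same broad approach used in the cited source, so the comparison is worth making against what the cited proof would actually require.

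The main gap is in your angle assignment and the interior-curvature verification. You assign angle $(k-2)\pi/k$ at piece-transition corners of a cone-cell, where $k$ is that cone-cell's piece count. But $k$ is a property of the cone-cell, not of the vertex, so at a vertex shared by two cone-cells with different piece counts the assignment is ambiguous; and even setting that aside, the nonpositivity check at interior vertices fails as stated: with $k\geq 13$ each transition corner contributes at least $11\pi/13<\pi$, so two adjacent cone-cell corners alone give angle sum at most roughly $1.7\pi<2\pi$, i.e.\ \emph{positive} curvature, and you would need to argue that additional square corners are always present to restore nonpositivity. You assert ``the flag link condition gives $\kappa(v)\leq 0$'' but that condition controls the square part of the link, not the cone-cell corners with your ad hoc angles, so this step is unjustified. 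The actual proof in the cited reference uses a different, carefully globalised angling (the so-called ``split'' angling) precisely to avoid this inconsistency; reproducing that is the heart of the matter, not a bookkeeping detail.

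A second, smaller gap is the ladder dichotomy. You say a case-by-case analysis ``shows the total boundary curvature is $<2\pi$ except in the topologically linear configuration, which forces $D$ to be a ladder.'' This is the hardest case in the cited proof: two shells with inner paths of $\leq 6$ pieces each can in principle account for the full $2\pi$, but one must then show that the region between them is a (possibly degenerate) rectangle of squares joining them, and that any cone-cell sitting between them is again a pseudo-shell on both sides --- which is an inductive argument on the number of cone-cells, not a direct curvature count. Without this the conclusion ``is a ladder'' is asserted rather than derived. Finally, you should make explicit why $\leq 6$ pieces is the right threshold: it is exactly the value for which $C'(1/12)$ forces the inner path to have length $<\systole{Y_i}/2$, hence strictly less than half of $\partial C$, so that a shell has a long outer path and contributes genuine positive curvature --- a point your sketch gestures at but does not nail down.
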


\begin{figure}
\begin{center}
\includegraphics[width=\textwidth]{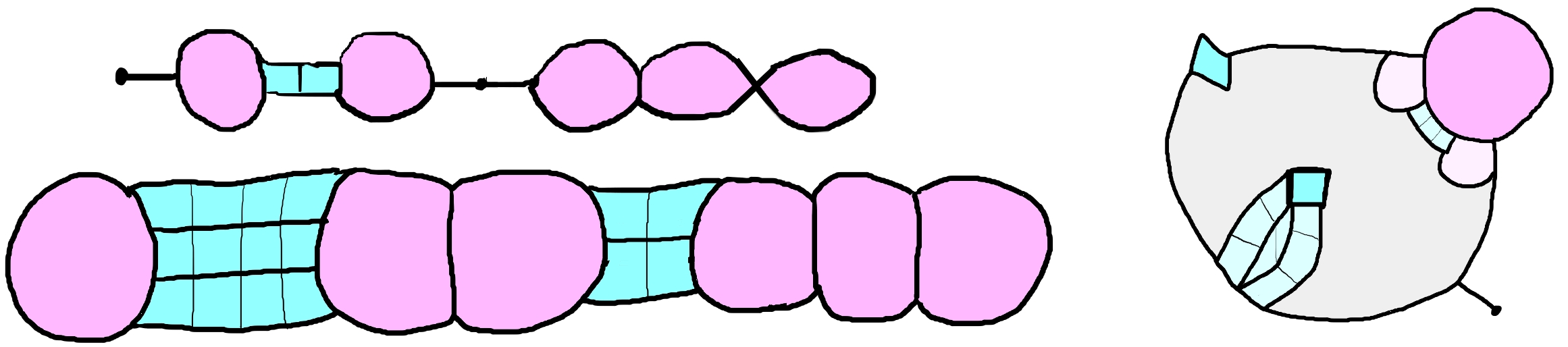}
\end{center}
\caption{Two ladders are on the left. On the right are two cornsquares, a spur, and a shell within a disc diagram.
\label{fig:SmallCancellationThings}
}
\end{figure}

The following consequence allows us to identify $Y_i$ with any of its lifts to $\overline{X}$.

\begin{cor}
\label{cor:Yembed}
Let $\langle X|Y_i\rangle$ satisfy the $C'(\frac{1}{12})$ small cancellation condition. Then each $Y_i$ lifts to an embedding in $\overline{X}$.
\end{cor}
\begin{proof}
We argue by contradiction. Let $\gamma\rightarrow Y_i$ be a path of minimal length that is not a closed path but projects to a closed path $\overline{\gamma}\rightarrow\overline{X}$. Let $\overline{v}$ be the vertex of $\overline{\gamma}$ which is the projection of the endpoints of $\gamma$.
Let $D\rightarrow \overline{X}$ be a disc diagram with $\partial_\p D=\overline{\gamma}$ of minimal complexity among all such paths $\gamma$. Then the boundary $\partial C$ of any cone-cell $C$ in $D$ is essential in $Y_j$ into which it maps.

If in $\overline \gamma-\overline v$ there were two consecutive edges forming a cornsquare, we could homotope $D$ so that there is a square at that exact corner \cite[Lem 2.6]{Hier}. That square would lift to $Y_i$ and we could homotope $\gamma$ through it to reduce the complexity. The diagram $D$ has no spur. If there is a shell $C$ in $D$ whose outer path is contained in $\overline \gamma-\overline v$, then let $\gamma_{\ds C}=\overline\gamma\cap\partial C$ denote that outer path and suppose that $\partial C$ maps to $Y_j$. If $\gamma_{\ds C}$ is a piece in $Y_j$ of $Y_i$, then this contradicts that $\partial C$ is essential in $Y_i$. Otherwise $j=i$ and there is an identification $Y_j\rightarrow Y_i$ agreeing on $\gamma_{\ds C}$. We then replace inside $\gamma$ the outer path $\gamma_{\ds C}$ by the inner path of $C$ to obtain $\gamma'\rightarrow Y_i$ with the same endpoints as $\gamma$. The projection $\overline{\gamma}'\rightarrow \overline X$ of $\gamma'$ bounds the disc diagram $D-C$ of smaller complexity than $D$, which is a contradiction. Hence by Theorem~\ref{thm:ladder}, the diagram $D$ is a single cone-cell $C$. Since $\partial C$ is essential, the path $\overline\gamma-\overline v$ is not a piece, hence again we can identify $Y_j$ with $Y_i$ along $\overline\gamma-\overline v$ and $\gamma$. But $\overline\gamma-\overline v$ is a closed path in $Y_j$, contradiction.
\end{proof}

\subsection{Small cancellation quotients}
We now prove that in small cancellation quotients we can separate elements from cosets and double cosets. We also prove a convexity result for `extended carriers'.

\begin{lemma}
\label{lem:quasiconvex}
Let $\langle X|Y_i\rangle$ be a cubical presentation with all abstract pieces of uniformly bounded diameter.
Suppose that each $Y_i$ is virtually special with finitely many immersed hyperplanes.
Let $\widetilde{A}\subset \widetilde{X}$ be a hyperplane and let $g\in G-H$, where $G=\pi_1X$ and $H=\mathrm{Stab}(\widetilde{A})$. Then there are finite index subgroups $P_i'\subset P_i=\pi_1Y_i$ such that:
\begin{enumerate}[(1)]
\item
Letting $\overline{X}= \nclose{\{P_i'\}}\backslash\widetilde{X}$, the immersed hyperplane $\overline{A}$ in $\overline{X}$ that is the projection of $\widetilde A$ has no self-intersections and no self-osculations.
\item
Any two points of $N(\overline{A})$ are connected by a geodesic that lies in the union of $N(\overline{A})$ and the translates of $\overline{Y}_i=P'_i \backslash \widetilde{Y}_i$ in $\overline{X}$ intersecting $\overline{A}$.
\item
$\overline{g}\notin\overline{H}$ in the quotient $\overline{G}=G/ \nclose{\{P_i'\}}$.
\end{enumerate}
\end{lemma}

\begin{proof}
Assume that all the abstract pieces have diameter $<d$.
By Lemma~\ref{lem:special_resfin} we can choose $P'_i$ so that $\systole{\overline{Y}_i}\geq 12d$. By Lemma~\ref{lem:special->inj_rad}, we can further choose $P_i'$ so that all the hyperplanes of $\overline{Y}_i$ have injectivity radius $>3d$. We also require that $P_i'\subset P_i$ are characteristic, so that $\langle X|\overline{Y}_i\rangle$ satisfies the $C'(\frac{1}{12})$ small cancellation condition. Note that merely requiring that $P_i'$ is normal in $P_i$ might not suffice, since we need that every automorphism of $Y_i$ respecting the map $Y_i\rightarrow X$ elevates to $\overline{Y}_i$, thus ensuring that in $\widetilde{Y}_i$ there do not appear new abstract cone-pieces in $\overline{Y}_i$ of $\overline{Y}_i$.

We first prove Assertion (2). Let $D\rightarrow\widetilde{X}^*$ be a disc diagram bounded by a geodesic $\alpha$ in the $1$--skeleton of $N(\overline{A})$ and a geodesic $\gamma$ in the $1$--skeleton of $\overline{X}$. We assume that $D$ has minimal complexity among all such disc diagrams with prescribed common endpoints of $\alpha$ and $\gamma$. Then the boundary $\partial C$ of any cone-cell $C$ in $D$ is essential in $\overline{Y}_i$ into which it maps. Hence
$|\partial C|_{\overline{Y}_i}\geq \systole{\overline{Y}_i}\geq 12d$, where $|\partial C|_{\overline{Y}_i}$ denotes the minimal length of a closed path in the free homotopy class of $\partial C$ in $\overline{Y}_i$. Consequently, since the inner path of a shell $C$ is a concatenation of at most $6$ pieces, a geodesic in $\overline{Y}_i$ that is path-homotopic to the inner path of $C$ is shorter than the outer path of $C$.

If in $\partial_\p D= \alpha\gamma$ there are two consecutive edges forming a cornsquare, they cannot both lie in $\alpha$ or both lie in $\gamma$. Otherwise we could homotope $D$ so that there is a square at that exact corner \cite[Lem 2.6]{Hier}. Then we could homotope $\alpha$ or $\gamma$ through that square to reduce the complexity. The diagram $D$ has no spur except possibly where $\alpha$ and $\gamma$ are concatenated.
If there is a shell $C$ in $D$ whose outer path is contained in $\gamma$, then replacing the outer path of $C$ by a geodesic that is path-homotopic to the inner path of $C$ contradicts that $\gamma$ is a geodesic.

Finally, suppose that the outer path of a shell $C$ is contained in $\alpha$. Let $\alpha_{\ds C}=\partial C\cap\alpha$ denote the outer path of $C$ and let $\delta$ denote the inner path of $C$. Let $\widetilde{Y}_i$ denote the universal cover of $\overline{Y}_i$ into which $\partial C$ maps. Consider the copy of $\widetilde{Y}_i$ in $\widetilde{X}$ that contains a lift of $\alpha_{\ds C}$ to $N(\widetilde{A})$. If $\widetilde{A}$ is disjoint from $\widetilde{Y}_i$, then $\alpha_{\ds C}$ is a piece and $\partial C$ is a concatenation of at most $7$ pieces, which contradicts $|\partial C|_{\overline{Y}_i}\geq 12d$.
Otherwise let $\widetilde{A}_i=\widetilde{A}\cap\widetilde{Y}_i$. Hence $\alpha_{\ds C}$ projects into the quotient $N(\overline{A}_i)$ of $N(\widetilde{A}_i)$ in $\overline{Y}_i$. Since the injectivity radius of the hyperplane $\overline{A}_i$ in $\overline{Y}_i$ is $>3d$, the inner path $\delta$ is path-homotopic in $\overline{Y}_i$ to a path $\alpha'_C$ in $N(\overline{A}_i)$. If we choose $\alpha'_C$ to be geodesic, then since it is path-homotopic to the inner path $\delta$, we have
$|\alpha'_C|=|\delta|_{\overline{Y}_i}<|\alpha_{\ds C}|$. This contradicts that $\alpha$ is a geodesic in $N(\overline{A})$.

Thus there can be at most two spurs and/or shells and/or cornsquares in $D$ and these are located where $\alpha$ and $\gamma$ are concatenated.
By Theorem~\ref{thm:ladder}, the disc diagram $D$ is a single cone-cell or ladder. For any of its cone-cells $C$ let $\alpha_{\ds C}=\alpha\cap\partial C, \ \gamma_{\ds C}=\gamma\cap\partial C$. Let $\lambda_{\ds C},\delta_{\ds C}$ denote the remaining, possibly trivial, arcs of $\partial C$. Since $\lambda_{\ds C}, \delta_{\ds C}$ are pieces, we have $|\lambda_{\ds C}|_{\overline{Y}_i}<d$ and $|\delta_{\ds C}|_{\overline{Y}_i}<d$, where $\partial C$ maps to $\overline{Y}_i$. As before, if $\widetilde{A}$ is disjoint from $\widetilde{Y}_i$ containing a lift of $\alpha_{\ds C}$ to $N(\widetilde{A})$, then $\alpha_{\ds C}$ is a piece. Since $|\partial C|_{\overline{Y}_i}\geq 12d$, this contradicts that $\gamma_{\ds C}$ is a geodesic. Hence $\widetilde{A}$ intersects $\widetilde{Y}_i$, which proves Assertion (2).

\begin{figure}
\label{fig:Extra2}
\begin{center}
\includegraphics[width=.5\textwidth]{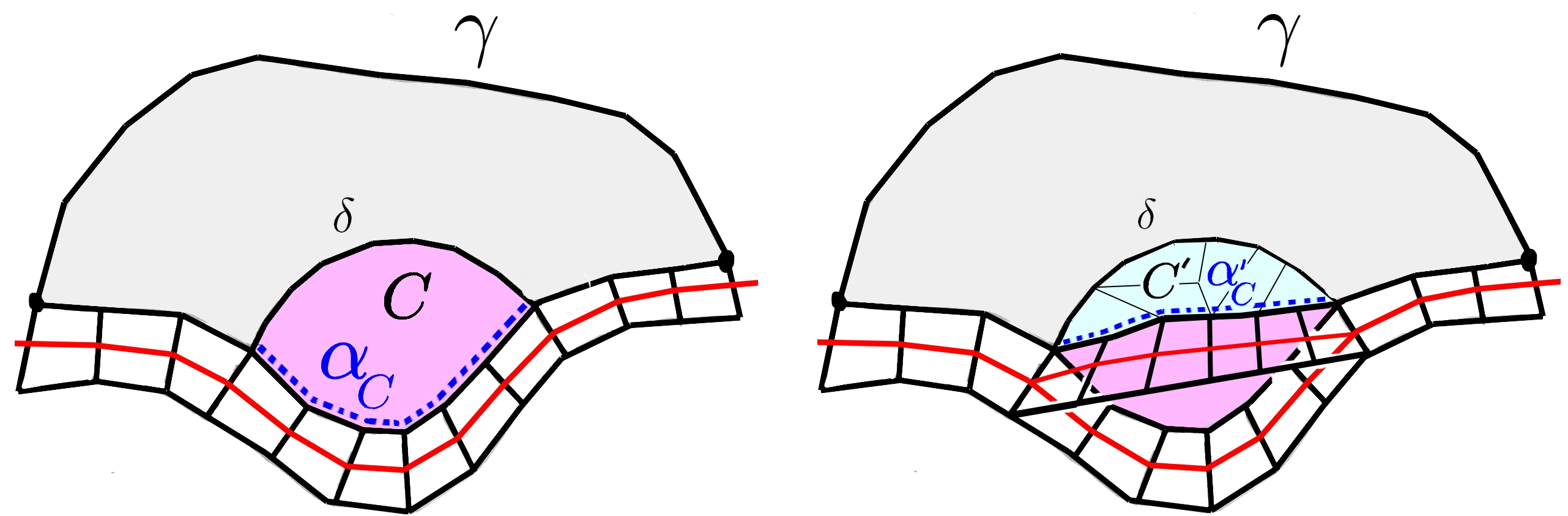}
\end{center}
\caption{The shell $C$ is surrounded by a hyperplane and a short inner path $\delta$.
We can thus replace $C$ by a square diagram as on the right.}
\label{fig:Extra2}
\end{figure}

For Assertion (3), let $A$ be the immersed hyperplane in $X$ that is the projection of $\widetilde{A}$. Let $\gamma\rightarrow X$ be a minimal length path starting and ending at $N(A)$ such that a concatenation of $\gamma$ with a path in $N(A)$ represents the conjugacy class of $g$. We increase $d$ so that $d\geq|\gamma|$, and we then choose $P_i'$ as before. If $\overline{g}$ lies in $\overline{H}$, then a lift of $\gamma$ to $\overline{X}$ forms a closed path with a path $\alpha$ in $N(\overline{A})$. Let $D$ be a minimal complexity disc diagram with $\partial_\p D=\alpha\gamma$ among all such $\gamma$. Then $D$ cannot have spurs by the minimality of $|\gamma|$. As before, there are no consecutive edges forming cornsquares in $\alpha$ or in $\gamma$. Since $d\geq|\gamma|$, there are no shells with outer path in $\gamma$.

If there is a shell $C$ with outer path $\alpha_{\ds C}=\alpha\cap \partial C$, then as before the inner path $\delta$ is path-homotopic in $\overline{Y}_i$ to a path $\alpha'_C$ in $N(\overline{A}_i)$. Then as in Figure~\ref{fig:Extra2}, we could replace $\alpha_{\ds C}$ by $\alpha'_C$ and replace $C$ by a square diagram. This contradicts the minimal complexity of $D$.

By Theorem~\ref{thm:ladder}, the disc diagram $D$ is a single cone-cell or a ladder and for any of its cone-cells $C$
the hyperplane $\widetilde{A}$ intersects $\widetilde{Y}_i$ containing the lift of $\alpha_{\ds C}$ to $N(\widetilde{A})$. We have $|\lambda_{\ds C}|_{\overline{Y}_i}+|\gamma_{\ds C}|+|\delta_{\ds C}|_{\overline{Y}_i}\leq 3d$, while the injectivity radius in $\overline{Y}_i$ of the projection of the hyperplane $\widetilde{A}_i=\widetilde{A}\cap\widetilde{Y}_i$ is $>3d$. Then we can replace $\alpha_{\ds C}$ by $\alpha'_C$ and replace $C$ by a square diagram to contradict the minimal complexity of $D$.

Assertion (1) follows from the same proof as Assertion (3), where we consider all paths $\gamma$ of length $0$.
\end{proof}

The following clarifies and generalizes \cite[Thm 16.23]{Hier}.

\begin{lemma}
\label{lem:quotient_double_separable}
Let $\langle X|Y_i\rangle$ be a cubical presentation with all abstract pieces of uniformly bounded diameter.
Suppose that each $Y_i$ is virtually special with finitely many immersed codim--$2$--hyperplanes.
Let $H_1,H_2\subset G=\pi_1X$ be stabilizers of intersecting hyperplanes $\widetilde{A},\widetilde{B}\subset \widetilde{X}$ and let $g\in G-H_1H_2$. There are finite index subgroups $P_i'\subset P_i=\pi_1Y_i$ such that $\overline{g}\notin\overline{H}_1\overline{H}_2$ in the quotient $\overline{G}=G/\nclose{\{P_i'\}}$.
\end{lemma}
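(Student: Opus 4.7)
The plan is to adapt the cubical small-cancellation proof of Lemma~\ref{lem:quasiconvex} to a three-part boundary, replacing (single) hyperplane injectivity radius by the double injectivity radius from Lemma~\ref{lem:special->double_inj_radius}. First I would pick a vertex $v$ projecting from $N(\widetilde{A})\cap N(\widetilde{B})$ and a closed path $\gamma$ at $v$ in $X$ of minimum length representing $g$; choose $d$ exceeding both $|\gamma|$ and $\frac{1}{12}\systole{Y_i}$ for every $i$, so that every piece in the presentation $\langle X\mid Y_i\rangle$ has diameter $<d$. Using Lemmas~\ref{lem:special_resfin}, \ref{lem:special->inj_rad}, and~\ref{lem:special->double_inj_radius}, I would pick characteristic finite-index subgroups $P_i'\subset P_i$ such that (i) $\systole{Y_i'}\geq 12d$ for $Y_i'=P_i'\backslash\widetilde{Y}_i$, keeping $\langle X\mid Y_i'\rangle$ a $C'(\tfrac{1}{12})$ presentation; (ii) every hyperplane of $Y_i'$ has injectivity radius $>3d$; (iii) every pair of intersecting hyperplanes of $Y_i'$ has double injectivity radius $>3d$ at the full preimage of their intersection.

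Next, assuming for contradiction that $\overline{g}\in\overline{H}_1\overline{H}_2$, I would fix representatives $h_j\in H_j$ with $\overline{g}=\overline{h}_1\overline{h}_2$ and closed paths $\alpha\subset N(A)$, $\beta\subset N(B)$ at $v$ representing them. The loop $\gamma\beta^{-1}\alpha^{-1}$ then lies in $\nclose{\{P_i'\}}$, lifts to a closed loop at $\overline{v}$ in $\overline{X}$, and bounds a disc diagram $D\to\widetilde{X}^*$; take $D$ of minimal complexity over all admissible pairs $(\alpha,\beta)$, and apply Theorem~\ref{thm:ladder}. If $D$ is a single vertex then $\gamma\simeq\alpha\beta$ in $\widetilde{X}$, giving $g=h_1h_2\in H_1H_2$, contradiction. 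If $D$ is a single cone-cell in some $Y_i'$ then $\alpha,\beta,\gamma$ all lift into $\widetilde{Y}_i\subset\widetilde{X}$; the lifted $\gamma$ is a path of length $\leq d$ from a vertex of $N(\widetilde{A})\cap\widetilde{Y}_i$ to one of $N(h_1\widetilde{B})\cap\widetilde{Y}_i$, and since $\widetilde{A}\cap h_1\widetilde{B}$ is a nonempty codim-$2$--hyperplane meeting $\widetilde{Y}_i$, the double injectivity radius hypothesis lets me path-homotope $\gamma$ in $\widetilde{Y}_i$ to a concatenation $\alpha''\beta''$ through a vertex of that intersection, with $\alpha''\subset N(\widetilde{A})$ and $\beta''\subset N(h_1\widetilde{B})$. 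This again forces $g\in H_1H_2$, a contradiction.

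Finally, in the remaining cases ($D$ a ladder or with at least three spurs/shells/cornsquares), I would rule out every feature in the interior of $\alpha$, $\beta$ or $\gamma$ exactly as in Lemma~\ref{lem:quasiconvex}: an interior shell on $\gamma$ contradicts its geodesy, while interior shells on $\alpha$ (respectively $\beta$) get absorbed into square subdiagrams through the single injectivity radius of the relevant hyperplane in $Y_i'$, each absorption reducing the number of cone-cells and contradicting minimality; spurs and cornsquares interior to a single part are pushed out or reduced. The genuinely new ingredient, and the main obstacle, is handling cone-cells whose outer paths straddle the corner of $\alpha$ and $\beta$: these outer paths go from $N(\overline{A})$ to $N(\overline{B})$, and their inner paths are concatenations of at most six pieces of total length $\leq 6d$, so the double injectivity radius of Lemma~\ref{lem:special->double_inj_radius} applied to the relevant pair of intersecting hyperplanes in $Y_i'$ lets me replace the inner path by a concatenation at a vertex of their intersection and thus absorb the cone-cell into a square subdiagram, reducing complexity. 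Corner cone-cells at the other two corners (between $\gamma$ and $\alpha$, or $\gamma$ and $\beta$) are absorbed analogously using single injectivity radius together with geodesity of $\gamma$; and since a ladder is a linear concatenation whose extremal cone-cells fall into exactly these corner situations, the ladder case reduces to the same absorptions. Every case then contradicts the assumption, so $\overline{g}\notin\overline{H}_1\overline{H}_2$.
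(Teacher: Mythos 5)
Your proposal follows essentially the same strategy as the paper: set up a disc diagram bounded by $\alpha\gamma\beta^{-1}$ concatenated at a vertex over $N(Q)$, rule out positive-curvature features along each of the three arcs and along the two $\gamma$-corners, and then use the double injectivity radius of Lemma~\ref{lem:special->double_inj_radius} to absorb the remaining shell straddling the $\alpha$-$\beta$ corner into a square diagram, contradicting minimality. The identification of the double injectivity radius as the key new ingredient is exactly right, and your explicit handling of the single cone-cell case (which the paper treats implicitly) is a fine clarification.

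One quantitative slip: you ask for single hyperplane injectivity radius $>3d$, the same bound used in Lemma~\ref{lem:quasiconvex}, but that bound is too small here. For a shell whose outer path straddles $\alpha$ and $\gamma$ (or $\gamma$ and $\beta$), the path returning into $N(\overline{A}_i)$ that you wish to absorb is the concatenation of the inner path (length $<6d$, being $\leq 6$ pieces) with the $\gamma$-portion (length $<d$), totalling $<7d$; injectivity radius $>3d$ only absorbs paths of length $\leq 6d$. The paper therefore requests injectivity radius $>4d$ to comfortably cover this case. With that adjustment your argument goes through. A second, smaller point: you should add, as the paper does, that among minimal-complexity diagrams you take one with a minimal number of spurs, which ensures $\alpha$ and $\beta$ do not share their first edge, so the corner at $\overline{v}$ is genuinely a corner and the shell there has a nondegenerate $\alpha_C$ and $\beta_C$.
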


\begin{proof}
Assume that all the abstract pieces have diameter $<d$.
Let $\widetilde{Q}=\widetilde{A}\cap \widetilde{B}$ and let $A,B,Q$ be the immersed hyperplanes and codim--$2$--hyperplane in $X$ that are the projections of $\widetilde A,\widetilde B,\widetilde Q$. Let $\gamma\rightarrow X$ be a minimal length path starting at $N(A)$ and ending at $N(B)$ such that its concatenation with a pair of paths in $N(A),N(B)$ concatenated at $N(Q)$ represents the conjugacy class of $g$. We increase $d$ so that $d\geq|\gamma|$. Let $P_i^{\mathrm{sp}}\subset P_i$ be finite index special subgroups.

By Lemmas~\ref{lem:special_resfin},~\ref{lem:special->inj_rad}, and~\ref{lem:special->double_inj_radius} we can choose finite index subgroups $P_i'\subset P_i^{\mathrm{sp}}$ that are characteristic in $P_i$, and such that $\overline{Y}_i=P'_i \backslash \widetilde{Y}_i$ satisfy:
\begin{itemize}
\item
$\systole{\overline{Y}_i}\geq 12d$,
\item
all hyperplanes in $\overline{Y}_i$ have injectivity radius $>4d$, and
\item
all pairs $\overline{A}_i,\overline{B}_i$ of hyperplanes in $\overline{Y}_i$ intersecting at a codim--$2$--hyperplane $\overline{Q}_i$ have double injectivity radius $>3d$ at the family of components of $\overline{A}_i\cap\overline{B}_i$ in the $P_i^{\mathrm{sp}}/{P'_i}$ orbit of $\overline{Q}_i$.
\end{itemize}

The reason we used the $P_i^{\mathrm{sp}}/{P'_i}$ orbit instead of the entire $P_i/{P_i'}$ orbit is the following.
Since $P_i^{\mathrm{sp}}$ is special, an element $p\in P_i^{\mathrm{sp}}/{P'_i}$ cannot map $\overline{A}_i$ to a distinct hyperplane intersecting $\overline{A}_i$.
Hence if $p\in P_i^{\mathrm{sp}}/{P'_i}$ maps a component of $\overline{A}_i\cap\overline{B}_i$ to a component of $\overline{A}_i\cap\overline{B}_i$, then it cannot interchange $\overline{A}_i$ and $\overline{B}_i$ and so it stabilizes $\overline{A}_i$ and $\overline{B}_i$.

Let $\overline{X}= \nclose{\{P_i'\}}\backslash\widetilde{X}$. Let $\overline{A},\overline{B},\overline{Q}$ be the hyperplanes and codim--$2$--hyperplane in $\overline{X}$ that are the projections of $\widetilde A,\widetilde B,\widetilde Q$.

We now argue by contradiction to prove the lemma. If $\overline{g}$ lies in $\overline{H}_1\overline{H}_2$, then there is a disc diagram $D\rightarrow\widetilde{X}^*$ bounded by a closed path $\alpha\gamma\beta^{-1}$, where $\alpha, \beta^{-1}$ are paths in $N(\overline{A}),N(\overline{B})$ concatenated at a vertex $\overline{v}\in N(\overline{Q})$, and we lift $\gamma$ to $\overline{X}$. Assume that $D$ has minimal complexity among all such diagrams and $\gamma$. By minimality of $|\gamma|$ the diagram $D$ has no spurs except possibly where $\alpha$ and $\beta$ are concatenated. By replacing $\overline{v}$ we can remove such spurs and assume that $D$ is spurless. The diagram $D$ also cannot have two consecutive edges of $\alpha, \beta$ or $\gamma$ forming cornsquares.

An outer path of a shell $C$ cannot be contained entirely in $\alpha, \beta$ or $\gamma$, as in the proof of Lemma~\ref{lem:quasiconvex}. We now prove that the outer path of a shell $C$ with $\partial C$ mapping to $\overline{Y}_i$ cannot be contained in $\alpha\gamma$ (or $\gamma\beta^{-1}$). Otherwise, recall that $|\gamma|\leq d$ and the length of a geodesic that is path-homotopic to the inner path of $C$ is $<6d$, hence if $\alpha_{\ds C}=\partial C\cap\alpha$ is a piece, then this contradicts $|\partial C|_{\overline{Y}_i}\geq 12d$. If $\alpha_{\ds C}$ is not a piece, then since the hyperplane injectivity radius in $\overline{Y}_i$ is $>4d$, we could replace $\alpha_{\ds C}$ by $\alpha'_C$ and replace $C$ by a square diagram, contradicting minimal complexity.

Since $\langle X|\overline{Y}_i\rangle$ satisfies the $C'(\frac{1}{12})$ small cancellation condition, by Theorem~\ref{thm:ladder} the disc diagram $D$ is either:
\begin{enumerate}[(a)]
\item
a single cone-cell $C$, or
\item
a ladder with a shell $C$ containing $\overline{v}$, or
\item
a diagram with 2 cornsquares located where $\gamma$ is concatenated with $\alpha$ and $\beta$, and with a shell $C$ containing $\overline{v}$ as in Figure~\ref{fig:TheFish}.
\end{enumerate}
In each case there is a $2$-cell $C$ containing $\overline{v}$. Let $\alpha_{\ds C}$ and $\beta_{\ds C}$ denote the subpaths $\alpha\cap \partial C$ and $\beta\cap \partial C$. The complement $\delta$ in $\partial C$ of $\beta_C^{-1}\alpha_{\ds C}$ either coincides with $\gamma$ in Case~(a), or is a piece in Case~(b) or is an inner path of a shell, hence a concatenation of at most $6$ pieces in Case~(c). In each case we have $|\delta|_{\overline{Y}_i}<6d$, where $\partial C$ maps to $\overline{Y}_i$.

Let $\widetilde{v}\in N(\widetilde{Q})$ be a lift of $\overline{v}$. Let $\widetilde{Y}_i$ be the elevation of $Y_i$ containing $\widetilde{v}$. Let $\widetilde{A}_i= \widetilde{A}\cap\widetilde{Y}_i,\ \widetilde{B}_i= \widetilde{B}\cap\widetilde{Y}_i$. If both $\widetilde{A}_i,\widetilde{B}_i$ are empty, then both $\alpha_{\ds C},\beta_{\ds C}$ are pieces which contradicts $|\partial C|_{\overline{Y}_i}>12d$. If exactly one of $\widetilde{A}_i,\widetilde{B}_i$ is empty, say $\widetilde{B}_i$, then $\beta_{\ds C}$ is a piece. Since the injectivity radius of $\widetilde{A}_i$ is $>4d$, as before we could replace $\alpha_{\ds C}$ by $\alpha'_C$ and replace $C$ by a square diagram, contradicting minimal complexity. Hence both $\widetilde{A}_i,\widetilde{B}_i$ are nonempty and $D$ shows that they intersect in nonempty $\widetilde{Q}_i=\widetilde{Q}\cap \widetilde{Y}_i$.

\begin{figure}
\begin{center}
\includegraphics[width=\textwidth]{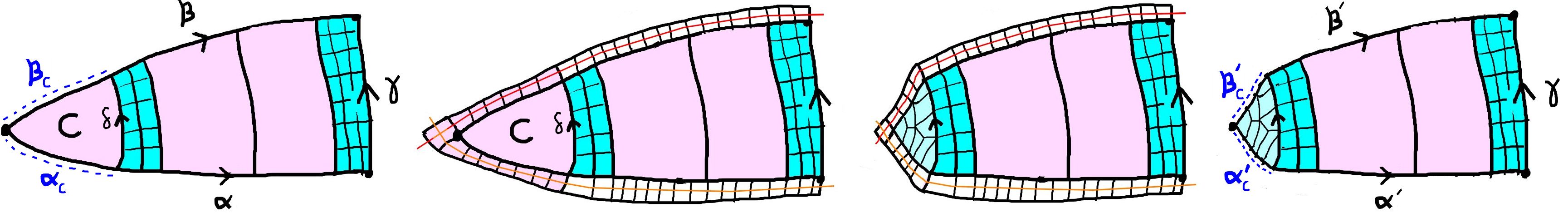}
\end{center}
\caption{The shell $C$ in the first diagram
is surrounded by two hyperplanes and a short inner path $\delta$ as in the second
diagram. We can thus replace $C$ by a square diagram bounded by $\alpha'_C \delta \beta'^{-1}_C$,
to obtain a smaller complexity diagram on the right.}
\label{fig:TheFish}
\end{figure}

Let $\overline{A}_i,\overline{B}_i,\overline{Q}_i\subset \overline{Y}_i$ denote the projections of $\widetilde{A}_i,\widetilde{B}_i,\widetilde{Q}_i$.
The double injectivity radius in $\overline{Y}_i$ is $>3d$. Hence $\delta$ is homotopic in $\overline{Y}_i$ to a concatenation at $pN(\overline{Q}_i)$ of paths $\alpha'_C,\beta'_C$ in $N(\overline{A}_i), N(\overline{B}_i)$ for some $p\in P^{\mathrm{sp}}_i/{P_i'}$.
Thus there is in $\overline{Y}_i$ a square diagram with boundary $\alpha'_C \delta {\beta'}_C^{-1}$.
We replace $C$ by this square diagram, and replace the subpath $\alpha_{\ds C}$
of $\alpha$ by $\alpha'_C$ to obtain $\alpha'$, and similarly we replace the subpath $\beta_{\ds C}$
of $\beta$ by $\beta'_C$ to obtain $\beta'$.
Since $p\in P^{\mathrm{sp}}_i/{P_i'}$, we have $pN(\overline{A}_i)=N(\overline{A}_i)$ and $pN(\overline{B}_i)=N(\overline{B}_i)$.
Translating the whole diagram by $p^{-1}$ yields a disc diagram bounded by $p^{-1}(\alpha')p^{-1}(\gamma)p^{-1}(\beta'^{-1})$, where $p^{-1}(\alpha'),p^{-1}(\beta'^{-1})$ are paths in $N(\overline{A}),N(\overline{B})$ concatenated at $N(\overline{Q})$. This diagram has a smaller number of cone-cells than $D$, which contradicts the minimal complexity assumption. See Figure~\ref{fig:TheFish}.
\end{proof}

\section{Specialization}
\label{sec:special}
In this section we prove Theorem~\ref{thm:specialization} (Specialization).

\begin{proof}[Proof of Theorem~\ref{thm:specialization}]
To prove that the action of $G$ on $\widetilde{X}$ is virtually special, we will verify the conditions of Criterion~\ref{thm:HWcriterion}. Freeness and finiteness Conditions (1)--(3) of Criterion~\ref{thm:HWcriterion} are Hypothesis~(i) of Theorem~\ref{thm:specialization}. We now verify Condition~(4). Let $H$ be the stabilizer of a hyperplane $\widetilde{A}\subset\widetilde{X}$. Let $g\in G-H$. We will find finite index subgroups $P'_i\subset P_i$ such that:

\begin{enumerate}[(a)]
\item
$\overline{G}=G/\nclose{\{P_i'\}}$ is hyperbolic and virtually compact special,
\item
the image $\overline{H}$ is quasiconvex in $\overline{G}$,
\item
$\overline{g}\notin\overline{H}$.
\end{enumerate}
The result then follows from separability of quasiconvex subgroups in hyperbolic virtually compact special groups \cite[Thm 7.3]{HW}.

By Hypothesis~(iv) and Theorem~\ref{thm:relative special qoutient}, there are $E^\circ_n\subset E_n$ such that $P'_i\cap E_n\ \subset E^\circ_n$ implies that $\overline{G}$ splits as a graph of hyperbolic virtually compact special groups with finite edge groups. Then $\overline{G}$ is hyperbolic virtually compact special and Condition~(a) is satisfied. By Hypothesis~(ii), there are indeed finite index subgroups $P'_i\subset P_i$ satisfying $P'_i\cap E_n\ \subset E^\circ_n$.

To arrange Conditions (b)~and~(c) we apply cubical small cancelation theory. Consider the cubical presentation
$\langle X|Y_i\rangle$. By Hypothesis~(iii), the complexes $Y_i$ are virtually special and have finitely many immersed codim--$2$--hyperplanes.
Since $\widetilde{Y}_i$ are superconvex, there is a uniform bound on the diameters of abstract wall-pieces. Let $K\subset \widetilde X$ be the compact subcomplex from the definition of relative cocompactness. If $j\neq i$ or $g'\neq P_i$, then $g'\widetilde Y_i\cap \widetilde Y_j\subset GK$. Since $G$ is relatively hyperbolic, the intersections $g'P_ig'^{-1}\cap P_j$ are finite and hence there is a uniform bound on the diameters of abstract cone-pieces $g'\widetilde Y_i\cap \widetilde Y_j$.

We can thus apply Lemma~\ref{lem:quasiconvex} and replace $P_i'$ by further finite index subgroups satisfying its conclusion. Condition~(c) follows directly from Lemma~\ref{lem:quasiconvex}(3). Let $\overline{A}$ be the hyperplane in $\overline{X}$ that is the projection of $\widetilde{A}$, as in Lemma~\ref{lem:quasiconvex}(1).
The group $\overline{G}$ acts cocompactly on $\overline{X}_c=\nclose{\{P_i'\}}\backslash GK$. We can assume that $\overline{X}_c$ is connected and contains an edge dual to $\overline{A}$. We will prove that $\overline{A}\cap\overline{X}_c$ is quasiconvex in $\overline{X}_c$, which means that its stabilizer $\overline{H}$ is quasiconvex in $\overline{G}$, giving Condition~(b). By Lemma~\ref{lem:quasiconvex}(2), any two points of $N(\overline{A})\cap\overline{X}_c$ are connected in $\overline{X}$
by a geodesic $\gamma$ that lies in the union of $N(\overline{A})$ and the translates of $\overline{Y}_i$ intersecting $\overline{A}$. Every component of $\gamma-\overline{X}_c$ is contained in some translate of the closure $Z_i$ of $\overline{Y}_i-\overline{X}_c$. By the last part of the definition of relative cocompactness in Section~\ref{sec:div}, the group $P_i$ acts cocompactly on $\widetilde{Y}_i\cap GK$. Thus, since $P_i'$ is of finite index in $P_i$, the intersection $Z_i\cap \overline{X}_c$ is compact. Hence we can form a quasigeodesic $\gamma_c$ by replacing in $\gamma$ each component of $\gamma-\overline{X}_c$ in a translate $g'Z_i$ by a path of uniformly bounded length in $g'\overline{Y}_i\cap \overline{X}_c$. The quasigeodesic $\gamma_c$ is contained in the union of $N(\overline{A})$ and the translates of $\overline{Y}_i\cap \overline{X}_c$ intersecting $\overline{A}$. Since $\overline{Y}_i\cap \overline{X}_c$ are uniformly bounded, $\gamma_c$ is at uniform distance from $\overline{A}$, as desired. This completes the proof of Condition~(4) of Criterion~\ref{thm:HWcriterion}.

To prove Condition~(5) we need to replace (c) by
\begin{itemize}
\item[\noindent(c$'$)]
$\overline{g}\notin\overline{H}_1\overline{H}_2,$
\end{itemize}
where $H_1,H_2$ are the stabilizers of intersecting hyperplanes $\widetilde{A}_1,\widetilde{A}_2\subset \widetilde{X}$ and $g\in G-H_1H_2$. It suffices to consider $P_i'$ provided by Lemma~\ref{lem:quotient_double_separable}. Once we have (a),~(b)~and~(c$'$) we appeal to \cite[Thm 1.1]{Min}, which says that in hyperbolic groups with separable quasiconvex subgroups, double cosets of quasiconvex subgroups are separable as well.
\end{proof}

\begin{bibdiv}
\begin{biblist}
\scriptsize

\bib{Atam}{article}{
   author={Agol, Ian},
   title={Tameness of hyperbolic 3-manifolds},
   date={2004},
   eprint={arXix:GT/0405568}}

\bib{A}{article}{
   author={Agol, Ian},
   title={Criteria for virtual fibering},
   journal={J. Topol.},
   volume={1},
   date={2008},
   number={2},
   pages={269--284}}

\bib{Ahak}{article}{
   author={Agol, Ian},
   title={The virtual Haken conjecture}
   contribution={ type= {an Appendix}
                  author={Agol, Ian}
                  author={Groves, Daniel}
                  author={Manning, Jason}}
   status={preprint},
   date={2012},
   eprint={arXiv:1204.2810}}

\bib{AFW}{article}{
   title={$3$--manifold groups}
   author={Aschenbrenner, Matthias}
   author={Friedl, Stefan}
   author={Wilton, Henry}
   date={2012}
   status={preprint}
   eprint={arXiv:1205.0202}}

\bib{BW}{article}{
   author={Bigdely, Hadi},
   author={Wise, Daniel T.},
   title={Quasiconvexity and relatively hyperbolic groups that split},
   date={2012}
   journal={Michigan Math. J.},
   status={to appear}
   eprint={arXiv:1211.1993}
}

\bib{BonT}{article}{
   author={Bonahon, Francis},
   title={Bouts des vari\'et\'es hyperboliques de dimension $3$},
   language={French},
   journal={Ann. of Math. (2)},
   volume={124},
   date={1986},
   number={1},
   pages={71--158}}

\bib{Bon}{article}{
   author={Bonahon, Francis},
   title={Geometric structures on 3-manifolds},
   conference={
      title={Handbook of geometric topology},
   },
   book={
      publisher={North-Holland},
      place={Amsterdam},
   },
   date={2002},
   pages={93--164}}

\bib{Bri}{article}{
  title={On the subgroups of semihyperbolic groups}
  journal={Monographie de L'Enseignement Math\'ematique}
  author={Bridson, Martin R.}
  volume={38}
  date={2001}
  pages={85--111}}

\bib{CG}{article}{
   author={Calegari, Danny},
   author={Gabai, David},
   title={Shrinkwrapping and the taming of hyperbolic 3-manifolds},
   journal={J. Amer. Math. Soc.},
   volume={19},
   date={2006},
   number={2},
   pages={385--446}}

\bib{Can}{article}{
   author={Canary, Richard D.},
   title={A covering theorem for hyperbolic $3$-manifolds and its
   applications},
   journal={Topology},
   volume={35},
   date={1996},
   number={3},
   pages={751--778}}

\bib{CLR}{article}{
   author={Cooper, D.},
   author={Long, D. D.},
   author={Reid, A. W.},
   title={Essential closed surfaces in bounded $3$-manifolds},
   journal={J. Amer. Math. Soc.},
   volume={10},
   date={1997},
   number={3},
   pages={553--563}}

\bib{DJ}{article}{
   author={Davis, Michael W.},
   author={Januszkiewicz, Tadeusz},
   title={Right-angled Artin groups are commensurable with right-angled
   Coxeter groups},
   journal={J. Pure Appl. Algebra},
   volume={153},
   date={2000},
   number={3},
   pages={229--235}}

\bib{HW}{article}{
   author={Haglund, Fr{\'e}d{\'e}ric},
   author={Wise, Daniel T.},
   title={Special cube complexes},
   journal={Geom. Funct. Anal.},
   volume={17},
   date={2008},
   number={5},
   pages={1551--1620}}

\bib{HW2}{article}{
   author={Haglund, Fr{\'e}d{\'e}ric},
   author={Wise, Daniel T.},
   title={Coxeter groups are virtually special},
   journal={Adv. Math.},
   volume={224},
   date={2010},
   number={5},
   pages={1890--1903}}

\bib{Ham}{article}{
   author={Hamilton, Emily},
   title={Abelian subgroup separability of Haken 3-manifolds and closed
   hyperbolic $n$-orbifolds},
   journal={Proc. London Math. Soc. (3)},
   volume={83},
   date={2001},
   number={3},
   pages={626--646}}

\bib{HruW}{article}{
   title= {Finiteness properties of cubulated groups}
   author={Hruska, Christopher}
   author={Wise, Daniel T.}
   status={to appear}
   journal={Compos. Math.}
   eprint={http://www.math.mcgill.ca/wise/papers.html}
   date={2013}}

\bib{HsuW}{article}{
   author={Hsu, Tim},
   author={Wise, Daniel T.},
   title={On linear and residual properties of graph products},
   journal={Michigan Math. J.},
   volume={46},
   date={1999},
   number={2},
   pages={251--259}}

\bib{KMr}{article}{
   author={Kronheimer, P. B.},
   author={Mrowka, T. S.},
   title={Dehn surgery, the fundamental group and SU$(2)$},
   journal={Math. Res. Lett.},
   volume={11},
   date={2004},
   number={5-6},
   pages={741--754}}

\bib{Leeb}{article}{
   author={Leeb, Bernhard},
   title={$3$-manifolds with(out) metrics of nonpositive curvature},
   journal={Invent. Math.},
   volume={122},
   date={1995},
   number={2},
   pages={277--289}}

\bib{Liu}{article}{
          author={Liu, Yi},
          date={2011},
          title={Virtual cubulation of nonpositively curved graph manifolds}
          journal={Journal of Topology}
          status={to appear}
          eprint={arXiv:1110.1940}}

\bib{MP}{article}{
   author={Mart{\'{\i}}nez-Pedroza, Eduardo},
   title={Combination of quasiconvex subgroups of relatively hyperbolic
   groups},
   journal={Groups Geom. Dyn.},
   volume={3},
   date={2009},
   number={2},
   pages={317--342}}

\bib{Min}{article}{
   author={Minasyan, Ashot},
   title={Separable subsets of GFERF negatively curved groups},
   journal={J. Algebra},
   volume={304},
   date={2006},
   number={2},
   pages={1090--1100}}

\bib{PW}{article}{
 title={Graph manifolds with boundary are virtually special}
 author={Przytycki,Piotr}
 author={Wise, Daniel T.}
 journal={Journal of Topology}
 status={to appear}
 eprint={arXiv:1110.3513}
 date={2011}}

\bib{Rol}{article}{
 title={Poc sets, median algebras, and group actions. Am extended study of Dunwoody's construction and Sageev's theorem.}
 author={Roller, Martin}
 date={1998}
 eprint={www.personal.soton.ac.uk/gan/Roller.pdf}}

\bib{RW}{article}{
   author={Rubinstein, J. Hyam},
   author={Wang, Shicheng},
   title={$\pi_1$-injective surfaces in graph manifolds},
   journal={Comment. Math. Helv.},
   volume={73},
   date={1998},
   number={4},
   pages={499--515},
}

\bib{Thu}{article}{
   author={Thurston, William P.}
   title={The Geometry and Topology of Three-Manifolds}
   status={Princeton University course notes}
   eprint={http://www.msri.org/publications/books/gt3m/}
   date={1980}}

\bib{Thnorm}{article}{
   author={Thurston, William P.},
   title={A norm for the homology of $3$-manifolds},
   journal={Mem. Amer. Math. Soc.},
   volume={59},
   date={1986},
   number={339},
   pages={i--vi and 99--130}}

\bib{Wen}{article}{
   author={Yang, Wen-Yuan},
   title={Combination of fully quasi-convex subgroups and its applications},
   date={2012},
   eprint={arXiv:1205.2994}}

\bib{Hier}{article}{
   author={Wise, Daniel T.},
   title={The structure of groups with quasiconvex hierarchy},
   date={2011},
   eprint={http://www.math.mcgill.ca/wise/papers.html}}

\bib{Riches}{book}{
   author={Wise, Daniel T.},
   title={From riches to raags: 3-manifolds, right-angled Artin groups, and
   cubical geometry},
   series={CBMS Regional Conference Series in Mathematics},
   volume={117},
   publisher={Published for the Conference Board of the Mathematical
   Sciences, Washington, DC},
   date={2012}}

\end{biblist}
\end{bibdiv}

\end{document}